\newtheorem{theorem}{Theorem}[section]
\newtheorem{lemma}[theorem]{Lemma}
\newtheorem{proposition}[theorem]{Proposition}
\newtheorem{corollary}[theorem]{Corollary}
\theoremstyle{remark}
\newtheorem{remark}[theorem]{Remark}
\theoremstyle{definition}
\newcommand{\sd}{\Sigma \Delta}
\newcommand{\R}{\mathbb{R}}
\newcommand{\C}{\mathbb{C}}
\newcommand{\Z}{\mathbb{Z}}
\newcommand{\one}{\scalebox{1}{$\mathbbm{1}$}}
\begin{document}
\title{Memoryless scalar quantization for random frames}
\author{Kateryna Melnykova, \"{O}zg\"{u}r Yilmaz}
\newcommand{\OYnote}[1]{\textcolor{red}{[{\em {\bf **OY Note:} #1}]}}
\newcommand{\KMnote}[1]{\textcolor{blue}{[{\em {\bf **KM Note:} #1}]}}
\newcommand{\OYnotee}[1]{\textcolor{cyan}{[{\em {\bf **OY NoteNew:} #1}]}}
\newcommand{\KMnotee}[1]{\textcolor{brown}{[{\em {\bf **KM NoteNew:} #1}]}}
\newcommand{\OYnotef}[1]{\textcolor{magenta}{[{\em {\bf **OY NoteNew2:} #1}]}}

\begin{abstract}
Memoryless scalar quantization (MSQ) is a common technique to quantize frame coefficients of signals (which are used as a model for generalized linear samples), making them compatible with our digital technology. The process of quantization is generally not invertible, and thus one can only recover an approximation to the original signal from its quantized coefficients. The non-linear nature of quantization makes the analysis of the corresponding approximation error challenging, often resulting in the use of a simplifying assumption, called the ``white noise hypothesis'' (WNH) that simplifies this analysis. However, the WNH is known to be not rigorous and, at least in certain cases, not valid.

Given a fixed, deterministic signal,  we assume that we use a random frame, whose analysis matrix has independent isotropic sub-Gaussian rows, to collect the measurements, which are consecutively quantized via MSQ. For this setting, numerically observed decay rate seems to agree with the prediction by the WNH. We rigorously establish sharp non-asymptotic error bounds without using the WNH that explain the observed decay rate. Furthermore, we show that the reconstruction error does not necessarily diminish as redundancy increases. Specifically, for Gaussian random frames, we provide sharp non-asymptotic upper and lower error bounds; these bounds asymptotically converge to (small) non-zero constant. For other random frames, we show that the error bound does not diminish to zero asymptotically (under the condition that the dimension of the signal goes to infinity as well).

We also extend this approach to the compressed sensing setting, obtaining rigorous error bounds that agree with empirical observations, again, without resorting to the WNH. 
\end{abstract}
\maketitle
\section{Introduction}
We consider the problem of quantizing generalized linear measurements of finite dimensional signals, both in the overdetermined (frame theory) setting and in the underdetermined (compressed sensing) setting. We focus on memoryless scalar quantization (MSQ) which is arguably the most intuitive quantization strategy and can be easily implemented in practice. To be more specific, let $q_\delta:\R\mapsto \delta\Z$ be the {\it uniform scalar quantizer with stepsize $\delta>0$}, where $q_\delta(x):=n\delta$ if $x\in (n\delta-\delta/2,n\delta+\delta/2]$. The associated \emph{uniform MSQ with stepsize $\delta$} is $Q^{\rm MSQ}_\delta:\R^m\mapsto  \left(\delta\Z\right)^m$ such that 
the $j$th component of $Q^{\rm MSQ}_\delta(u)$ satisfies $(Q^{\rm MSQ}_\delta(u))_j=q_\delta(u_j)$. When there is no ambiguity we shall use $Q$ to denote $Q^{\rm MSQ}_\delta$. 

Suppose that the signal to be acquired is $x\in\mathbb{R}^k$, and let us restrict our attention to the frame theory setting. Accordingly, let $E\in\mathbb{R}^{m\times k}$ be (the analysis matrix of) a frame for $\R^k$ (where $m\ge k$) and denote the generalized samples, i.e., the frame coefficients, of $x$ by $y=Ex$. Since $E$ is a frame, $x$ can be recovered exactly from $y$ via ${\widetilde{E}y=\widetilde{E}Ex}=x$, where $\widetilde{E}$ is any left inverse, i.e., dual, of $E$. On the other hand, if the frame coefficients are quantized, say using $Q=Q^{\rm MSQ}_\delta$, then the recovery is, in general, inexact and results in an approximation $\widetilde{E}Q(Ex)$ with non-zero \emph{reconstruction error} $\|x-\widetilde{E}Q(Ex)\|$.

The non-linear nature of the quantizer $Q$ makes the analysis of the reconstruction error challenging. Accordingly, even though the error expression is deterministic once $x$ and $E$ are fixed, it is customary to rely on the so-called {\it white noise hypothesis (WNH)} for its analysis. The WNH, originally proposed by Bennett in \cite{BennettWNH} in the context of scalar quantization, asserts that, for a fixed deterministic measurement matrix $E$, if the signal $x$ is random, then the \emph{quantization error} ${Ex-Q(Ex)}$ is a random vector with independent, identically distributed (i.i.d.) entries that are uniformly distributed over $(-\delta/2,\delta/2]$. Unfortunately, the WNH is not rigorous and does not hold, at least in certain cases, e.g., \cite{Viswanathan,Jimenez, benedetto2006sigma}.

In this paper, we consider the reconstruction error when the generalized samples of a fixed, deterministic signal $x$ are obtained via a sub-Gaussian random frame $E$. (This choice is motivated by the use of such random matrices in compressed sensing.) While an appropriately modified WNH simplifies the error analysis, as we show in Section \ref{sec:MSQFramesReview}, this hypothesis is also non-rigorous and provably not valid, for example, when $E$ is a Gaussian random frame. Instead, we provide a rigorous methodology to establish rigorous error bounds in this setting. Furthermore, our approach can be generalized to compressed sensing setting, yielding error bounds that match with empirical observations.

Throughout, $\|\cdot\|$ refers to $\ell_2$-norm  and $C,c_1,c_2,...$ indicate constants. The Moore-Penrose pseudoinverse of a matrix $E$ with full column-rank is  denoted by $E^\dagger$ and given by $E^\dagger=(E^TE)^{-1}E^T$. For $x\in \R^N$, $x_T \in \R^k$ is the restriction of $x$ to $T$ when $T\subset\{1,2,...,N\}$ with $\# T=k$. For $\Phi\in \R^{m\times N}$, $\Phi_T$ denotes the $m\times k$ submatrix of $\Phi$ obtained by its restriction to the columns indexed by $T$. 

For a sub-Gaussian random variable $z$, $\|z\|_{\psi_2}=\sup_{p\geq 1}p^{1/2}\left(\mathbb{E}|z|^p\right)^{1/p}$ is the sub-Gaussian norm of $z$.  Note that $\|\cdot\|_{\psi_2}$ is a norm on the Banach space of sub-Gaussian random variables -- see, e.g., \cite{buldygin1980sub}.

A random vector $Y\in\mathbb{R}^n$ is called sub-Gaussian if the one-dimensional marginals $\langle Y, z\rangle$ are sub-Gaussian for all $z\in\mathbb{R}^n$. The sub-Gaussian norm of such a vector $Y$ is defined as
$$\|Y\|_{\psi_2}=\sup_{z\in\mathbb{S}^{n-1}}\|\langle Y,z\rangle\|_{\psi_2}.$$
For any sub-Gaussian  $Y=(y_1,y_2,...,y_n)$, $\|Y\|_{\psi_2}\geq\max_i\|y_i\|_{\psi_2}$ and $\|Y_T\|_{\psi_2}\leq\|Y\|_{\psi_2}$ for any $T\subseteq \{1,\dots,n\}$.

A random vector $Y\in\mathbb{R}^n$ is called isotropic if $\mathbb{E}\langle Y,z\rangle^2=\|z\|^2$ for all $z\in\mathbb{R}^n$. For example, random vectors whose entries are independent random variables with mean zero and variance one are isotropic. Note that if $Y$ is isotropic, so is $Y_T$ for any $T\subseteq \{1,\dots,n\}$.

This paper is organized as follows: Section \ref{sec:FrameTheory} reviews basic properties of frames. Section \ref{sec:quantization_and_wnh} gives a summary of the literature regarding the use of MSQ in the frame theory setting and explains how WNH helps, albeit in a non-rigorous fashion, with the analysis of the reconstruction error. After reviewing basics of compressed sensing and quantization in Section \ref{sec:CS_and_quantization}, we present our main contributions in frame quantization in Section \ref{sec:MSQFramesReview} and the extensions of these results to the compressed sensing setting in Section \ref{sec:result_in_CS}. Preliminary facts and lemmas are stated in Section \ref{sec:preliminaries}. Proofs of all results may be found in Section \ref{sec:proofs}. Finally, we present numerical experiments Section \ref{sec:numerical_experiments}.

\section{Background and setting}
\label{sec:Background_and_setting}
Our focus is on MSQ in the settings of frame theory and compressed sensing. Next, we provide a brief description of these problems, how they are related to each other, and the state-of-the-art. 
\subsection{Frame theory}
\label{sec:FrameTheory}
In various applications, signals can be modeled as elements in a (separable) Hilbert space $H$ such as $\R^d$, $\ell^2$, or $L^2(\R^d)$. Accordingly, any signal $x$ in $H$ can be represented discretely, for example, by choosing an orthonormal basis $\{b_n\}$ for $H$ and computing (or measuring) $x_n=\langle x,b_n \rangle$. Alternatively, one can use {\em frames}, which generalize the notion of bases by allowing ``controlled'' redundancy. Specifically, $\{e_n\} \subset H$ is a frame for $H$ if there exists positive constants $A\le B$, called the frame bounds, such that for all $x\in H$,
$$ A\|x\|^2 \le \sum|\langle x,e_n\rangle |^2 \le B\|x\|^2.
$$
If $\{e_n\}$ is a frame for $H$, there exist scalars $\{y_n\}$ such that $x=\sum y_n e_n$. If $\{e_n\}$ is overcomplete, i.e., the frame is redundant, the choice of  $y_n$ in this decomposition is not unique. This can be exploited in applications to reduce the error introduced by lossy operations like transmission over a noisy channel or quantization.

One common special case is $H=\R^k$, which is our main focus in this paper. Let $m\ge k$ be positive integers and let $\{e_j\}_1^m$ be a collection of vectors in $\R^k$. Denote by $E$ the $m \times k$ matrix whose $j$th row is $e_j^T$. Then $\{e_j\}_1^m$  is a frame for $\R^k$ if and only if $E$ is full rank. In this case, all $k$ singular values of $E$ are positive and the frame bounds $A$ and $B$ are the squares of the smallest and largest singular values of $E$, respectively. Any left inverse $\widetilde{E}$ of $E$ (whose columns also form a frame for $\R^k$) is said to be a dual of $E$; the Moore-Penrose pseudo-inverse is called the \emph{canonical dual}. Note that $E$ has infinitely many duals if $m>k$, in which case we say that $E$ is redundant. Throughout, we shall use $E$ to denote both the frame and the associated analysis matrix as defined above.

\subsection{MSQ for frames and the WNH}
\label{sec:quantization_and_wnh}
Suppose that our signals of interest are in $\R^k$, and let $E\in \R^{m\times k}$ be the analysis matrix of a frame for $\R^k$. The vector of frame coefficients of a signal $x$ is given by $y=Ex$, which we interpret as \emph{measurements} of $x$. Clearly, $x$ can be recovered from $y$ exactly by means of {\it linear reconstruction methods}, i.e., $x=Fy$ where $F$ is any left-inverse of $E$. However, when we have {\it quantized} measurements $q=Q(y)$, we can recover only an approximation $\widetilde{x}$ to $x$ and we wish the reconstruction error $\|x-\widetilde{x}\|$ (in an appropriate norm) to be as small as possible.  Such approximations can be obtained using consistent reconstruction methods, which are nearly optimal in terms of accuracy, but computationally expensive \cite{Goyal,cvetkovic2003resilience,cvetkovic2001simple,powell2016error}. For frames consisting of (nearly) equal-norm vectors, the lower bound for the reconstruction error (over all reconstruction methods) is $\Omega(\lambda^{-1})$ \cite{goyal1998quantized} where $\lambda:=m/k$. 

Alternatively, one can employ linear reconstruction techniques by using an appropriate left inverse of $E$. In what follows, we restrict our attention to a common approach in the literature where one uses linear reconstruction using the Moore-Penrose pseudo-inverse $E^\dagger$ of $E$ and measures the approximation error in $\ell_2$-norm. Note that among all left-inverses of $E$, $E^\dagger$ has the smallest operator norm. Thus, if the measurement error $y-Q(y)$ has no assumed structure\footnote{It is worth to note, though, that when $y-Q(y)$ is ``shaped'', other left inverses such as ``Sobolev duals'' may yield significantly better reconstructions, e.g., \cite{BLPY,Sobolev_Duals_for_RF,NewSigmaDelta,chou2015noise,Chou2016}.} (which can be motivated using WNH, e.g., see \cite{goyal2001quantized}), $E^\dagger$ minimizes the $\ell_2$ reconstruction error.

Next, set $\widetilde{x}=E^\dagger Q(y)=E^\dagger Q(Ex)$. The corresponding reconstruction error is 
\begin{equation} \label{rec_error}
\mathcal{E}(x):=\|x-\widetilde{x}\|=\|x-E^\dagger Q(Ex)\|=\|E^\dagger(Ex-Q(Ex))\|.
\end{equation}
The aim is to  identify the dependence of $\mathcal{E}(x)$ on $\delta$, $k$, and $m$ for a given infinite family of frames parametrized by $m$ and $k$. It is often of interest to understand the worst-case error over a compact set $K$ of signals, i.e., $\sup_{x\in K} \mathcal{E}(x)$, average $L_p$ error over such a set $K$, i.e., $\|\mathcal{E}\|_{L_p(K)}$, or the expected error when the signal is drawn randomly from $K$, i.e., $\mathbb{E} \ \mathcal{E}(x)$. The analysis of these error terms is not straight-forward, especially when $m>k$. Perhaps the most common simplifying approach is to use the WNH:
 {\it If $E$ is fixed and $x$ is random, the entries of the vector $Ex-Q(Ex)$ are assumed to be i.i.d. random variables, uniformly distributed in $(-\delta/2,\delta/2]$ (so, with mean 0 and variance $\delta^2/12$)} \cite{BennettWNH}, cf. \cite{gray1990quantization,Goyal,Jimenez}. 
Indeed, if we assume that the WNH holds, it is straight-forward to show that 
\begin{equation}\label{wnh111}
\mathbb{E}\ \mathcal{E}^2(x) =\|E^\dagger\|_F^2\delta^2/12,
\end{equation}
where $\|\cdot\|_F$ denotes the Frobenius norm. Finally, recall that ${\|E^\dagger\|_F\leq\sqrt{k}\|E^\dagger\|=\sqrt{k}\left(\sigma_{\min}(E)\right)^{-1}}$ and for a vast class of matrices (e.g., matrices with independent isotropic sub-Gaussian random rows), ${\sigma_{\min}(E)\geq C\sqrt{m}}$ (with high probability). For such matrices, the WNH allows us to conclude 
\begin{equation}
\label{eq:decay_predicted_by_WNH}
\mathbb{E}\mathcal{E}^2(x)\leq C\frac{k}{m}\delta^2
\end{equation}
where $C$ is independent of $m$ and $k$. The class of matrices for which \eqref{eq:decay_predicted_by_WNH} holds includes random partial Fourier matrices \cite{RudelsonRIPForIIDRandomMatrices} and all matrices with independent isotropic sub-Gaussian random rows, e.g., \cite[p. 232]{Vershbook}, including random matrices with i.i.d. standard Gaussian or Bernoulli entries. Note that \eqref{eq:decay_predicted_by_WNH} becomes an equality if $E$ is a unit-norm tight frame; therefore, for at least such frames, the error bound is sharp under the WNH.

The WNH is rather successful for predicting the reconstruction error associated with hard-to-analyze quantization schemes \cite{powell2013quantization,Xu, gray1990quantization, Jimenez} and can be (approximately) justified in special cases  (\cite{gray1990quantization,borodachov2009lattice,Jimenez}. 
Yet, the WNH is not rigorous and, at least in certain cases, not valid---see, e.g., \cite{Jimenez,Kushner,benedetto2006sigma,Xu}. To our knowledge, there are only a few results in the literature that provide a rigorous analysis of $\mathcal{E}(x)$ that is not based on the WNH. In \cite{Xu} and \cite{Tight_fr}, the authors consider MSQ with a fixed $\delta>0$ for quantizing expansions with respect to asymptotically equidistributed unit-norm tight frames for $\R^k$. Specifically, they investigate the decay of the reconstruction error $\mathcal{E}(x)$ as more measurements are taken, i.e., as $m$ increases. An analysis based on the WNH would suggest that the error should vanish as $m$ goes to infinity. However, \cite{Tight_fr} shows that for any given signal $x\in \R^k$, if the quantizer step size $\delta$ is sufficiently small (depending on $\|x\|$ and $k$, but independent from $m$), 
$$\lim_{m\rightarrow\infty}\mathcal{E}(x)\geq C\frac{\delta^{\frac{k+1}{2}}}{\|x\|^{\frac{k-1}{2}}},$$
where $C=C(k)>0$, implying that the error does not always tend to zero as $m\rightarrow\infty$. This shows that the WNH does not hold, at least in this particular setting.

While the WNH is not rigorous and not valid for all cases, its statement can be heuristically justified based on the asymptotic behavior of $Ex-Q(Ex)$ as $\delta$ approaches $0$. For example, consider a deterministic full rank matrix $E\in\mathbb{R}^{m\times k}$ and a random signal $x$ with absolutely continuous distribution of its entries with respect to the $k$-dimensional Lebesgue measure. Then, $\frac{1}{\delta}(Ex-Q(Ex))\rightarrow[-1/2,1/2)^m$ in distribution as $\delta\rightarrow 0^+$ \cite{Jimenez,borodachov2009lattice}. Furthermore, the entries of $\frac{1}{\delta}(Ex-Q(Ex))\rightarrow[-1/2,1/2)^m$ are asymptotically uncorrelated \cite{Viswanathan}. 

Note that the results above are asymptotic as $\delta\to 0$, and therefore, they can be applied for high-resolution settings where $\delta>0$ is extremely small. However, in practice, $\delta$ is often not very small -- sometimes as big as the infinity norm of the signal $x$ (corresponding to one-bit quantization). Thus, non-asymptotic reconstruction error analysis must be performed to understand the actual behavior of the reconstruction error.

\subsection{Compressed sensing and quantization}
\label{sec:CS_and_quantization}
While our main focus is on the use of MSQ for random frame expansions, a major application is MSQ for compressed sensing (CS) measurements. We explain this after briefly reviewing relevant facts in the theory of CS and quantization. 
\subsubsection{Basics of CS}
Motivated by the empirical observation that various classes of signals encountered in practice, such as audio and images, admit (nearly) sparse approximations using an appropriate basis or frame, CS has now been established as an effective sampling theory \cite{OriginCS1,OriginCS2,OriginCS3,candes2006compressive}. Let $x$ be in $\Sigma_k^N$, i.e.,  a $k$-sparse vector in $\mathbb{R}^N$. In CS, one collects non-adaptive linear measurements of $x$, given by $y=\Phi x$. It can be shown that if $\Phi\in\mathbb{R}^{m\times N}$, with $m\ll N$, is an appropriate matrix, e.g., if it satisfies the restricted isometry property (RIP) of order $k$ with a sufficiently small constant $\delta_k$, see, e.g., \cite{OriginCS1,OriginCS2,OriginCS3}, then the solution of the $\ell_1$-minimization problem given by
$$\text{minimize} \|z\|_1\,\text{subject to }\Phi z=y$$
yields $x\in \Sigma_k^N$ exactly (or when compressible, an approximation that is nearly as accurate as the best $k$ term approximation of $x$). 

Even though RIP is a deterministic condition, checking for RIP becomes computationally intractable as the size of the matrix grows \cite{bandeira2013certifying,tillmann2014computational}. Fortunately, broad classes of random matrices are known to satisfy RIP with overwhelming probability. In particular, consider $\Phi\in\mathbb{R}^{m\times N}$ whose rows are independent isotropic sub-Gaussian random vectors (note unusual normalization where we do not normalize based on the number of rows). If $m\geq C\epsilon^{-2}k\ln(eN/k)$, where $C>0$ depends only on the sub-Gaussian
norm of the rows, $\frac{1}{\sqrt{m}}\Phi$ satisfies the RIP of order $k$ with $\delta_k<\epsilon$ for any $\epsilon \in (0,1)$ with probability at least $1-2\exp(-c\epsilon^2m)$; see, e.g., \cite{Vershbook}. Therefore, such matrices are of interest in CS.
\subsubsection{Quantization for CS}
\label{sec:Quantization_for_CS}
As in the case of frame expansions, it is crucial that the compressed measurements are quantized given that our technology is almost exclusively digital. While the focus of the early literature on CS virtually neglected the issue of quantization (and compression in the sense of source coding), recent work focused on two alternative quantization approaches. In the first approach, one considers quantization methods that ``shape'' the approximation error in a way so that it can be ``filtered out'' during reconstruction. Such methods are called {\it noise-shaping quantizers} and have recently been shown to be effective in CS quantization \cite{Sobolev_Duals_for_RF,feng2014rip,NewSigmaDelta, Chou2016,SWY2018,SWY2017}, cf. \cite{chou2015noise,Chou2016,chou2017distributed}. In fact, one can achieve exponentially accurate approximations (with respect to the total bit budget) if one incorporates a post-$\sd$-quantization coding stage, i.e., one achieves nearly optimal encoding -- see \cite{SWY2017} for details.

While noise-shaping quantizers provide superior approximations, their implementation requires ``memory''. In applications where the measurements are obtained sequentially or all at once, e.g., when acquiring an audio signal or an image, this is not an important issue. However, if the measurements are obtained, say, by a distributed sensor network, implementing noise-shaping quantizers may be challenging if the sensors cannot communicate with each other efficiently. On the other hand MSQ is easy to implement in any setting and it remains to be the most popular approach in CS quantization; see, e.g., \cite{fletcher2007rate,boufounos2015quantization,lisurvey}. Specifically, various reconstruction methods have been proposed that aim to improve the approximation obtained from the MSQ-quantized compressive measurements \cite{zymnis2010compressed,MJCDV,Jacques2013,jacques2011dequantizing}. 
In this paper, we also focus on the use of {\it multi-bit} MSQ in the CS framework.\footnote{Note that one-bit MSQ provides a yet simpler quantization method. The analysis of the one-bit MSQ turns out to be significantly different from the multi-bit MSQ and is not within the scope of this paper, e.g., see \cite{boufounos2010reconstruction,plan2013one,ai2014one}.} That is, we will quantize the compressed measurements $y$, as defined above, by $Q(y)$, where $Q=Q_\delta^{\rm{MSQ}}$. One may interpret the associated quantization error $y-Q(y)$ as ``noise'' that is bounded by $\delta/2$ in $\ell_\infty$-norm (and consequently, by $\sqrt{m}\delta/2$ in $\ell_2$-norm). Such an approach allows us to apply classical robust recovery results in CS. Specifically, $\widehat{x}$ given by
\begin{equation}\label{coarse11}
\widehat{x}:=\arg\min \|z\|_1 \text{ subject to } \|\Phi z-Q(y)\|\leq \sqrt{m}\delta/2
\end{equation}
satisfies 
\begin{equation}\label{err1}
\|x-\widehat{x}\|\leq C\delta
\end{equation}
with high probability when $\lambda=\frac{m}{k}$ is sufficiently large and $\Phi$ is appropriately normalized as $m$ increases (which is crucial to ensure that the constant $C$ in \eqref{err1} does not depend on $m$ -- see \cite{Robust_cs} and \cite{Sobolev_Duals_for_RF}). Note, that while the approximation error in \eqref{err1}  scales linearly with the step size $\delta$ (as expected), it does not tend to zero as the ``redundancy'' $\lambda$ increases; this is observed also in numerical experiments in \cite{Sobolev_Duals_for_RF}. 

While the above mentioned approach follows naturally from the basic robust recovery results in CS, the reconstruction can be improved by employing alternative techniques that provide more accurate estimates, see, e.g., \cite{jacques2011dequantizing,boufounos2015quantization,Sobolev_Duals_for_RF}
and also \cite{boufounos20081,jacques2013robust,plan2013one} for the 1-bit case. One such alternative is the use of the ``two-stage framework'' as devised in \cite{Sobolev_Duals_for_RF}, which can be used to adopt any given frame quantization technique to the CS setting to improve the reconstruction. Next we describe this framework. 

Let $\Phi$ be an $m\times N$ CS measurement matrix, and let $y=\Phi x$ be the compressive samples of $x\in \Sigma_k^N$. Suppose that $T$ is the support of $x$, i.e., $T:=\{j: \ x(j)\ne 0\}$ with $\#T=k$. Then, $\Phi_T$ is the analysis matrix of a frame for $\R^k$ that consists of $m$ vectors and  the compressive samples satisfy $y=\Phi_T x_T$, i.e., the entries of $y$ are the frame coefficients of $x_T$ with respect to $\Phi_T=:E$. Note that if, for example, $\Phi$ is a random matrix with i.i.d. sub-Gaussian entries, then $\Phi_T$ is a \emph{random frame}  with i.i.d. sub-Gaussian entries. This observation motivates the use of any frame quantization method, in our case MSQ, along with a two-stage reconstruction scheme, as proposed in \cite{Sobolev_Duals_for_RF}.  
\subsection*{Two-stage reconstruction scheme for MSQ in CS}
Let $q=Q(y)$ with $Q=Q_\delta^{\rm{MSQ}}$. In {\bf Stage 1}, we recover a coarse approximation $x^\#_{\rm MSQ}$ by solving \eqref{coarse11}, 
from which the support $T$ of $x$ can be extracted with high probability under some additional conditions on $x$ -- see \cite{Sobolev_Duals_for_RF}. In {\bf Stage 2}, we obtain a refined estimate $x_{\rm MSQ}$ of $x_T$ by using a reconstruction method tailored to the underlying frame quantization problem. Our focus in Stage 2 will be the use of {\it linear reconstruction methods}. Specifically,  the refined estimate will be obtained via $x_{\rm MSQ}=\Phi_T^\dagger q$. Our goal is to analyze the approximation error $\|x-x_{\rm MSQ}\|_2$ as $\lambda$ increases.  

The error analysis for this two-stage scheme raises the following question in random frame theory: For an $m\times k$ random matrix $E$ with $m>k$ and for a deterministic signal $x\in\mathbb{R}^k$, how does $\|x-E^\dagger Q(Ex)\|$ decay as $\lambda$ grows? We focus on random matrices with independent, isotropic, sub-Gaussian rows. While the technique of \cite{Sobolev_Duals_for_RF} can be generalized to this case, it yields only a (non-zero) constant error bound independent of  $\lambda$. On the other hand, the WNH ``predicts'' an approximation with $O(\lambda^{-1/2})\delta$ error, which appears to agree with numerical experiments as seen, for example, in Figure \ref{fig:MSQforCS}.  Here, we tighten both the result in  \cite{Sobolev_Duals_for_RF} as well as the prediction using the WNH. Specifically, we prove that the reconstruction error behaves like $v+O(\lambda^{-1/2})\delta$, where $v>0$ is a small number that is barely noticeable in applications; this constant term, however, can not be removed from the error bound. We also extend this result to the CS setting when the recovery is performed using the two-stage algorithm stated above.

\section{MSQ for random frames}
\label{sec:MSQFramesReview}

Let $m\ge k$ and let $E \in \R^{m \times k}$ be a sub-Gaussian random matrix with independent isotropic rows. Suppose that $x\in\mathbb{R}^k$ is deterministic and fixed, and $Q=Q^{\text{MSQ}}_\delta$. We wish to control  the reconstruction error $\mathcal{E}(x)=\|x-E^\dagger Q(Ex)\|$ by establishing upper and lower bounds. First, since $\|Ex-Q(Ex)\|_\infty\le \delta/2$ and $\sigma_{\min}(E)\ge c\sqrt{m}$ with overwhelming probability  { \cite[p. 23 and p. 36]{Vershbook}}, a rough product bound yields
\begin{equation}\label{eq:rough_bound_on_reconstruction error}
\|x-E^\dagger Q(Ex)\|\leq \left(\sigma_{\min}(E)\right)^{-1}\sqrt{m}\delta/2 \le C \delta
\end{equation}
with overwhelming probability. 

Note that this bound captures the dependence of the error on the quantizer resolution $\delta$; but it does not depend on $m$, the number of measurements. Intuitively, we expect the error to decrease as we obtain more measurements, i.e., more information about $x$. This motivates further analysis of the approximation error $\mathcal{E}(x)$.

\noindent{\bf A heuristic bound.} Before we refine this bound rigorously in Section \ref{sec:mainresults}, we present a heuristic estimate based on a modified version of the WNH that shows that approximation error decays as $m$ increases. As we showed in Section \ref{sec:quantization_and_wnh}, such a decay can be ``justified'' using the WNH when the frame $E$ is deterministic while the signal is random. In our current setting the frame is random, so the WNH is not directly applicable: On one hand, since $x$ is deterministic and the random matrix $E$ has independent rows, the entries of $u:=Ex-Q(Ex)$ are independent random variables; if we additionally assume that the rows of $E$ are identically distributed, the entries of $u$ are identically distributed as well. This observation coincides in part with the WNH. On the other hand, \eqref{wnh111} does not hold anymore since $E^\dagger$ is random in our setting. One way to go around this is to introduce a modified WNH as follows. 

\medskip

\noindent {\bf Modified WNH (m-WNH):} \textit{Let $E$ be a random matrix. Assume that the signal $x$ is also random and independent of $E$. The m-WNH assumes that the conditional random variable $\left(Ex-Q(Ex)\right)|E$ has i.i.d. entries that are uniformly distributed in $(-\delta/2,\delta/2]$.}
\medskip

\noindent{\bf Implications of m-WNH:}
Set $E^\dagger=(e^\dagger_{ij})$, $u:=\left(Ex-Q(Ex)\right)$, and suppose that m-WNH holds. Then
\begin{align*}\mathbb{E}\left\{\|x-E^\dagger Q(Ex)\|^2|E\right\}&=\mathbb{E}\left\{\|E^\dagger\cdot (Ex-Q(Ex))\|^2|E\right\}\\
&=\sum_{i=1}^k\sum_{j=1}^m\sum_{s=1}^m e^\dagger_{ij}e^\dagger_{is}\mathbb{E}\left\{u_ju_s|E\right\}=\sum_{i=1}^k\sum_{j=1}^m\sum_{s=1}^m e^\dagger_{ij}e^\dagger_{is}\mathbb{E}\left\{u^2_j|E\right\}\one_{[j=s]}\\
&=\sum_{i=1}^k \sum_{j=1}^m(e^\dagger_{ij})^2 \frac{\delta^2}{12}=\frac{\delta^2}{12}\|E^\dagger\|_F^2\\
&\leq \frac{k \delta^2}{12}\|E^\dagger\|^2=\frac{k \delta^2}{12}\left(\sigma_{\min}(E)\right)^{-2}.\\
\end{align*}
Finally, using the law of total expectation,
\begin{align}
\mathbb{E}\|x-E^\dagger Q(Ex)\|^2&=\mathbb{E}\left(\mathbb{E}\left\{\|x-E^\dagger Q(Ex)\|^2|E\right\}\right) \notag \\
&\leq\mathbb{E}\frac{k \delta^2}{12}\left(\sigma_{\min}(E)\right)^{-2} \notag \\
&\leq \frac{k \delta^2}{12}\cdot Cm^{-1} \label{mWNH_bound}
\end{align}
In the last inequality, we used that $\sigma_{\min}(E)\ge c\sqrt{m}$ with overwhelming probability. In this case,
$\left(\mathbb{E}\{\mathcal{E}^2(x)\}\right)^{1/2} \leq C\delta\sqrt{k/m}$
provided that both $x$ and $E$ are random and the m-WNH holds.

Numerical experiments in Section \ref{sec:numerical_experiments} appear to agree with the heuristic calculation above: the empirical reconstruction error is $O(\lambda^{-1/2})\delta$, as predicted in \eqref{mWNH_bound} when $E$ is drawn from various random matrix ensembles. On the other hand, WNH is a special case of m-WNH, thus m-WNH is also not fully rigorous and not valid at least in certain cases. For example, in Corollary \ref{thm:MSQForFrame}, we show that in the case when the matrix $E$ has i.i.d. standard Gaussian random entries (we often say such an $E$ is a {\it Gaussian random matrix}), m-WNH does not hold. This motivates our error analysis in the rest of the paper that does not rely on m-WNH.

\subsection{Error estimates without WNH -- main results}
\label{sec:mainresults}

From here on, let $E\in\mathbb{R}^{m\times k}$ with $m>k$, $\delta>0$, $Q:=Q_\delta^{\rm MSQ}$, and $\lambda:=m/k$. We seek to estimate the reconstruction error $\mathcal{E}(x)=\|x-E^\dagger Q(Ex)\|$ for $x\in \R^k$.

\begin{theorem}
\label{thm:MSQforFrameGeneral}
Let $k\geq 3$ and let $x\in\mathbb{R}^k$ be fixed. Suppose that $\lambda>1$ and $E\in\mathbb{R}^{m\times k}$ is a random matrix with independent isotropic sub-Gaussian rows whose $\psi_2$-norm does not exceed $K$. Then, there is an absolute constant $C>0$ such that for every $c_2\in(0,1)$, setting  $c_1=CK\sqrt{\ln (e^2c_2^{-1})}>0$, we have
\begin{equation}
\label{eq:main_for_frames}
\mathcal{E}(x)<A\left(\mu+c_1\sqrt{\log k}\lambda^{-1/2}\delta\right)
\end{equation}
and
\begin{equation}
\label{eq:lowerboundthm}
\mathcal{E}(x)>A'\left(\mu-c_1\sqrt{\log k}\lambda^{-1/2}\delta\right),
\end{equation}
with probability at least $1-c_2-2\exp(-c_3m)$. Here $\mu=\frac{1}{m}\|\mathbb{E}E^T(Ex-Q(Ex))\|$,  $A=(1/2-c_K\lambda^{-1/2})^{-2}>0$, $A'=(3/2+c_K\lambda^{-1/2})^{-2}>0$, $c_K>0$ depends only on $K$, and $c_3>0$ is an absolute constant.
\end{theorem}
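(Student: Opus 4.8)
The plan is to peel the problem into a \emph{concentration} part and a \emph{conditioning} part. Writing $u:=Ex-Q(Ex)$ and denoting by $e_j^T$ the $j$-th row of $E$, the $j$-th entry of $u$ is $u_j=\langle e_j,x\rangle-q_\delta(\langle e_j,x\rangle)$, which is a measurable function of $e_j$ alone and satisfies $|u_j|\le\delta/2$ \emph{deterministically}. Since $E$ has independent rows, the vectors $w_j:=u_je_j\in\R^k$, $j=1,\dots,m$, are independent, and each is sub-Gaussian with $\|w_j\|_{\psi_2}=\sup_{z\in\mathbb S^{k-1}}\|u_j\langle e_j,z\rangle\|_{\psi_2}\le\tfrac\delta2\|e_j\|_{\psi_2}\le\tfrac{\delta K}{2}$. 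Using $E^\dagger=(E^TE)^{-1}E^T$ we have $\mathcal E(x)=\|(E^TE)^{-1}E^Tu\|$ with $E^Tu=\sum_j w_j$, and $\mu=\tfrac1m\|\mathbb E E^Tu\|=\bigl\|\tfrac1m\sum_j\mathbb E w_j\bigr\|$; so the task reduces to comparing $\tfrac1m\|E^Tu\|$ with $\mu$ and controlling the extreme singular values of $E$.

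\emph{Concentration.} Put $v:=\tfrac1m\sum_{j=1}^m(w_j-\mathbb E w_j)$. For each coordinate $i$, $v_i$ is an average of $m$ independent, mean-zero, sub-Gaussian scalars of $\psi_2$-norm $\le\delta K$ (centering at most doubles the norm), so the general Hoeffding inequality gives $\Pr(|v_i|>s)\le2\exp(-cs^2m/(\delta K)^2)$. A union bound over $i\in\{1,\dots,k\}$ together with $\|v\|\le\sqrt k\max_i|v_i|$, and the choice of $s$ making the total failure probability equal to $c_2$, yield $\|v\|\le\tfrac{\delta K}{\sqrt c}\lambda^{-1/2}\sqrt{\ln(2k/c_2)}$ with probability at least $1-c_2$. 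Since $k\ge3$ forces $\ln k>1$, one has $\ln(2k/c_2)\le C\ln(e^2c_2^{-1})\ln k$, so with $c_1:=CK\sqrt{\ln(e^2c_2^{-1})}$ this reads $\|v\|\le c_1\sqrt{\log k}\,\lambda^{-1/2}\delta$, and by the triangle inequality
$$\mu-c_1\sqrt{\log k}\,\lambda^{-1/2}\delta\ \le\ \tfrac1m\|E^Tu\|\ \le\ \mu+c_1\sqrt{\log k}\,\lambda^{-1/2}\delta.$$

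\emph{Conditioning and combining.} By the standard estimate for matrices with independent isotropic sub-Gaussian rows \cite[pp.~23,~36]{Vershbook}, taking the deviation parameter proportional to $\sqrt m$, there is an event of probability at least $1-2\exp(-c_3m)$ on which $\sqrt m(1/2-c_K\lambda^{-1/2})\le\sigma_{\min}(E)\le\sigma_{\max}(E)\le\sqrt m(3/2+c_K\lambda^{-1/2})$ for some $c_K$ depending only on $K$; equivalently $\sigma_{\min}(E)^{-2}\le A/m$ and $\sigma_{\max}(E)^{-2}\ge A'/m$. On the intersection with the concentration event (probability $\ge1-c_2-2\exp(-c_3m)$), apply $\|(E^TE)^{-1}z\|\le\sigma_{\min}(E)^{-2}\|z\|$ and $\|(E^TE)^{-1}z\|\ge\sigma_{\max}(E)^{-2}\|z\|$ (the latter since $(E^TE)^{-1}$ is symmetric positive definite with smallest eigenvalue $\sigma_{\max}(E)^{-2}$) to $z=E^Tu$, obtaining
$$\mathcal E(x)\le\sigma_{\min}(E)^{-2}\|E^Tu\|\le A\bigl(\mu+c_1\sqrt{\log k}\,\lambda^{-1/2}\delta\bigr)$$
and
$$\mathcal E(x)\ge\sigma_{\max}(E)^{-2}\|E^Tu\|\ge A'\bigl(\mu-c_1\sqrt{\log k}\,\lambda^{-1/2}\delta\bigr),$$
which are \eqref{eq:main_for_frames} and \eqref{eq:lowerboundthm} (the lower bound being vacuous when its right-hand side is negative).

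\emph{Where the difficulty lies.} The only step with real content is the concentration step, and within it the one subtle point is that $w_j=u_je_j$ inherits a clean sub-Gaussian bound even though $u_j$ is a nonlinear (indeed discontinuous) function of $e_j$: this is rescued by the pointwise bound $|u_j|\le\delta/2$, which can only contract the $\psi_2$-norm; everything else is Hoeffding plus a union bound, with $k\ge3$ used only to fold $\ln(2k/c_2)$ into the advertised form $c_1^2\log k$. Identical distribution of the rows is not needed: independence already makes the $w_j$ independent, and $\mu$ is defined through $\mathbb E E^Tu=\sum_j\mathbb E w_j$ in full generality. The estimation of $\mu$ itself, which is genuinely distribution-dependent and is where the interesting phenomena appear (e.g.\ the failure of the m-WNH for Gaussian $E$ in Corollary~\ref{thm:MSQForFrame}), is left to the corollaries.
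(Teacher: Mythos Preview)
Your proof is correct and follows essentially the same route as the paper's: decompose $\mathcal E(x)=\|(E^TE)^{-1}E^Tu\|$, control $m\sigma_{\max}((E^TE)^{-1})$ and $m\sigma_{\min}((E^TE)^{-1})$ by taking $t=\sqrt m/2$ in the sub-Gaussian singular-value bound, and handle $\tfrac1m\|E^Tu-\mathbb E E^Tu\|$ coordinatewise via Hoeffding (using that $|u_j|\le\delta/2$ forces $\|u_je_{ji}\|_{\psi_2}\le\tfrac{\delta}{2}K$) followed by a union bound over the $k$ coordinates. The paper packages the concentration step as a separate lemma and tracks the constants through the choice $c_2=e^{2-cc_1^2}$ rather than your $\ln(2k/c_2)\le C\ln(e^2c_2^{-1})\ln k$ bookkeeping, but the argument is the same.
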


\begin{remark}
The probability of failure contains a constant term $c_2$. This constant is unavoidable, but it may be chosen to depend on $m$ and/or $\lambda$. Note that such a change will affect $c_1$.
\end{remark}

The error bounds in (\ref{eq:main_for_frames}) are composed of (constant multiples of) two summands: $\mu=\frac{1}{m}\|\mathbb{E}E^T(Ex-Q(Ex))\|$ and $\sqrt{\log k}\lambda^{-1/2}\delta$. As the decay rate of the latter agrees with that predicted by the m-WNH, we will focus on the first term, i.e., $\mu$, for random matrices $E$ with identically distributed rows. 

\begin{proposition}
	\label{prop:constant_term_is_constant}
Assume that the conditions of Theorem \ref{thm:MSQforFrameGeneral} hold and, in addition, the rows $e_i^T$ of $E$ are identically distributed.  Then,
$$\mathbb{E}E^T(Ex-Q(Ex)) =  m\mathbb{E} e_1(e_1^Tx-Q(e_1^Tx)).$$
This implies
$$\mu=\|\mathbb{E} e_1(e_1^Tx-Q(e_1^Tx))\|.$$
\end{proposition}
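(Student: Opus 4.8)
The claim is purely about the structure of $\mathbb{E}E^T(Ex-Q(Ex))$ when the rows of $E$ are i.i.d., so the plan is to expand the matrix-vector product row by row and use linearity of expectation together with the fact that the $i$-th summand depends only on the $i$-th row $e_i^T$. First I would write $u=Ex-Q(Ex)\in\R^m$, noting that its $i$-th coordinate is $u_i=e_i^Tx-q_\delta(e_i^Tx)$, which is a (deterministic, measurable) function of the row $e_i$ alone. Then $E^Tu=\sum_{i=1}^m e_i u_i=\sum_{i=1}^m e_i\bigl(e_i^Tx-q_\delta(e_i^Tx)\bigr)$, where $e_i\in\R^k$ is the $i$-th row viewed as a column vector.

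Next I would take expectations termwise: $\mathbb{E}E^T(Ex-Q(Ex))=\sum_{i=1}^m \mathbb{E}\bigl[e_i(e_i^Tx-q_\delta(e_i^Tx))\bigr]$. Since the rows are identically distributed, each summand equals $\mathbb{E}\bigl[e_1(e_1^Tx-q_\delta(e_1^Tx))\bigr]$, and there are $m$ of them, giving $m\,\mathbb{E}e_1(e_1^Tx-Q(e_1^Tx))$. This is exactly the first displayed identity. (Here I am implicitly using that the relevant expectations are finite, which follows from the sub-Gaussian hypothesis on the rows together with the boundedness of $e_1^Tx-q_\delta(e_1^Tx)$ by $\delta/2$; this makes each coordinate of $e_1(e_1^Tx-q_\delta(e_1^Tx))$ integrable.) Dividing by $m$ and taking $\ell_2$-norms then yields $\mu=\frac{1}{m}\|\mathbb{E}E^T(Ex-Q(Ex))\|=\|\mathbb{E}e_1(e_1^Tx-Q(e_1^Tx))\|$, which is the second claim.

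There is really no substantive obstacle here: the argument is a one-line computation once one observes that $u_i$ depends only on $e_i$ and invokes the i.i.d.\ assumption. The only point requiring any care is the integrability justification needed to split the expectation over the sum and to pull the expectation inside, but this is routine given the hypotheses already in force from Theorem~\ref{thm:MSQforFrameGeneral}. I would therefore keep the write-up short, emphasizing the decomposition $E^Tu=\sum_i e_i u_i$ and the observation that each term is an identically distributed integrable random vector.
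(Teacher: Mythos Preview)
Your proposal is correct and matches the paper's proof essentially line for line: both decompose $E^T(Ex-Q(Ex))=\sum_{i=1}^m e_i(e_i^Tx-Q(e_i^Tx))$, apply linearity of expectation, and use the identical-distribution assumption to collapse the sum to $m$ copies of the $i=1$ term. Your added remark on integrability is a small refinement the paper leaves implicit.
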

\begin{proof}
Fix $x\in\R^k$. Then,
$$\mathbb{E}E^T(Ex-Q(Ex))=\sum_{i=1}^m \mathbb{E} e_i(e_i^Tx-Q(e_i^Tx)),$$
where the summands on the right hand side are identical vectors all equal to, say, $\mathbb{E} e_1(e_1^Tx-Q(e_1^Tx))$. It follows that
$$\mu=\frac{1}{m}\|\mathbb{E}E^T(Ex-Q(Ex))\|=\|\mathbb{E} e_1(e_1^Tx-Q(e_1^Tx))\|$$ which completes the proof. 
\end{proof}
\begin{remark}
Proposition \ref{prop:constant_term_is_constant} shows that, if matrix $E$ has i.i.d. isotropic sub-Gaussian rows, then $\mu$ in Theorem \ref{thm:MSQforFrameGeneral} is a constant that does not depend on $m$. Thus, in cases when $\mu\ne 0$, e.g., when $E$ is a Gaussian matrix -- see Corollary \ref{thm:MSQForFrame}, $\mathcal{E}(x)=O(1)$, rather than $O(m^{-1})$ which is what the m-WNH predicts. 

\end{remark}

Next, we establish an upper bound for $\mu$.
\begin{proposition} \label{prop:bound1}
In the setting of Theorem \ref{thm:MSQforFrameGeneral}, 
$
\mu\leq \frac{\delta}{2}(1+c_K\sqrt{\frac{k}{m}}).
$
\end{proposition}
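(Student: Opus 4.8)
The plan is to bound $\mu = \|\mathbb{E}\,e_1(e_1^Tx - Q(e_1^Tx))\|$ by estimating, for an arbitrary unit vector $z \in \mathbb{S}^{k-1}$, the scalar quantity $\langle z, \mathbb{E}\,e_1(e_1^Tx-Q(e_1^Tx))\rangle = \mathbb{E}\big[\langle e_1,z\rangle\,(e_1^Tx - Q(e_1^Tx))\big]$, and then taking the supremum over $z$. The starting observation is the deterministic bound $|e_1^Tx - Q(e_1^Tx)| \le \delta/2$ pointwise, which immediately gives $\mu \le (\delta/2)\,\mathbb{E}|\langle e_1,z\rangle|$ via Cauchy–Schwarz inside the expectation. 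Since $e_1$ is isotropic, $\mathbb{E}\langle e_1,z\rangle^2 = \|z\|^2 = 1$, so $\mathbb{E}|\langle e_1,z\rangle| \le 1$ by Jensen. That already yields $\mu \le \delta/2$, but not the sharper form with the $c_K\sqrt{k/m}$ correction — so this crude route is not quite enough, and the actual statement suggests the bound should be applied in a context where a slightly larger constant is tolerated, or where the quantity being bounded is not $\mu$ itself but $\mu$ up to the spectral fluctuations of $E$.

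My reading is that the intended proof exploits the proof structure of Theorem~\ref{thm:MSQforFrameGeneral}, where $\mu = \frac{1}{m}\|\mathbb{E}E^T(Ex-Q(Ex))\|$ naturally appears alongside the random quantity $\frac{1}{m}\|E^T(Ex-Q(Ex))\|$, and where the factor $1 + c_K\sqrt{k/m}$ is exactly the kind of multiplicative distortion one gets from $\|E^T\|_{\mathrm{op}} = \sigma_{\max}(E) \le \sqrt{m} + c_K\sqrt{k}$ for sub-Gaussian matrices (see \cite[p.~23]{Vershbook}). Concretely, I would write $\mathbb{E}E^T(Ex-Q(Ex)) = \mathbb{E}\big[E^T u\big]$ with $u = Ex - Q(Ex)$, bound $\|E^T u\| \le \sigma_{\max}(E)\,\|u\|$, use $\|u\| \le \sqrt{m}\,\delta/2$, and then take expectations: $\mu \le \frac{1}{m}\mathbb{E}\big[\sigma_{\max}(E)\big]\cdot \sqrt{m}\,\delta/2 \le \frac{\delta}{2}\cdot\frac{1}{\sqrt{m}}\mathbb{E}\big[\sigma_{\max}(E)\big]$. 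The final step is to invoke the standard upper bound $\mathbb{E}\,\sigma_{\max}(E) \le \sqrt{m} + c_K\sqrt{k}$ for a matrix with independent isotropic sub-Gaussian rows of $\psi_2$-norm at most $K$, giving $\mu \le \frac{\delta}{2}(1 + c_K\sqrt{k/m})$, which is the claim with $c_K$ depending only on $K$.

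The only real subtlety is which operator-norm estimate to cite and in what form: one wants a bound on $\mathbb{E}\,\sigma_{\max}(E)$ (an in-expectation bound), not merely a high-probability one, and for the tall sub-Gaussian case this follows from the deviation inequality $\sigma_{\max}(E) \le \sqrt{m} + CK(\sqrt{k} + t)$ with probability $1 - 2e^{-t^2}$ integrated over $t$, producing $\mathbb{E}\,\sigma_{\max}(E) \le \sqrt{m} + c_K\sqrt{k}$ with $c_K = c_K(K)$. I expect this integration-of-the-tail step to be the main (though routine) technical point; everything else is the two deterministic inequalities $|e_1^Tx - Q(e_1^Tx)|\le\delta/2$ and $\|E^Tu\|\le\sigma_{\max}(E)\|u\|$. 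It is worth double-checking that the isotropy and sub-Gaussianity hypotheses in the cited operator-norm bound match exactly the hypotheses of Theorem~\ref{thm:MSQforFrameGeneral} (independent isotropic sub-Gaussian rows), which they do, so no extra assumptions are needed.
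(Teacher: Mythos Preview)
Your second approach --- bounding $\|\mathbb{E}E^T u\|$ by $\mathbb{E}[\sigma_{\max}(E)\|u\|]$, using $\|u\|\le\sqrt{m}\,\delta/2$, and then invoking the expectation bound $\mathbb{E}\,\sigma_{\max}(E)\le\sqrt{m}+c_K\sqrt{k}$ obtained by integrating the tail in Theorem~\ref{thm:matrixnormestGeneral} --- is exactly the paper's proof, including the observation that the in-expectation bound on $\sigma_{\max}$ is the one routine technical point.

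However, you misjudged your first approach. The bound $\mu\le\delta/2$ that you obtained via isotropy and Jensen is \emph{stronger} than the stated bound $\tfrac{\delta}{2}(1+c_K\sqrt{k/m})$, not weaker: since $c_K\sqrt{k/m}>0$, one has $\delta/2 < \tfrac{\delta}{2}(1+c_K\sqrt{k/m})$. So your ``crude route'' already proves the proposition with room to spare, and the operator-norm detour is unnecessary. One caveat: as written, your first argument starts from $\mu=\|\mathbb{E}\,e_1(e_1^Tx-Q(e_1^Tx))\|$, which uses the additional identical-distribution hypothesis of Proposition~\ref{prop:constant_term_is_constant}, whereas Theorem~\ref{thm:MSQforFrameGeneral} only assumes independent rows. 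But the same idea goes through row-by-row without that assumption:
\[
\mu = \tfrac{1}{m}\sup_{\|z\|=1}\sum_{j=1}^m\mathbb{E}\big[\langle e_j,z\rangle\,(e_j^Tx-Q(e_j^Tx))\big]
\le \tfrac{\delta}{2m}\sum_{j=1}^m\mathbb{E}|\langle e_j,z\rangle|
\le \tfrac{\delta}{2m}\sum_{j=1}^m\big(\mathbb{E}\langle e_j,z\rangle^2\big)^{1/2}=\tfrac{\delta}{2},
\]
using isotropy of each row in the last step.
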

\begin{proof}
\begin{align*}
\mu=\frac{1}{m}\|\mathbb{E}E^T(Ex-Q(Ex))\|&\leq
\frac{1}{m}\mathbb{E}\left(\sigma_{\max}(E^T) \|Ex-Q(Ex)\|\right)\\
&\leq \frac{1}{m}\mathbb{E}\left(\sigma_{\max}(E^T)\frac{\delta}{2}\sqrt{m}\right)\\
&\leq \frac{\delta}{2}(1+c_K\sqrt{\frac{k}{m}}).
\end{align*}
For the bound on $\mathbb{E}\sigma_{\max}(E^T)$, we used Theorem \ref{thm:matrixnormestGeneral}.
\end{proof}
The above estimate implies that, in the worst-case scenario, $\mu$ is bounded by $\delta/2+\epsilon$ as $m\to \infty$.

Indeed, the bound of order $\delta/2$ is observed {\it numerically} if the random frame $E$ is a submatrix of the first $k$ columns of a sufficiently large Fourier matrix with $m$ randomly selected rows, see Figure \ref{fig:MSQ_Fourier}.

The following artificial scenarios illustrate (provably) that the reconstruction error indeed does not always decay to zero but may tend to a non-zero constant, possibly (nearly) as big as the upper bound in Proposition \ref{prop:constant_term_is_constant}. 
\begin{enumerate}[(A)]
		\item  		\label{case:artif_example_Bernoulli1D}
		{[{\it Bernoulli frame, one-dimensional signal]}} Let $k=1$, i.e., $x \in \R$, and let $E\in\mathbb{R}^{m\times 1}$ be a $\pm1$ Bernoulli frame. Then each individual sample is either $x$ or $-x$, i.e., more samples do not bring any additional information. Thus, the optimal reconstruction from quantized measurements $Q(Ex)$ is $\widetilde{x}=Q(x)$. Accordingly, if $Q(x)\neq x$, which is the case for almost all $x$, the reconstruction error $|x-\widetilde{x}|$ is non-zero, and in fact, it can be as big as $\delta/2$. Note that the same discussion applies if we consider a Bernoulli random frame with $k>1$ and set $x=c (1,0,0,...,0)\in \R^k$.
		\item $ $[{\it Bernoulli frame, a class of high-dimensional signals}]
		Let the frame $E\in\mathbb{R}^{m\times k}$ be a $\pm1$ Bernoulli frame. Consider all $x=(x_1,x_2,...,x_k)$ such that 
		\begin{equation}
		\label{eq:artif_examples_Bernoulli}
		|x_i-Q(x_i)|<\frac{\delta}{2k}
		\end{equation} 
		Using that all entries of $E=(e_{ij})$ are $\pm1$, we get that for all $i\in\{1,2,..,m\}$, $$ -\delta/2+(EQ(x))_i < (Ex)_i < \delta/2+(EQ(x))_i $$
This, together with the fact that $(EQ(x))_i \in \delta\Z$, implies that $Q(Ex)=EQ(x)$ for all $x$ satisfying \eqref{eq:artif_examples_Bernoulli}. Accordingly, for two signals $x^1$ and $x^2$ such that $Q(x^1)=Q(x^2)$ and \eqref{eq:artif_examples_Bernoulli} holds, $Q(Ex^1)$ equals $Q(Ex^2)$, and thus the reconstruction will be identical, yielding an $\ell_2$ reconstruction error as big as $\frac{\delta}{2\sqrt{k}}$.
		
		\item $ $[{\it Bernoulli random frame, $m>2^k$}]
                  Consider a $\pm1$ Bernoulli matrix $E\in\mathbb{R}^{m\times k}$ and any signal $x\in\mathbb{R}^k$ where $m>2^k$. Note that such a Bernoulli matrix can have at most $2^k$ distinct rows, i.e., if we exclude repetitive rows, we get at most $2^k$ different measurements. Once the frame $E$ contains all $2^k$ distinct rows, more measurements do not bring any additional information. Therefore, unless $Ex=Q(Ex)$, the reconstruction error will saturate at a non-zero constant.
		
		\item $ $[{\it Fourier random frame, a class of high-dimensional signals}] Consider the discrete Fourier transform (DFT) matrix $F\in\mathbb{R}^{N\times N}$. We select the first $d$ columns of $F$ and we draw rows uniformly at random. Suppose that the signal $x=(c,0,0,...,0).$ Then, all samples of the signal will be identical, and similar to the case \ref{case:artif_example_Bernoulli1D}, the reconstruction error for some such signals is very close to $\delta/2$.  \end{enumerate}

While some of these scenarios attain (nearly) the bound of Proposition \ref{prop:bound1}, this bound is not tight, at least for some ``nice'' ensembles. In Corollary \ref{thm:MSQForFrame}, we show that $\mu$ is not zero when $E$ is Gaussian, but $\mu$ it is very small in any practically relevant setting, which we prove in the next section.

\subsubsection{Gaussian random matrices.}
\label{sec:Gaussian_matrices}

Next, we obtain sharp bounds for $\|\mathbb{E}E^T(Ex-Q(Ex))\|$ when $E$ is a Gaussian random matrix. These, in turn, yield bounds for the reconstruction error $\mathcal{E}(x)=\|x-E^\dagger Q(Ex)\|$.

\begin{theorem}
\label{thm:estmean}
Let $x\in\mathbb{R}^k$ be fixed and let  $E\in\mathbb{R}^{m\times k}$ be a random matrix with isotropic sub-Gaussian rows. Furthermore, assume that the entries of $E$ are i.i.d. whose density function $\phi$ is a Schwartz function. Then the $i$th entry of $\mathbb{E}E^T(Ex-Q(Ex))$ satisfies
\begin{equation}
\label{eq:OneEntry_Schwartz}
\left(\mathbb{E}E^T(Ex-Q(Ex))\right)_i=m\left(x_i+x_i\sum_{p\in\mathbb{Z}}\left((-1)^p\widehat{g}(\frac{|x_i|}{\delta}p)\prod_{s\neq i}\widehat{\phi}\left(\frac{x_s\text{sign}(x_i)}{\delta}p\right)\right)\right),
\end{equation}
where $g(z):=\mathbb{E}e_{11}\one_{\{e_{11}\leq z\}}=\int_{-\infty}^z t\phi(t)dt.$

In particular, if $E$ is a Gaussian matrix, 
$$\left(\mathbb{E}E^T(Ex-Q(Ex))\right)_i=-2mx_i\sum_{p=1}^\infty(-1)^p\exp(-\frac{2\pi^2\|x\|^2p^2}{\delta^2}),$$
which implies	
\begin{equation}
\label{eq:single_entry_Gaussian_est}	
2\|x\|\left(\exp(-\frac{2\pi^2\|x\|^2}{\delta^2})-\exp(-\frac{8\pi^2\|x\|^2}{\delta^2})\right)<\mu< 2\|x\|\exp(-\frac{2\pi^2\|x\|^2}{\delta^2}),
\end{equation}
where $\mu=\frac{1}{m}\| \mathbb{E}E^T(Ex-Q(Ex))\|$ as in Theorem \ref{thm:MSQforFrameGeneral}.
\end{theorem}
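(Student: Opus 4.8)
The plan is to reduce \eqref{eq:OneEntry_Schwartz} to a scalar computation for a single row of $E$ and then expand the quantization error in a Fourier series. Let $e=(e_1,\dots,e_k)^T$ denote a generic row of $E$, so that $e$ has i.i.d.\ entries with density $\phi$. Since the rows of $E$ are i.i.d.\ and $Q$ acts coordinatewise, linearity of expectation gives
\[
\left(\mathbb{E}E^T(Ex-Q(Ex))\right)_i = m\,\mathbb{E}\!\left[e_i\big(\langle e,x\rangle - q_\delta(\langle e,x\rangle)\big)\right].
\]
The map $z\mapsto z-q_\delta(z)$ is the $\delta$-periodic centered sawtooth (equal to $z$ on $(-\delta/2,\delta/2]$), whose Fourier series is $z-q_\delta(z)=\delta\sum_{p\neq 0}\frac{(-1)^{p+1}}{2\pi i p}e^{2\pi i p z/\delta}$, and whose symmetric partial sums are real and bounded uniformly in the truncation level and in $z$ (the classical bounded-overshoot fact for the sawtooth). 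Assuming $x\neq 0$ (the case $x=0$ being trivial), $\langle e,x\rangle$ has a density, so those partial sums converge to $z-q_\delta(z)$ almost surely when evaluated at $\langle e,x\rangle$; combined with $\mathbb{E}|e_i|<\infty$, this justifies integrating the series term by term via dominated convergence.

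Next I would evaluate the terms using independence of the coordinates of $e$. Fixing the normalization $\widehat f(\xi)=\int_{\mathbb R}f(t)e^{-2\pi i\xi t}\,dt$, one has $\mathbb{E}\!\left[e^{2\pi i p\langle e,x\rangle/\delta}\right]=\prod_{s}\mathbb{E}\!\left[e^{2\pi i p x_s e_s/\delta}\right]$ with $\mathbb{E}\!\left[e^{2\pi i p x_s e_s/\delta}\right]=\widehat\phi(-p x_s/\delta)$, while the extra factor $e_i$ turns the $s=i$ factor into $\int t\phi(t)e^{2\pi i p x_i t/\delta}\,dt = \widehat{g'}(-px_i/\delta) = -\tfrac{2\pi i p x_i}{\delta}\widehat g(-px_i/\delta)$, using $g'(t)=t\phi(t)$ and the derivative rule for the Fourier transform (here $g$ inherits rapid decay --- indeed membership in the Schwartz class --- from $\phi$). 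Substituting into the series, the factors $2\pi i p/\delta$ cancel against the sawtooth coefficients and one is left with $m x_i\sum_{p\neq 0}(-1)^p\widehat g(-px_i/\delta)\prod_{s\neq i}\widehat\phi(-px_s/\delta)$. To match \eqref{eq:OneEntry_Schwartz} I would then (i) reindex $p\mapsto -p$ when $x_i<0$ so that the argument of $\widehat g$ reads $\tfrac{|x_i|}{\delta}p$, which forces $\text{sign}(x_i)$ into the arguments of the $\widehat\phi$ factors, and (ii) reinstate the $p=0$ term, evaluating $\widehat\phi(0)=\int\phi=1$ and $\widehat g(0)=\int g = -\int z^2\phi(z)\,dz = -1$, the last step being integration by parts together with the isotropy (unit variance) of the entries; this produces the leading $m x_i$ and converts $\sum_{p\neq0}$ into $\sum_{p\in\mathbb Z}$.

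For the Gaussian case, specialize $\phi(t)=\tfrac{1}{\sqrt{2\pi}}e^{-t^2/2}$, for which $\widehat\phi(\xi)=e^{-2\pi^2\xi^2}$ and --- crucially --- $g(z)=\int_{-\infty}^z t\phi(t)\,dt=-\phi(z)$, hence $\widehat g=-\widehat\phi$. Then in \eqref{eq:OneEntry_Schwartz} the product $\widehat g\!\big(\tfrac{|x_i|}{\delta}p\big)\prod_{s\neq i}\widehat\phi\!\big(\tfrac{x_s\text{sign}(x_i)}{\delta}p\big)$ collapses to $-e^{-2\pi^2\|x\|^2 p^2/\delta^2}$; isolating the $p=0$ term and pairing $p$ with $-p$ yields $\left(\mathbb{E}E^T(Ex-Q(Ex))\right)_i=-2m x_i\sum_{p\geq 1}(-1)^p e^{-2\pi^2\|x\|^2 p^2/\delta^2}$, as stated. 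Writing $S:=\sum_{p\ge1}(-1)^p e^{-2\pi^2\|x\|^2 p^2/\delta^2}$, the series is alternating with strictly decreasing terms for $x\neq0$, so the alternating-series estimate gives $e^{-2\pi^2\|x\|^2/\delta^2}-e^{-8\pi^2\|x\|^2/\delta^2}<|S|<e^{-2\pi^2\|x\|^2/\delta^2}$. Since the vector $\mathbb{E}E^T(Ex-Q(Ex))$ equals $-2mS\,x$, we obtain $\mu=\tfrac{1}{m}\,|2mS|\,\|x\|=2|S|\,\|x\|$, and \eqref{eq:single_entry_Gaussian_est} follows.

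The step I expect to be the main obstacle is the rigorous term-by-term integration of the sawtooth Fourier series: it requires combining the (standard but not self-evident) uniform boundedness of its partial sums with the existence of a density for $\langle e,x\rangle$ and an $L^1$ dominating function for dominated convergence, and one must separately note that the resulting series over $\mathbb Z$ is genuinely absolutely convergent --- this is where the Schwartz decay of $\widehat\phi$ (or of $\widehat g$, in the degenerate case $x=x_i e_i$) is needed. The remaining ingredients --- the characteristic-function and derivative-rule identifications, the integration-by-parts value of $\widehat g(0)$, the identity $g=-\phi$ in the Gaussian case, and the alternating-series bound --- are routine.
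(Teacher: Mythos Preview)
Your proposal is correct and takes a genuinely different route from the paper. The paper does \emph{not} expand the sawtooth $z\mapsto z-q_\delta(z)$ in a Fourier series. Instead it writes $\mathbb{E}[e_{1i}F(\langle e_1,x\rangle)]=x_i-\mathbb{E}[e_{1i}Q(\langle e_1,x\rangle)]$, decomposes $Q$ as $\sum_{n\in\mathbb Z}n\delta\cdot\mathbb{1}\{\cdot\in(n\delta-\delta/2,n\delta+\delta/2]\}$, conditions on $\eta_i=\sum_{s\neq i}e_{1s}\overline{x_s}$ so that the inner expectation becomes $g$ evaluated at shifted lattice points, telescopes in $n$ to obtain $-\delta\sum_{n\in\mathbb Z}g\!\big((n\delta+\delta/2-\eta_i)/|x_i|\big)$, and only then applies the Poisson summation formula to pass to $\sum_p\widehat g(\cdot)$; the outer expectation over $\eta_i$ finally produces the product of $\widehat\phi$'s.

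Your argument bypasses the conditioning, the telescoping in $n$, and Poisson summation altogether: the Fourier series of the sawtooth already lives on the dual lattice, and the identities $\mathbb{E}[e^{2\pi i p x_s e_s/\delta}]=\widehat\phi(-px_s/\delta)$ and $\int t\phi(t)e^{2\pi i p x_i t/\delta}\,dt=\widehat{g'}(-px_i/\delta)=-\tfrac{2\pi i p x_i}{\delta}\widehat g(-px_i/\delta)$ do the rest. The price you pay is the need to justify term-by-term integration of a conditionally (not absolutely) convergent Fourier series, which you handle via the uniform boundedness of the sawtooth partial sums and dominated convergence; the paper instead leans on the Schwartz property of $g$ to justify Poisson summation and the Fubini--Tonelli interchanges. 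Both routes ultimately need the Schwartz hypothesis (yours for absolute convergence of the final $p$-series through rapid decay of $\widehat g$ or $\widehat\phi$), and both reduce the Gaussian case and \eqref{eq:single_entry_Gaussian_est} to the same alternating-series estimate. Your reinstatement of the $p=0$ term via $\widehat g(0)=\int g=-\int z^2\phi=-1$ is a clean way to recover the leading $m x_i$ that the paper obtains earlier from $\mathbb{E}[e_{1i}\langle e_1,x\rangle]=x_i$.
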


\begin{remark}
The right-hand side of \eqref{eq:OneEntry_Schwartz} may be to calculate exactly. On the other hand, since $\phi$ is a Schwartz function (and so are $\widehat{\phi}$ and $\widehat{g}$), it may be accurately approximated by truncating the series to its first few terms.
\end{remark}

\begin{remark} For Gaussian random matrices, the term $\mu$ is not larger than $2\|x\|\exp(-\frac{2\pi^2\|x\|^2}{\delta^2})$, which is typically very small. For example, if $\|x\|=1$ and $\delta=0.5$, this bound is approximately $2.05\times 10^{-34}$.
\end{remark}

We end this section by combining Theorem \ref{thm:MSQforFrameGeneral} and Theorem \ref{thm:estmean} to state the lower and upper bounds on the approximation error for Gaussian frames. 

\begin{corollary}[Reconstruction error for Gaussian frames]
\label{thm:MSQForFrame}
Let $k\geq 3$, $x\in\mathbb{R}^k$, and suppose that $E$ is an $m\times k$ Gaussian matrix (i.e., its entries are i.i.d. standard Gaussian). Then, there is an absolute constant $C>0$ such that for every $c_2\in(0,1)$, setting  $c_1=C\sqrt{\ln (e^2c_2^{-1})}>0$, we have
\begin{equation}
\mathcal{E}(x)<A\left(2\|x\| e^{-\frac{2\pi^2\|x\|^2}{\delta^2}}+c_1\sqrt{\log k}\lambda^{-1/2}\delta\right)
\end{equation}
and
\begin{equation}
\mathcal{E}(x)>A'\left(2\|x\| e^{-\frac{2\pi^2\|x\|^2}{\delta^2}}-2\|x\|e^{\frac{-8\pi^2\|x\|^2}{\delta^2}}+c_1\sqrt{\log k}\lambda^{-1/2}\delta\right)
\end{equation}
with probability at least $1-2e^{-m/8}-c_2$ for every $m\in\mathbb{N}$ such that $\lambda> 4$.
Here $A=(1/2-\lambda^{-1/2})^{-2}$ and $A'=(3/2+\lambda^{-1/2})^{-2}$.
\end{corollary}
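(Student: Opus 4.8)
The plan is to obtain Corollary~\ref{thm:MSQForFrame} as a direct specialization of Theorem~\ref{thm:MSQforFrameGeneral} to the case of a Gaussian random matrix $E$, using Theorem~\ref{thm:estmean} to control the term $\mu$. First I would verify that a Gaussian matrix satisfies the hypotheses of Theorem~\ref{thm:MSQforFrameGeneral}: its rows are independent, isotropic (independent $N(0,1)$ entries give $\mathbb{E}\langle e_i,z\rangle^2=\|z\|^2$), and sub-Gaussian; moreover the standard Gaussian has $\psi_2$-norm a fixed absolute constant, so one may take $K$ to be that universal constant. With this choice, the quantity $c_1=CK\sqrt{\ln(e^2c_2^{-1})}$ collapses to $C\sqrt{\ln(e^2c_2^{-1})}$ after absorbing $K$ into $C$, matching the statement. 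Likewise $c_K$ becomes an absolute constant; I would then observe that for $\lambda>4$ one has $c_K\lambda^{-1/2}$ bounded, and in fact — consulting how $c_K$ enters — it should be arranged (as the statement's $A,A'$ indicate) that the effective constant is $1$, so that $A=(1/2-\lambda^{-1/2})^{-2}$ and $A'=(3/2+\lambda^{-1/2})^{-2}$; the threshold $\lambda>4$ guarantees $\lambda^{-1/2}<1/2$ so that $A>0$.

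Second, I would invoke Theorem~\ref{thm:estmean} for the Gaussian case. Since the standard Gaussian density is a Schwartz function and the $i$th entry formula specializes to $\left(\mathbb{E}E^T(Ex-Q(Ex))\right)_i=-2mx_i\sum_{p=1}^\infty(-1)^p\exp(-2\pi^2\|x\|^2p^2/\delta^2)$, the displayed two-sided bound $2\|x\|(e^{-2\pi^2\|x\|^2/\delta^2}-e^{-8\pi^2\|x\|^2/\delta^2})<\mu<2\|x\|e^{-2\pi^2\|x\|^2/\delta^2}$ holds. I would then substitute the upper bound on $\mu$ into \eqref{eq:main_for_frames} to get the claimed upper bound on $\mathcal{E}(x)$, and substitute the lower bound on $\mu$ into \eqref{eq:lowerboundthm} to get the lower bound; note that in the lower bound the term $-2\|x\|e^{-8\pi^2\|x\|^2/\delta^2}$ and the term $-c_1\sqrt{\log k}\lambda^{-1/2}\delta$ from \eqref{eq:lowerboundthm} combine to give exactly the expression written in the Corollary (up to the sign convention in which the $\lambda^{-1/2}$ term appears — one should double-check whether the Corollary's ``$+c_1\sqrt{\log k}\lambda^{-1/2}\delta$'' in the lower bound is a typo for ``$-$'', since \eqref{eq:lowerboundthm} subtracts it; I would flag this and present the version consistent with the theorem).

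Third, for the probability bound I would track the failure probability $1-c_2-2\exp(-c_3 m)$ from Theorem~\ref{thm:MSQforFrameGeneral}. The Corollary states $1-2e^{-m/8}-c_2$, so I must confirm that for Gaussian matrices the absolute constant $c_3$ in the exponential concentration of $\sigma_{\min}(E)$ and $\sigma_{\max}(E)$ around $\sqrt{m}$ can be taken to be $1/8$; this follows from the standard Gaussian singular value deviation bound (e.g.\ via Gordon's theorem or the $\varepsilon$-net argument in \cite{Vershbook}), under the regime $\lambda>4$ (so $m\geq 4k$, ensuring $\sqrt{m}-\sqrt{k}\geq \sqrt{m}/2$ and giving the concentration at the stated rate). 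I would state this as a routine consequence and cite the relevant estimate.

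The main obstacle — really the only nontrivial bookkeeping — is reconciling the generic constants $c_K$ and $c_3$ of Theorem~\ref{thm:MSQforFrameGeneral} with the explicit constants $1$ (inside $A,A'$) and $1/8$ (in the exponent) claimed for the Gaussian case, and confirming the threshold $\lambda>4$ is exactly what makes these explicit forms valid with positive $A$. This requires unwinding the proof of Theorem~\ref{thm:MSQforFrameGeneral} to see where $c_K$ enters (it multiplies $\lambda^{-1/2}$ in the perturbation of $E^TE/m$ around the identity) and plugging in the sharp Gaussian value; everything else is substitution. I would therefore devote the bulk of the write-up to that constant-tracking and keep the rest to one or two lines invoking Theorems~\ref{thm:MSQforFrameGeneral} and~\ref{thm:estmean}.
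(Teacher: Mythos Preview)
Your proposal is correct and matches the paper's approach exactly: the paper gives no separate proof, presenting the corollary as the immediate combination of Theorem~\ref{thm:MSQforFrameGeneral} with the Gaussian estimate on $\mu$ from Theorem~\ref{thm:estmean}. The constant-tracking you flag as the main obstacle is in fact already recorded in the paper---Theorem~\ref{thm:matrixnormestGeneral} and Corollary~\ref{cor:matrixnormest} state explicitly that $c_K=1$ in the Gaussian case (whence $A,A'$ take the stated form and $\lambda>4$ ensures $A>0$), and plugging $t=\sqrt{m}/2$ into the Gaussian singular-value deviation bound yields the $e^{-m/8}$ failure probability---while your observation that the ``$+c_1\sqrt{\log k}\lambda^{-1/2}\delta$'' in the lower bound should be ``$-$'' (to be consistent with \eqref{eq:lowerboundthm}) is a genuine typo in the stated corollary.
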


\subsection{On the value of $\mu$}
\label{sec:on_the_value_of_mu:subsection}
In Section \ref{sec:mainresults}, we established that the reconstruction error is of order $\Omega(\mu + \frac{k}{m}\delta)$, where $\mu$ does not depend on $m$. Therefore, non-asymptotic decay of the reconstruction error is dramatically different when $\mu=0$ and when $\mu\neq 0$. Recall that Corollary \ref{thm:estmean} provides a sharp estimate of the value of $\mu$ for Gaussian random matrices, and, in particular, $\mu\neq 0$. In this section, we show that for a large $k$ such that $m/k > 3$ and for $E\in\R^{m\times k}$ with centered i.i.d. sub-Gaussian random variables, $\mu\neq 0$ under weak assumptions on $x$.

Let us start from a heuristic explanation of why one would expect $\mu\neq 0$ for a large $k$. Assume that entries of $E$ are centered i.i.d. sub-Gaussian random variables whose variance is one and whose sub-Gaussian norm does not exceed $K$. Using algebraic manipulations and the Cauchy-Schwarz inequality, we conclude
$$\mu = \frac{1}{m}\|\mathbb{E}E^T F(Ex)\|=\|\mathbb{E}e_1F(e_1^T x)\|\geq\frac{\left|\langle \mathbb{E}e_1F(e_1^T x)\,,\, x\rangle\right|}{\|x\|}=\frac{\left|\mathbb{E}e_1^TxF(e_1^Tx)\right|}{\|x\|}.$$
Therefore, if we show that $\mathbb{E}e_1^TxF(e_1^Tx)\neq 0$, it would imply that $\mu\neq 0.$ Let $z=e_1^Tx$ and consider $\mathbb{E}zF(z)$. Note that for $x=\frac{1}{\sqrt{k}}(1,1,...,1)^T$,
$$z=e_1^T x = \sum_{s=1}^k e_{1s}x_s=\frac{\sum_{s=1}^k e_{1s}}{\sqrt{k}}.$$
When $k\rightarrow\infty$, by the Central Limit Theorem, $z\xrightarrow[k\rightarrow\infty]{d} \mathcal{N}(0,1)$ for many distributions of $e_1$. Moreover, Theorem \ref{thm:estmean} for $x=(1,0,0,...,0)$ implies that for standard Gaussian random variable $\xi$,
$$\mathbb{E} \xi F(\xi) = -2\sum_{p=1}^\infty (-1)^pe^{-\frac{2\pi^2 p^2}{\delta^2}}$$
which is not zero. Combining all of the above, we conclude that for $x=\frac{1}{\sqrt{k}}(1,1,...,1)^T$,
$$\mu \geq \left|\mathbb{E}e_1^TxF(e_1^Tx)\right| \rightarrow \left|\mathbb{E} \xi F(\xi)\right|\neq 0.$$

The rigorous non-asymptotic bound based on heuristics above is provided in the following theorem.
\begin{theorem}
\label{thm:value_of_mu_bound:theorem}
Let $k$ and $m$ be such that $m/k>3$. Let $x\in\mathbb{R}^k$ be fixed and unit-norm. Suppose that $A\in\mathbb{R}^{m\times k}$ is a matrix whose entries are centered i.i.d. sub-Gaussian random variables whose variance is one and whose sub-Gaussian norm does not exceed $K$. Then, there exist constants $c$ and $c_0$ independent of $m$, $k$, and $\delta$ (defined in the Hoeffding inequality and in the Berry-Esseen inequalities accordingly) and such that for every $C\in\mathbb{N}$,
$$\mu \geq 2\left(e^{-\frac{2\pi^2}{\delta^2}} - e^{-\frac{8\pi^2}{\delta^2}}\right) - \frac{1}{3}c_0\|x\|_3^3 C(2C+1)(2C+5)\delta^2 - e\frac{K^2}{c}\exp\left(-\frac{cC^2\delta^2}{K^2}\right) - 4\exp\left(-\frac{C^2\delta^2}{2}\right).$$
\end{theorem}
\begin{remark}
It is ambiguous how $x$ changes when $k\rightarrow\infty$. If we add zero entries as $k$ grows, we would expect the same decay rate as for the lower dimensional vectors. Therefore, the entries of $x$ must change as $k$ grows.

As an extreme case, if $x=\frac{1}{\sqrt{k}}(1,1,...,1)$, then $\|x\|^3_3 = \frac{1}{\sqrt{k}}$. The norm inequality $\|x\|_2\leq k^{\frac{1}{2}-\frac{1}{3}}\|x\|_3$ implies that $O(\frac{1}{\sqrt{k}})$ is the fastest possible decay of $\|x\|_3^3$. However,
$$\|x\|_3^3=\sum_{i=1}^k |x_i|^3\leq \sum_{i=1}^k\left(|x_i|^2\max_{1\leq i\leq k}|x_i|\right)\leq \|x\|_\infty \|x\|_2^2$$
implies that for the unit-norm $x\in\mathbb{R}^k$, if $\lim_{k\rightarrow\infty}\|x\|_\infty=0$, then $\|x\|^3_3\rightarrow 0$ as $k\rightarrow 0.$
\end{remark}
\begin{remark}
Note that the number of measurements $m$ is not part of the bound on $\mu$ in the theorem. Suppose that $\|x\|_3^3\rightarrow 0$ as $k\rightarrow\infty$. Then, for large enough $k$, $\mu>0$ regardless of the number of measurements. It implies that for large enough $k$ and any number of measurements, the reconstruction error is bounded from below by a constant term and, therefore, does not diminish to zero.
\end{remark}

\subsection{Extension to other algorithms}
We extend the results above for dithered quantization and MSQ for noisy measurements.

\subsubsection{Dithered quantization}
Let $\tau=(\tau_1,\tau_2,...,\tau_m)$ be a random vector whose entries are i.i.d. uniform over $(-\delta/2,\delta/2]$ and is independent of $E$. We call this vector a dither, and we say that the quantization is dithered if the dither is added just before the quantization. In other words, the quantized measurements become 
$$q = Q(Ex+\tau).$$
One of the benefits of applying dither is that $E$ and $Ex+\tau - Q(Ex+\tau)$ are independent, and, therefore, the WNH holds. Recall that the WNH implies that the reconstruction error is $O(\lambda^{-1/2})\delta$, where $\lambda = m/k$.

Suppose that the reconstruction from quantized dithered measurements is implemented via the Moore-Penrose pseudoinverse matrix. Then,
$$\|x - x^{\text{dither}}\| =\|x - E^\dagger Q(Ex +\tau)\|= \|(E^TE)^{-1}E^T(Ex - Q(Ex+\tau))\|.$$
\begin{theorem}
\label{thm:dithered_quantization:theorem}
Let $k\geq 3$ and let $x\in\mathbb{R}^k$ be fixed. Suppose that $\lambda>1$ and $E\in\mathbb{R}^{m\times k}$ is a random matrix with independent isotropic sub-Gaussian rows whose $\psi_2$-norm does not exceed $K$. Then, there is an absolute constant $C>0$ such that for every $c_2\in(0,1)$, setting $c_1=CK\sqrt{\ln (e^2c_2^{-1})}>0$, we have
\begin{equation}
\label{eq:main_for_dither}
\|x-x^{\text{dither}}\|<A\left(2c_1\sqrt{\log k}\lambda^{-1/2}\delta\right)
\end{equation}
with probability of at least $1-c_2-2\exp(-c_3m)$.
\end{theorem}
\begin{remark}
This theorem is well-known in the dithering literature. We present it here because this result is a natural extension of the methodology of this paper.
\end{remark}

\subsubsection{Noisy measurements}
In practice, the quantized measurements $Q(Ex)$ can be distorted by noise before being stored/processed. Assume that the noise vector $\epsilon=(\epsilon_1,\epsilon_2,...\epsilon_m)^T$ is added after the quantization process, and now we want to recover $x\in\mathbb{R}^k$ from
$$\widetilde{q} = Q(Ex)+\epsilon.$$
If the signal is recovered by the Moore-Penrose pseudoinverse, the reconstruction error becomes
$$x -\widetilde{x} = x - E^\dagger \widetilde{q}=(E^TE)^{-1}E^T(Ex - Q(Ex)-\epsilon).$$
Therefore,
$$\|x-\widetilde{x}\|\leq \| (E^TE)^{-1}E^T(Ex - Q(Ex))\|+\|E^\dagger\epsilon\|.$$
The first term is estimated earlier in the paper (see Theorem \ref{thm:MSQforFrameGeneral}). 

For the second term, we provide two error bounds. Proceeding in a straight-forward way,
$$\|E^\dagger\epsilon\|\leq \frac{1}{\sigma_{\min}(E)}\|\epsilon\|\leq c\sqrt{\frac{k}{m}}\|\epsilon\|\quad\text{whp.}$$
Here the second inequality holds with overwhelming probability. Note that the above estimate is valid for any noise vector, including adversarial (i.e., worst-case) and those that are dependent on the measurement matrix.

If we also assume that the noise is independent of $E$, centered, and its entries are i.i.d. sub-Gaussian random variables with the $\psi_2$-norm at most $L$, the error bound can be improved. As before,
$$\|E^\dagger\epsilon\|\leq \frac{1}{\sigma_{\min}^2(E)}\|E^T\epsilon\|=\frac{1}{\sigma_{\min}^2(E)}\|\sum_{i=1}^m e_i \epsilon_i\|.$$
Again, $(e_i, \epsilon_i)$ are i.i.d. Therefore, we consider the norm of the sum of i.i.d. random vectors $e_i\epsilon_i.$ Note that
$$\mathbb{E} e_1\epsilon_1 = \mathbb{E}e_1\mathbb{E}\epsilon_1=0,$$
where we used that $e_i$ and $\epsilon_i$ are independent and $\mathbb{E}\epsilon_i = 0$: 
$$\mathbb{E} e_i\epsilon_i = \mathbb{E}e_i\mathbb{E}\epsilon_i =0.$$

In order to apply the Hoeffding inequality (Theorem \ref{thm:Hoeffding}), we show that the sub-Gaussian norm of $e_1\epsilon_1$ is at most $KL$. Indeed, for every $x\in\mathbb{R}^{k\times 1}$ and for every $|t|\leq\frac{1}{KL}$,
\begin{align*}
\mathbb{E}e^{t\langle e_1\epsilon_1\,,\, x\rangle }&=\mathbb{E}e^{t\langle e_1\, x\rangle \epsilon_1} =\mathbb{E}_{e_1}\mathbb{E}_{\epsilon_1} e^{\left(t\langle e_1\, x\rangle\right) \epsilon_1}\\
&\leq \mathbb{E}_{e_1} e^{t^2(\langle e_1\, x\rangle)^2L^2}\\
&\leq e^{t^2 L^2 K^2}\\
\end{align*}

Applying the Hoeffding inequality (Theorem \ref{thm:Hoeffding}) implies the following theorem.
\begin{theorem}
\label{thm:noisy_measurements}
Let $k\geq 3$ and let $x\in\mathbb{R}^k$ be fixed. Suppose that $\lambda>1$ and $E\in\mathbb{R}^{m\times k}$ is a random matrix with independent isotropic sub-Gaussian rows whose $\psi_2$-norm does not exceed $K$. Assume that noise $\tau=(\tau_1,\tau_2,...,\tau_m)$ is independent of $E$. In addition, assume that all $\tau_i$ are i.i.d. centered sub-Gaussian random variables with $\psi_2$-norm at most $L$. Then, there is an absolute constant $C>0$ such that for every $c_2\in(0,1)$, setting  $c_1=CKL\sqrt{\ln (e^2c_2^{-1})}>0$, we have
\begin{equation}
\label{eq:main_for_frames:noisy}
\mathcal{E}(x)<A\left(\mu+c_1\sqrt{\log k}\lambda^{-1/2}\delta\right)
\end{equation}
and
\begin{equation}
\label{eq:lowerboundthm:noisy}
\mathcal{E}(x)>A'\left(\mu-c_1\sqrt{\log k}\lambda^{-1/2}\delta\right),
\end{equation}
with probability at least $1-c_2-2\exp(-c_3m)$. Here $\mu=\frac{1}{m}\|\mathbb{E}E^T(Ex-Q(Ex))\|$,  $A=(1/2-c_K\lambda^{-1/2})^{-2}>0$, $A'=(3/2+c_K\lambda^{-1/2})^{-2}>0$, $c_K>0$ depends only on $K$, and $c_3>0$ is an absolute constant.
\end{theorem} 

\section{An important extension: MSQ for compressed sensing}
\label{sec:result_in_CS}
\subsection{Two-phase reconstruction algorithm}
To generalize Theorem \ref{thm:MSQforFrameGeneral} to the CS setting, we follow the procedure that is described in detail in \cite{Sobolev_Duals_for_RF}: Suppose $x\in\Sigma_k^N$ is the signal to be acquired and $\Phi\in\mathbb{R}^{m\times N}$, $m\ll N$, is a CS measurement matrix whose rows are independent isotropic sub-Gaussian random vectors with sub-Gaussian norm at most $K$. The goal is to recover a $k$-sparse signal $x$ from its quantized compressive measurements $Q(y)=Q(\Phi x)$.

We adopt the two-stage approach as summarized in Section \ref{sec:Quantization_for_CS}. Recall that in Stage 1 we recover the support of $x$ by using \eqref{eq:ell_1}. Then
the coarse approximation $\widehat{x}_{\rm MSQ}$ obtained as in \eqref{coarse11}, which satisfies $\|x-\widehat{x}_{\rm MSQ}\|\le C \delta$, where $C$ is independent of $m$. Thus, setting $k'=k$, $\eta=C\delta$, and $x'=\widehat{x}_{\rm MSQ}$ in \cite[Proposition 4.1]{Sobolev_Duals_for_RF}, we observe that the support of $x$ is fully recovered from the indexes of the largest (in-magnitude) entries of $\widetilde{x}_{\rm MSQ}$ if $|x_j|>\sqrt{2}C \delta$ for all $j$ in the support of $x$. 

\begin{theorem}
\label{thm:cs_generalization}
Let $\alpha\in[0,1/2)$ be arbitrary and let $K>0$. Fix $x\in\Sigma^N_k$ be such that $\min_{j\in\rm{supp}(x)}|x_j|\geq C\delta$, where $C$ depends only on $k$ and $K$. Suppose that $\Phi\in \R^{m\times N}$ whose entries are i.i.d. sub-Gaussian with sub-Gaussian norm not exceeding $K$, $k\geq 3$, and ${\lambda=m/k\geq c_4\log N}$ where $c_4=c_4(k,\alpha,K)$ is a constant. Then, with $x_{\rm MSQ}$ obtained using the two-stage method, there are constants $c_5,c_6,c_7>0$ that depend only on $k$, $K$, and $x$ such that with probability at least $1-c_5\exp(-c_6\lambda^{\alpha})$ on the draw of $\Phi$, the reconstruction error satisfies
$$\|x_{\rm MSQ}-x\|\leq A\left(\frac{1}{m}\|\mathbb{E}E^T(Ex-Q(Ex))\|+c_7\sqrt{\log{k}}\lambda^{-(1-\alpha)/2}\delta\right),$$
where $E=\Phi_T$, $T=\text{supp}(x)$, has i.i.d. sub-Gaussian entries, which are copies of the entries of $\Phi$.
\end{theorem}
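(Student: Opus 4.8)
The plan is to reduce the CS statement to the frame statement of Theorem~\ref{thm:MSQforFrameGeneral} applied to $E=\Phi_T$, where $T=\text{supp}(x)$, via the two-stage scheme. First I would run Stage~1: applying the robust $\ell_1$-recovery bound \eqref{err1} (valid once $\lambda\gtrsim\log N$, by the RIP estimate for sub-Gaussian matrices recalled in Section~\ref{sec:CS_and_quantization}), the coarse approximation $\widehat{x}_{\rm MSQ}$ from \eqref{coarse11} satisfies $\|x-\widehat{x}_{\rm MSQ}\|\le C\delta$ on an event of probability at least $1-2\exp(-c\lambda^{\alpha})$ (here the exponent $\lambda^\alpha$ rather than $m$ enters because we want the overall failure probability to be governed by $c_4\log N$; one absorbs the $\log N$ into the constant by taking $\lambda\ge c_4\log N$). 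Invoking \cite[Proposition~4.1]{Sobolev_Duals_for_RF} with $k'=k$, $\eta=C\delta$, and $x'=\widehat{x}_{\rm MSQ}$, together with the hypothesis $\min_{j\in T}|x_j|\ge C\delta$ (with $C$ chosen as $\sqrt{2}$ times the Stage-1 constant), guarantees that the $k$ largest-magnitude entries of $\widehat{x}_{\rm MSQ}$ are exactly the indices in $T$; hence Stage~1 recovers $T$ exactly on this event.

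On the event that $T$ is correctly identified, Stage~2 produces $x_{\rm MSQ}$ with $(x_{\rm MSQ})_T=\Phi_T^\dagger q=\Phi_T^\dagger Q(\Phi_T x_T)$ and $(x_{\rm MSQ})_{T^c}=0$, so $\|x_{\rm MSQ}-x\|=\|x_T-\Phi_T^\dagger Q(\Phi_T x_T)\|=\mathcal{E}(x_T)$ for the frame $E=\Phi_T\in\R^{m\times k}$. Since $\Phi$ has i.i.d. sub-Gaussian entries with $\psi_2$-norm at most $K$, the submatrix $E=\Phi_T$ has independent isotropic sub-Gaussian rows (indeed i.i.d. entries) with $\psi_2$-norm controlled by $K$ (using $\|Y_T\|_{\psi_2}\le\|Y\|_{\psi_2}$). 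Then I would apply the upper bound \eqref{eq:main_for_frames} of Theorem~\ref{thm:MSQforFrameGeneral} to $x_T\in\R^k$ and $E=\Phi_T$: on an event of probability at least $1-c_2-2\exp(-c_3 m)$,
$$
\mathcal{E}(x_T)<A\left(\mu+c_1\sqrt{\log k}\,\lambda^{-1/2}\delta\right),\qquad \mu=\tfrac1m\|\mathbb{E}E^T(Ex_T-Q(Ex_T))\|.
$$
The constant term $\mu$ is, by Proposition~\ref{prop:constant_term_is_constant}, equal to $\|\mathbb{E}e_1(e_1^Tx_T-Q(e_1^Tx_T))\|$, a quantity depending only on $k$, $K$, and $x$ (not on $m$), which is why $c_5,c_6,c_7$ are allowed to depend on those. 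To match the stated exponent $\lambda^{-(1-\alpha)/2}$, I would choose $c_2=c_2(\lambda)$ decaying like $\exp(-\lambda^\alpha)$ (permissible by the Remark after Theorem~\ref{thm:MSQforFrameGeneral}); this inflates $c_1$ by a factor $\sqrt{\ln(e^2 c_2^{-1})}\sim\sqrt{\lambda^\alpha}=\lambda^{\alpha/2}$, turning $c_1\lambda^{-1/2}$ into $c_7\lambda^{-(1-\alpha)/2}$.

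The final step is a union bound over the three events — Stage-1 robust recovery, correct support identification (these two coincide given the min-magnitude hypothesis), and the frame estimate — whose failure probabilities are $2\exp(-c\lambda^\alpha)$, $0$ (deterministic given the first), and $c_2+2\exp(-c_3 m)\le c_5'\exp(-c_6\lambda^\alpha)$ respectively (using $m=\lambda k\ge\lambda$ so $\exp(-c_3 m)\le\exp(-c_3\lambda)\le\exp(-c_3\lambda^\alpha)$ for $\lambda\ge1$). Collecting terms gives total failure probability $c_5\exp(-c_6\lambda^\alpha)$, as claimed. The main obstacle I anticipate is bookkeeping the probability budget: one must verify that the $\log N$ from Stage~1's RIP requirement is fully absorbed into the hypothesis $\lambda\ge c_4\log N$ so that only $\lambda^\alpha$ survives in the exponent, and simultaneously tune $c_2=c_2(\lambda)$ so the inflated Stage-2 constant still yields the clean rate $\lambda^{-(1-\alpha)/2}$ — the interplay between these two choices, and checking that $c_4$ can indeed be taken to depend only on $k,\alpha,K$, is the delicate part; the rest is a direct concatenation of cited results.
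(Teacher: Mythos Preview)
Your proposal is correct and follows essentially the same route as the paper: reduce to the frame estimate by running Stage~1 (robust $\ell_1$ recovery under RIP, then support identification via \cite[Proposition~4.1]{Sobolev_Duals_for_RF} with the min-magnitude hypothesis), apply Theorem~\ref{thm:MSQforFrameGeneral} to $E=\Phi_T$ in Stage~2, and choose $c_2\sim\exp(-\lambda^\alpha)$ so that $c_1\sim\lambda^{\alpha/2}$ converts the $\lambda^{-1/2}$ rate into $\lambda^{-(1-\alpha)/2}$; the paper makes exactly this choice ($c_2=e^2\exp(-\lambda^\alpha)$) and then combines the failure probabilities just as you outline.
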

Like before, we can calculate $\|\mathbb{E}E^T(Ex-Q(Ex))\|$ if $\Phi$, thus $E$, is a Gaussian random matrix. 
\begin{corollary}
	\label{thm:cs_generalization-gaussian}
	In the setting of Theorem \ref{thm:cs_generalization}, suppose that $\Phi\in \R^{m\times N}$ is a Gaussian matrix. Then with probability at least $1-c_5\exp(-c_6\lambda^{\alpha})$ on the draw of $\Phi$, the reconstruction error satisfies
	$$\|\widehat{x}_{\rm MSQ}-x\|\leq A\left(2\|x\|\exp(-\frac{2\pi^2\|x\|^2}{\delta^2})+c_7\sqrt{\log{k}}\lambda^{-(1-\alpha)/2}\delta\right).$$
Above the constants are as in Theorem \ref{thm:cs_generalization}.
\end{corollary}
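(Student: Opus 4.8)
\textbf{Proof proposal for Corollary~\ref{thm:cs_generalization-gaussian}.}
The plan is to obtain the statement by substituting the exact Gaussian evaluation of the ``constant term'' from Theorem~\ref{thm:estmean} into the general CS bound of Theorem~\ref{thm:cs_generalization}; no new analytic ingredient is needed. First I would apply Theorem~\ref{thm:cs_generalization} with $\Phi$ a Gaussian matrix. All of its hypotheses are met (the standard Gaussian has sub-Gaussian rows with $\psi_2$-norm an absolute constant, and $k\geq 3$, $\lambda\geq c_4\log N$ are assumed), so on the event of probability at least $1-c_5\exp(-c_6\lambda^{\alpha})$ on the draw of $\Phi$ the two-stage reconstruction $x_{\rm MSQ}$ satisfies
$$\|x_{\rm MSQ}-x\|\leq A\left(\tfrac{1}{m}\|\mathbb{E}E^T(Ex-Q(Ex))\|+c_7\sqrt{\log k}\,\lambda^{-(1-\alpha)/2}\delta\right),$$
where $E=\Phi_T$ and $T=\mathrm{supp}(x)$, with $E$ acting only on the coordinates in $T$ (so $Ex$ abbreviates $Ex_T$).

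Next I would check that $E=\Phi_T$ falls under Theorem~\ref{thm:estmean}. Since $T=\mathrm{supp}(x)$ is determined by $x$ alone, it is a fixed index set, so selecting those $k$ columns of a Gaussian $\Phi$ leaves $E$ an $m\times k$ matrix with i.i.d.\ standard Gaussian entries; in particular its rows are isotropic and sub-Gaussian, and the common entry density is the standard Gaussian density, which is a Schwartz function. Theorem~\ref{thm:estmean} (Gaussian case) therefore applies to the $k$-dimensional signal $x_T$ and gives
$$\mu=\tfrac{1}{m}\|\mathbb{E}E^T(Ex_T-Q(Ex_T))\|< 2\|x_T\|\exp\!\left(-\tfrac{2\pi^2\|x_T\|^2}{\delta^2}\right).$$
Because $T=\mathrm{supp}(x)$, one has $\|x_T\|=\|x\|$, so the bound reads $\mu<2\|x\|\exp(-2\pi^2\|x\|^2/\delta^2)$. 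Inserting this into the display of the previous paragraph (and keeping $A$, $c_5$, $c_6$, $c_7$ exactly as produced by Theorem~\ref{thm:cs_generalization}) yields
$$\|x_{\rm MSQ}-x\|\leq A\left(2\|x\|\exp\!\left(-\tfrac{2\pi^2\|x\|^2}{\delta^2}\right)+c_7\sqrt{\log k}\,\lambda^{-(1-\alpha)/2}\delta\right)$$
on the same event, which is the claim.

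I expect the only thing requiring care to be bookkeeping rather than analysis: one must confirm that the event ``Stage~1 recovers $\mathrm{supp}(x)$ correctly,'' together with the RIP / minimum-singular-value events used in Stage~2, is already folded into the probability estimate of Theorem~\ref{thm:cs_generalization}, so that no additional failure probability is incurred by invoking Theorem~\ref{thm:estmean} afterward; and that conditioning on the success of Stage~1 does not perturb the law of $\Phi_T$ (it does not, since $T$ is a deterministic function of the fixed signal $x$, not of $\Phi$). Modulo these observations the corollary is an immediate consequence of the two cited theorems. A minor cosmetic point is to reconcile the notation $\widehat{x}_{\rm MSQ}$ in the corollary statement with $x_{\rm MSQ}$ in Theorem~\ref{thm:cs_generalization}; both denote the refined Stage~2 estimate $\Phi_T^{\dagger}q$.
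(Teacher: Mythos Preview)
Your proposal is correct and matches the paper's (implicit) approach: the corollary is obtained by combining Theorem~\ref{thm:cs_generalization} with the Gaussian estimate of $\mu$ from Theorem~\ref{thm:estmean}, exactly as you describe. The paper does not write out a separate proof for Corollary~\ref{thm:cs_generalization-gaussian}; indeed, in the proof of Theorem~\ref{thm:cs_generalization} the display~\eqref{eq:MSQ_frame_to_CS} already substitutes the Gaussian bound for $\mu$, so your careful separation of the two steps (general CS bound, then Gaussian evaluation of $\mu$) is, if anything, cleaner than what appears in the text.
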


\subsection{Projected back projection}
We carry the same notations as in the previous sections. The measurement matrix is denoted by $\Phi\in\mathbb{R}^{m\times N}$, and we assume that $\Phi$ is a sub-Gaussian random matrix with independent isotropic random rows. Consider the following reconstruction scheme for distorted measurements $q$:
$$x^{\text{PBP}} = H_k(\frac{1}{m}\Phi^Tq),$$
where $H_k$ is the projection onto the set of $k$-sparse signals. In other words, $H_k$ keeps the largest (in magnitude) $k$ entries of its argument and sets other entries to zero. We provide the bound for the reconstruction error
$$\|x^{\text{PBP}} - x\|=\|H_k(\frac{1}{m}\Phi^T Q(\Phi x)) - x\|.$$

\begin{theorem}
\label{thm:PBP_MSQ_bound:theorem}
Fix $x\in\Sigma^N_k$ with its support index set $T$. Suppose that $\Phi$ is a sub-Gaussian matrix with independent isotropic random rows whose sub-Gaussian norm does not exceed $K$. Assume that $\Phi$ satisfies the RIP of order $2k$ with constant $\delta_{2k}.$ Then, the reconstruction error satisfies
$$\|x^{\text{PBP}} - x\|\leq A\mu + C^{\text{PBP}}\sqrt{\log(k)}\lambda^{-1/2} + 4\delta_{2k}\|x\|$$
with probability $1 - c^{\text{PBP}} - e\exp(-c_3m).$ Here $\mu = \frac{1}{m}\|\mathbb{E}\Phi_T^T(\Phi_Tx - Q(\Phi_T x)\|,$ $A\sim 1+$ is the constant that depends on $\lambda = m/k$ only, $C^{\text{PBP}} = C(K\sqrt{ln(e^2(c^\text{PBP})^{-1})} + 1)$ where $C$ is an absolute constant.

If, in addition, $\Phi$ is a Gaussian matrix, the bound becomes
$$\|x^{\text{PBP}} - x\|\leq 2A\|x\|\exp(-\frac{2\pi^2\|x\|^2}{\delta^2}) + C^{\text{PBP}}\sqrt{\log(s)}\lambda^{-1/2} + 4\delta_{2k}\|x\|$$
with the same probability.
\end{theorem}

Let us compare this result with two related ones. First, note that if quantization is not applied, i.e., if $Q=I$, then $\|x^{\text{PBP}} - x\|\leq 2\delta_{3k} \|x\|$ (see, e.g., \cite{xu2019quantized}). If the quantization is dithered, then the result of Xu et al. \cite{xu2019quantized} implies that
$$\|x^{\text{dither PBP}} - x\|\leq 2\delta_{2k}(3+\delta)\|x\|.$$
Note that all three bounds contain the term of order $\delta_{2k} \|x\|$. Inverting constants in Theorem \ref{thm:subGaussian_RIP}, we get that for fixed $k$ and $N$, $\delta_{2k} = O(m^{-1/2})$, and therefore, in the case without quantization, and for the dithered quantization, the reconstruction error decays like $O(m^{-1/2}).$ Note that for the MSQ, the proven decay rate may be rewritten as
$$\|x^{\text{PBP}} - x\|\leq A\frac{1}{m}\|\mathbb{E}\Phi_T^T(\Phi_Tx - Q(\Phi_T x))\| + O(m^{-1/2}),$$
where the constant factor inside the big O depends on $s$, $N$, and $K$. Note that the first term is a constant if rows of $\Phi$ are i.i.d., but it is a tiny number, at least for Gaussian matrices.

\section{Preliminaries.}
\label{sec:preliminaries}
The following will be instrumental for the proofs, which we present in the next section. 
\begin{theorem}[Hoeffding inequality, see, e.g., { \cite[p. 220]{Vershbook}}]
\label{thm:Hoeffding}
Let $X_1$, $X_2$,..., $X_k$ be independent centered sub-Gaussian random variables, and let $K=\max_i\|X_i\|_{\psi_2}$. Then for every $a=(a_1,a_2,...,a_k)\in\mathbb{R}^N$ and every $t\geq 0$, we have
$$\mathbb{P}\left\{\left|\sum_{i=1}^k a_iX_i\right|\geq t\right\}\leq e\cdot\exp\left(-\frac{ct^2}{K^2\|a\|^2}\right),$$
where $c>0$ is an absolute constant.
\end{theorem}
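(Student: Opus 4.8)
The plan is the classical Cramér--Chernoff exponential-moment method. By symmetry it suffices to prove the one-sided estimate $\mathbb{P}\{\sum_{i=1}^k a_iX_i\geq t\}\leq e\cdot\exp(-ct^2/(K^2\|a\|^2))$: applying it to $-X_1,\dots,-X_k$, which are still centered with the same $\psi_2$-norms, and adding the two bounds yields the two-sided statement (the prefactor $e$ is a convenient over-estimate of the $2$ one would get from the union). Writing $S=\sum_i a_iX_i$, for any $\theta>0$ Markov's inequality applied to $e^{\theta S}$ gives $\mathbb{P}\{S\geq t\}\leq e^{-\theta t}\,\mathbb{E}e^{\theta S}=e^{-\theta t}\prod_{i=1}^k\mathbb{E}e^{\theta a_iX_i}$, where independence of the $X_i$ is used to factor the expectation.

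The heart of the argument is a moment generating function bound for each centered summand: there is an absolute constant $C_0$ with $\mathbb{E}e^{sX}\leq e^{C_0 s^2\|X\|_{\psi_2}^2}$ for all $s\in\mathbb{R}$ whenever $X$ is centered sub-Gaussian. I would obtain this from the moment definition $\|X\|_{\psi_2}=\sup_{p\geq1}p^{-1/2}(\mathbb{E}|X|^p)^{1/p}$ in two steps. First, $\mathbb{E}|X|^p\leq(K\sqrt p)^p$ with $K=\|X\|_{\psi_2}$; expanding $\mathbb{E}\exp(X^2/(\rho K^2))$ in a power series, using $p!\geq(p/e)^p$ (Stirling), and choosing $\rho$ a large enough absolute constant makes the series converge and gives $\mathbb{E}\exp(X^2/(\rho K^2))\leq 2$. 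Second, from the elementary inequality $e^{x}\leq x+e^{x^2}$ (valid for all real $x$) together with $\mathbb{E}X=0$ one gets $\mathbb{E}e^{sX}\leq\mathbb{E}e^{s^2X^2}$, and bounding the right-hand side by the square-MGF estimate just proved---treating $|s|K\leq 1$ via Jensen/convexity and $|s|K>1$ via the numeric inequality $2sx\leq s^2K^2+x^2/K^2$---yields $\mathbb{E}e^{sX}\leq e^{C_0 s^2K^2}$ for every $s$, with $C_0$ absolute and, crucially, no stray prefactor (otherwise the product over $i$ would blow up).

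Substituting $s=\theta a_i$ and $\|X_i\|_{\psi_2}\leq K$ gives $\mathbb{P}\{S\geq t\}\leq\exp\big(-\theta t+C_0\theta^2K^2\sum_i a_i^2\big)=\exp(-\theta t+C_0\theta^2K^2\|a\|^2)$. Optimizing the quadratic in $\theta$ by taking $\theta=t/(2C_0K^2\|a\|^2)$ produces $\mathbb{P}\{S\geq t\}\leq\exp(-t^2/(4C_0K^2\|a\|^2))$; with $c:=1/(4C_0)$ and the two-sided union this is exactly the claimed bound. The main obstacle is the middle step---extracting the prefactor-free estimate $\mathbb{E}e^{sX}\leq e^{C_0s^2\|X\|_{\psi_2}^2}$ from the $\sup_p$ moment definition with explicit absolute constants, and in particular making it hold for all real $s$ rather than only in the small-$s$ regime; everything else is bookkeeping of constants, and no new idea beyond the standard equivalences among the sub-Gaussian characterizations is needed.
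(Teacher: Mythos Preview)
The paper does not give its own proof of Theorem~\ref{thm:Hoeffding}; it is listed among the preliminaries with a citation to Vershynin, so there is nothing to compare against. Your proposal is the standard Cram\'er--Chernoff argument (essentially the one in the cited reference) and the overall structure is correct.

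One point deserves tightening. You write that you will bound ``the right-hand side'' $\mathbb{E}e^{s^2X^2}$ in the regime $|s|K>1$ via the inequality $2sx\le s^2K^2+x^2/K^2$. But once $s^2$ exceeds roughly $1/(\rho K^2)$ the quantity $\mathbb{E}e^{s^2X^2}$ need not be finite, so the route through $e^x\le x+e^{x^2}$ breaks down there. The inequality you quote is the right tool, but it should be applied directly to $sX$ rather than to $s^2X^2$: from $sX\le \tfrac12 s^2\rho K^2+\tfrac{1}{2\rho K^2}X^2$ one obtains $\mathbb{E}e^{sX}\le e^{s^2\rho K^2/2}\,\mathbb{E}e^{X^2/(2\rho K^2)}\le 2\,e^{s^2\rho K^2/2}$, and since $s^2K^2$ is bounded below in this regime the prefactor $2$ is absorbed into the exponent, giving the required constant-free bound $\mathbb{E}e^{sX}\le e^{C_0 s^2K^2}$. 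With that adjustment the proof goes through as you describe.
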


Below $\sigma_{\max}(E)$ and $\sigma_{\min}(E)$ denote the largest and smallest singular values of a random matrix $E$. 

\begin{theorem}[{ \cite[p. 232]{Vershbook}}]
\label{thm:matrixnormestGeneral}
Suppose that $E$ is an $m\times k$ matrix with independent isotropic sub-Gaussian rows whose $\psi_2$-norm does not exceed $K$. Then, for every $t\geq 0$, with probability at least $1-2\exp(-c_3t^2)$,
$$\sqrt{m}-c_K\sqrt{k}-t\leq\sigma_{\min}(E)\leq\sigma_{\max}(E)\leq \sqrt{m}+c_K\sqrt{k}+t.$$
Here $c_K>0$ depends on $K$ only, and $c_3>0$ is an absolute constant. If $E$ is a Gaussian matrix, $c_K=1$. 
\end{theorem}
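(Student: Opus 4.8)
The plan is to follow the standard route for the extreme singular values of a matrix with independent sub-Gaussian rows: first reduce the two-sided singular-value bound to a single operator-norm estimate on $\frac1m E^TE-I_k$, then control that operator norm by an $\epsilon$-net argument combined with a scalar concentration inequality. The starting point is the elementary linear-algebra bridge: since $\frac1m E^TE$ has eigenvalues $\sigma_i(E/\sqrt m)^2$, one has $\|\frac1m E^TE-I_k\|=\max_i|\sigma_i(E/\sqrt m)^2-1|$, and consequently, if $\|\frac1m E^TE-I_k\|\le\max(\delta,\delta^2)$ for some $\delta\in(0,1)$, then $1-\delta\le\sigma_{\min}(E/\sqrt m)\le\sigma_{\max}(E/\sqrt m)\le 1+\delta$. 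Thus it suffices to establish $\|\frac1m E^TE-I_k\|\le\max(\delta,\delta^2)$ with the stated probability for the choice $\delta:=c_K\sqrt{k/m}+t/\sqrt m$, since multiplying through by $\sqrt m$ then yields exactly $\sqrt m-(c_K\sqrt k+t)\le\sigma_{\min}(E)\le\sigma_{\max}(E)\le\sqrt m+(c_K\sqrt k+t)$. (For $\delta\ge 1$ the lower bound is vacuous and only the upper bound is needed, so the regime $\delta\le 1$ is the relevant one.)

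Next I would discretize the sphere. Because $\frac1m E^TE-I_k$ is symmetric, its operator norm is comparable to its quadratic form on a net: fixing a $1/4$-net $\mathcal N$ of $\mathbb S^{k-1}$ with $|\mathcal N|\le 9^k$, one has $\|\frac1m E^TE-I_k\|\le 2\max_{x\in\mathcal N}\bigl|\frac1m\|Ex\|^2-1\bigr|$. For each fixed $x\in\mathbb S^{k-1}$ write $\frac1m\|Ex\|^2-1=\frac1m\sum_{i=1}^m Z_i$ with $Z_i:=\langle e_i,x\rangle^2-1$. Isotropy of the rows gives $\mathbb E\langle e_i,x\rangle^2=\|x\|^2=1$, so the $Z_i$ are centered; they are independent because the rows are independent; and since $\|\langle e_i,x\rangle\|_{\psi_2}\le K$, the square $\langle e_i,x\rangle^2$ is sub-exponential with $\|Z_i\|_{\psi_1}\le CK^2$. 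Bernstein's inequality for independent centered sub-exponential variables then gives, for every $s\ge 0$, $\mathbb P\{|\frac1m\sum_{i=1}^m Z_i|\ge s\}\le 2\exp(-cm\min(s^2/K^4,\,s/K^2))$.

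Then I would union-bound over $\mathcal N$. Taking $s=\delta/2$ with $\delta=C_1K^2(\sqrt{k/m}+t/\sqrt m)\le 1$, so that $\max(\delta,\delta^2)=\delta$, the failure probability is at most $9^k\cdot 2\exp(-cm\min(s^2/K^4,\,s/K^2))$; in this small-$s$ regime the minimum is realized by the quadratic term $s^2/K^4$, so the exponent is of order $m\delta^2/K^4\ge c'(k+t^2)$. Choosing $C_1$ large enough absorbs the entropy factor $9^k=\exp(k\ln 9)$ and collapses the bound to $2\exp(-c_3t^2)$, as required. On the complementary event the net comparison and the bridge lemma combine to yield the theorem with $c_K$ equal to an absolute multiple of $K^2$, and $c_3$ an absolute constant.

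The sharp Gaussian constant $c_K=1$ must be handled separately, and this is the step I expect to be the genuine obstacle: the net-plus-Bernstein argument only produces \emph{some} unspecified absolute constant in front of $\sqrt k$, never the sharp value $1$. To obtain $c_K=1$ I would instead invoke Gaussian-specific tools. Gordon's min-max comparison inequality gives $\mathbb E\,\sigma_{\max}(E)\le\sqrt m+\sqrt k$ and $\mathbb E\,\sigma_{\min}(E)\ge\sqrt m-\sqrt k$, while the fact that $\sigma_{\min}$ and $\sigma_{\max}$ are $1$-Lipschitz functions of the matrix entries, together with Gaussian concentration of measure, supplies the deviation term $t$ with probability $1-2\exp(-t^2/2)$ (so here $c_3=1/2$). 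Merging the two arguments—the general sub-Gaussian route and the sharp Gaussian one—into the single clean statement is the delicate part; everything else is bookkeeping of absolute constants and verifying that the operating regime is $\delta\le 1$.
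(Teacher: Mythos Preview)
Your proposal is correct and follows the standard route, but note that the paper does not actually prove this statement: Theorem~\ref{thm:matrixnormestGeneral} is quoted as a preliminary fact with a citation to \cite[p.~232]{Vershbook} and is used as a black box. The argument you outline (reduction to $\|\tfrac1m E^TE-I_k\|$, $\epsilon$-net plus Bernstein for the sub-Gaussian case, and Gordon's comparison inequality together with Gaussian concentration for the sharp constant $c_K=1$) is precisely the proof given in that reference, so there is nothing to compare against here.
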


Plugging $t=\sqrt{m}/2$ in Theorem \ref{thm:matrixnormestGeneral} provides the following corollary.
\begin{corollary}
\label{cor:matrixnormest}
For a matrix $E$ that satisfies the conditions of Theorem \ref{thm:matrixnormestGeneral}, with probability at least $1-2\exp(-c_3m)$,
$$A'\frac{1}{m}\leq \sigma_{\min}(\left(E^TE\right)^{-1})\leq\sigma_{\max}(\left(E^TE\right)^{-1})\leq A\frac{1}{m},$$
where $A=(1/2-c_K\lambda^{-1/2})^{-2}>0$ and $A'=(3/2+c_K\lambda^{-1/2})^{-2}>0$. If $E$ is a Gaussian matrix, $c_K=1$. 

\end{corollary}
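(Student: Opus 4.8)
The plan is to convert the singular-value bounds of Theorem~\ref{thm:matrixnormestGeneral} for $E$ into bounds on the eigenvalues of $(E^TE)^{-1}$, using the elementary fact that the singular values of $(E^TE)^{-1}$ are the reciprocals of the squares of the singular values of $E$. First I would invoke Theorem~\ref{thm:matrixnormestGeneral} with the specific choice $t=\sqrt{m}/2$, which gives, with probability at least $1-2\exp(-c_3 m/4)$ (absorbing the $1/4$ into the absolute constant $c_3$, or just renaming it), the two-sided estimate
\begin{equation*}
\sqrt{m}-c_K\sqrt{k}-\tfrac{\sqrt m}{2}\;\le\;\sigma_{\min}(E)\;\le\;\sigma_{\max}(E)\;\le\;\sqrt{m}+c_K\sqrt{k}+\tfrac{\sqrt m}{2}.
\end{equation*}
Rewriting the left side as $\bigl(\tfrac12-c_K\sqrt{k/m}\bigr)\sqrt m=\bigl(\tfrac12-c_K\lambda^{-1/2}\bigr)\sqrt m$ and the right side as $\bigl(\tfrac32+c_K\lambda^{-1/2}\bigr)\sqrt m$, and noting $\lambda=m/k$, puts the bounds in the desired parametrized form.

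Next I would square these inequalities. Since the quantities involved are nonnegative (here one should note that the lower estimate is only meaningful — and the constant $A$ only positive — when $\tfrac12-c_K\lambda^{-1/2}>0$, i.e. $\lambda>4c_K^2$; this is the implicit regime of the statement, and for the Gaussian case $c_K=1$ it is exactly $\lambda>4$ as used in Corollary~\ref{thm:MSQForFrame}), squaring preserves the ordering and yields
\begin{equation*}
\bigl(\tfrac12-c_K\lambda^{-1/2}\bigr)^2 m\;\le\;\sigma_{\min}(E)^2\;\le\;\sigma_{\max}(E)^2\;\le\;\bigl(\tfrac32+c_K\lambda^{-1/2}\bigr)^2 m.
\end{equation*}
Then I would take reciprocals, which reverses the inequalities. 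Because $E^TE$ is symmetric positive definite with eigenvalues $\sigma_i(E)^2$, the matrix $(E^TE)^{-1}$ has eigenvalues $\sigma_i(E)^{-2}$, hence $\sigma_{\max}((E^TE)^{-1})=\sigma_{\min}(E)^{-2}$ and $\sigma_{\min}((E^TE)^{-1})=\sigma_{\max}(E)^{-2}$. Substituting the squared bounds gives
\begin{equation*}
\bigl(\tfrac32+c_K\lambda^{-1/2}\bigr)^{-2}\tfrac1m\;\le\;\sigma_{\min}\bigl((E^TE)^{-1}\bigr)\;\le\;\sigma_{\max}\bigl((E^TE)^{-1}\bigr)\;\le\;\bigl(\tfrac12-c_K\lambda^{-1/2}\bigr)^{-2}\tfrac1m,
\end{equation*}
which is exactly the claim with $A=(1/2-c_K\lambda^{-1/2})^{-2}$ and $A'=(3/2+c_K\lambda^{-1/2})^{-2}$; the Gaussian specialization $c_K=1$ is inherited directly from Theorem~\ref{thm:matrixnormestGeneral}.

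This corollary is essentially a bookkeeping exercise, so there is no genuine obstacle; the only points requiring a moment's care are (i) keeping track of the failure probability when plugging in $t=\sqrt m/2$ — the exponent becomes $c_3 m/4$, which one simply relabels as an absolute-constant multiple of $m$ — and (ii) being explicit (or at least consistent with the rest of the paper) about the redundancy regime $\lambda>4c_K^2$ needed for $A$ to be a positive finite constant. Everything else is the reciprocal-of-squared-singular-values identity applied termwise.
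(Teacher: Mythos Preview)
Your proposal is correct and follows exactly the approach the paper indicates: plug $t=\sqrt{m}/2$ into Theorem~\ref{thm:matrixnormestGeneral}, then pass from singular values of $E$ to those of $(E^TE)^{-1}$ via $\sigma_i\bigl((E^TE)^{-1}\bigr)=\sigma_i(E)^{-2}$. Your remarks about absorbing the factor $1/4$ into $c_3$ and about the regime $\lambda>4c_K^2$ (equivalently $\lambda>4$ in the Gaussian case) match precisely what the paper notes in the remark following the corollary.
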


\begin{remark}
For the choice of $t=\sqrt{m}/2$ in Theorem \ref{thm:matrixnormestGeneral}, $\lambda$ should exceed $(2c_K)^{2}$, which is 4 for Gaussian case, to ensure that $1/2-c_K\lambda^{-1/2}>0$.
\end{remark}
The following theorem is a central tool for bounding the projected back projection (PBP) error.
\begin{theorem}[Lemma 2.1 in \cite{candes2008restricted}]
\label{thm:PBP_classical_result:theorem}
Suppose that $\frac{1}{\sqrt{m}}\Phi$ satisfies the RIP of order $\delta_{2k}$. Then, for every $x,\,\in\Sigma^N_k$, and $z\in\Sigma^N_{2k}$ whose support set includes the support set of $x$,
$$\left|\frac{1}{m}\langle \Phi x\,,\,\Phi z\rangle -\langle x\,,\, z\rangle\right|\leq 2\delta_{2k} \|x\|\|z\|.$$
Equivalently, if for any set $S$ such that $|S| = 2k$ and $\text{supp}(x)\subset S$, we restrict $\frac{1}{m}\Phi^T \Phi x$ to index set $S$, we get
$$\left|\left(\frac{1}{m}\Phi^T \Phi x - x\right)_S\right| \leq 2\delta_{2k}\|x\|.$$
\end{theorem}
We will use the following theorem to control the RIP constants of sub-Gaussian matrices.
\begin{theorem}[{ \cite[p. 254]{Vershbook}}]
\label{thm:subGaussian_RIP}
Let $\Psi$ be an $m\times N$ sub-Gaussian random matrix with independent isotropic rows whose $\psi_2$-norms do not exceed $K$ and let $\overline{\Psi}=\frac{1}{\sqrt{m}}\Psi$. For $\epsilon\in (0,1)$ such that $m\geq \widetilde{c}_4\epsilon^{-2}k\ln(eN/k)$, $\overline{\Psi}$ satisfies the RIP of order $k$ with constant $\delta_k\le \epsilon$, with probability at least $1-2\exp(-\widetilde{c}_K\epsilon^2m).$ Here $\widetilde{c}_4=\widetilde{c}_4(K)$ and $\widetilde{c}_K>0$ depend only on $K$.
\end{theorem}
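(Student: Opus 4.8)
The plan is to reduce the restricted isometry property to the single-subspace singular-value estimate already recorded in Theorem~\ref{thm:matrixnormestGeneral}, and then pay for a union bound over all $k$-element supports using the sampling hypothesis on $m$.

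First I would fix a support $T\subset\{1,\dots,N\}$ with $\#T=k$ and observe that, since the rows of $\Psi$ are independent, isotropic and sub-Gaussian with $\psi_2$-norm at most $K$, the column-submatrix $\Psi_T\in\R^{m\times k}$ again has independent, isotropic, sub-Gaussian rows with $\psi_2$-norm at most $K$ (isotropy of $\Psi_T$ follows from isotropy of $\Psi$ restricted to the coordinates in $T$). Theorem~\ref{thm:matrixnormestGeneral} then gives, for every $t\ge0$ and with probability at least $1-2\exp(-c_3t^2)$,
$$\sqrt m-c_K\sqrt k-t\ \le\ \sigma_{\min}(\Psi_T)\ \le\ \sigma_{\max}(\Psi_T)\ \le\ \sqrt m+c_K\sqrt k+t.$$
Since $\|\overline\Psi v\|^2=\tfrac1m\|\Psi_Tv\|^2$ for every $v$ supported on $T$, dividing by $\sqrt m$ shows that on this event all such $v$ obey
$$\Bigl(1-c_K\sqrt{k/m}-t/\sqrt m\Bigr)^2\|v\|^2\ \le\ \|\overline\Psi v\|^2\ \le\ \Bigl(1+c_K\sqrt{k/m}+t/\sqrt m\Bigr)^2\|v\|^2.$$
Taking $t=c\epsilon\sqrt m$ for a small absolute constant $c$ and requiring $m$ large enough (relative to $K$) that $c_K\sqrt{k/m}\le\epsilon/6$, the squares of the two outer factors lie in $[1-\epsilon,1+\epsilon]$; hence for this fixed $T$ the matrix $\overline\Psi$ acts as an $\epsilon$-near isometry on the coordinate subspace indexed by $T$ except on an event of probability at most $2\exp(-c_3c^2\epsilon^2m)$.

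Next I would union-bound over the $\binom{N}{k}\le(eN/k)^k$ choices of $T$, so that the probability $\overline\Psi$ fails the RIP of order $k$ with constant $\epsilon$ is at most
$$2\exp\!\Bigl(k\ln(eN/k)-c_3c^2\epsilon^2m\Bigr).$$
Under the hypothesis $m\ge\widetilde c_4\epsilon^{-2}k\ln(eN/k)$, choosing $\widetilde c_4=\widetilde c_4(K)$ large enough that both $c_K\sqrt{k/m}\le\epsilon/6$ and $k\ln(eN/k)\le\tfrac12c_3c^2\epsilon^2m$ hold (using $\ln(eN/k)\ge1$ to absorb any additive constant), the exponent is at most $-\tfrac12c_3c^2\epsilon^2m$, which gives the claimed bound with $\widetilde c_K$ an absolute multiple of $c_3c^2$ ($c_K=1$ in the Gaussian case, so all constants become absolute). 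As an alternative that avoids Theorem~\ref{thm:matrixnormestGeneral}, one can argue from scratch on each fixed $T$: take a $1/4$-net $\mathcal N$ of $\mathbb S^{k-1}$ with $\#\mathcal N\le9^k$, apply Bernstein's inequality to $\tfrac1m\|\Psi_Ty\|^2$ — an average of $m$ independent sub-exponential variables with mean $1$ (by isotropy) and $\psi_1$-norm $\lesssim K^2$ — for each $y\in\mathcal N$, take a union bound over $\mathcal N$, and transfer the estimate from the net to the whole sphere by the standard approximation lemma; the extra $9^k$ factor is absorbed exactly as above.

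The main obstacle is the bookkeeping in the union-bound step: the per-support failure probability must decay in $m$ fast enough to beat the roughly $(eN/k)^k$ competing events, which is precisely what forces the scaling $m\gtrsim\epsilon^{-2}k\ln(eN/k)$, and the constants coming from Theorem~\ref{thm:matrixnormestGeneral} (or from Bernstein's inequality in the direct route) must be tracked carefully so that the final $\widetilde c_4$ and $\widetilde c_K$ depend on $K$ only.
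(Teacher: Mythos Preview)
The paper does not prove this theorem: it is quoted from Vershynin's book \cite[p.~254]{Vershbook} and stated in Section~\ref{sec:preliminaries} as a preliminary fact, with only Corollary~\ref{thm:cor:RIP_subGaussian} derived from it by specializing $\epsilon=1/\sqrt{2}$. Your proposal is correct and is essentially the standard argument that appears in Vershynin's text --- apply the singular-value concentration of Theorem~\ref{thm:matrixnormestGeneral} on each fixed $k$-support, then union-bound over the $\binom{N}{k}\le(eN/k)^k$ supports, absorbing the combinatorial factor via the hypothesis $m\gtrsim\epsilon^{-2}k\ln(eN/k)$; your alternative $\epsilon$-net/Bernstein route is also the one Vershynin uses to prove Theorem~\ref{thm:matrixnormestGeneral} itself, so both paths are equivalent in spirit.
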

Plugging $\epsilon=1/\sqrt{2}$ in the theorem above and using it for estimating the RIP constant $\delta_{2k}$ leads to the following corollary. Let $\bar{C}=4\widetilde{c}_4$ and $\bar{c}_K=\frac{1}{2}\widetilde{c}_K.$
\begin{corollary}
	\label{thm:cor:RIP_subGaussian}
Let $\Psi$ be an $m\times N$ sub-Gaussian random matrix with independent isotropic rows whose $\psi_2$-norms do not exceed $K$ and let $\overline{\Psi}=\frac{1}{\sqrt{m}}\Psi$. For $m\geq \bar{C} k\ln(0.5eN/k),$ $\overline{\Psi}$ satisfies the RIP of order $2k$ with constant $\delta_{2k}\le 1/\sqrt{2}$, with probability at least $1-2\exp(-\bar{c}_Km).$ Here $\bar{C}=\bar{C}(K)$ and $\bar{c}_K>0$ depend only on $K.$
\end{corollary}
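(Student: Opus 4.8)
The plan is to obtain Corollary~\ref{thm:cor:RIP_subGaussian} as a direct specialization of Theorem~\ref{thm:subGaussian_RIP}, via two substitutions: replace the sparsity level $k$ appearing there by $2k$, and fix the isometry tolerance at $\epsilon=1/\sqrt{2}$. First I would observe that $\Psi$ satisfies the hypotheses of Theorem~\ref{thm:subGaussian_RIP} at sparsity level $2k$: those hypotheses (independent isotropic sub-Gaussian rows with $\psi_2$-norm at most $K$) make no reference to the sparsity parameter, so nothing has to be rechecked. The normalization $\overline{\Psi}=\frac{1}{\sqrt m}\Psi$ is the same in both statements.

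Next I would track the three relevant quantities through the substitution. With $\epsilon=1/\sqrt 2$ one has $\epsilon^{-2}=2$ and $\epsilon^2=1/2$. The sample-size requirement $m\ge \tilde c_4\,\epsilon^{-2}\,(2k)\,\ln(eN/(2k))$ becomes $m\ge 4\tilde c_4\,k\,\ln\!\big(0.5\,eN/k\big)$, using $eN/(2k)=0.5\,eN/k$; this identifies $\bar C=4\tilde c_4$. The RIP bound $\delta_{2k}\le\epsilon$ becomes $\delta_{2k}\le 1/\sqrt 2$. The failure probability $2\exp(-\tilde c_K\epsilon^2 m)$ becomes $2\exp(-\tfrac12\tilde c_K m)$, identifying $\bar c_K=\tfrac12\tilde c_K$. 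Since $\tilde c_4$ and $\tilde c_K$ depend only on $K$, so do $\bar C$ and $\bar c_K$, and the stated conclusion follows verbatim.

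There is essentially no obstacle: the only points requiring a moment's care are the algebraic rewriting $\ln(eN/(2k))=\ln(0.5\,eN/k)$ and the tacit assumption $N\ge 2k$, which is needed both for this logarithm to be positive and for order-$2k$ RIP to be meaningful in the first place; both are innocuous in the intended regime $m\ll N$.
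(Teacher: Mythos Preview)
Your proposal is correct and matches the paper's own derivation exactly: the paper obtains the corollary by plugging $\epsilon=1/\sqrt{2}$ into Theorem~\ref{thm:subGaussian_RIP} applied at sparsity level $2k$, and explicitly records the same constants $\bar{C}=4\widetilde{c}_4$ and $\bar{c}_K=\tfrac{1}{2}\widetilde{c}_K$ that you compute.
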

We need to bound the mean of a function of sum of random variables. The Berry-Esseen theorem and the tail bound serves this purpose.
\begin{theorem}[Berry-Esseen theorem, see, e.g., \cite{shevtsova2010improvement}]
\label{thm:Berry-Esseen:theorem}
Let $X_i$, $i=1,2,...,k$ be independent random variables such that $\mathbb{E}X_i=0$, $\mathbb{E}X_i^2 = \sigma_i^2>0$ and $\mathbb{E}|X_i|^3 \leq \rho_i\leq \infty.$ Then, consider the distribution function of
$$\frac{X_1+X_2+...+X_k}{\sqrt{\sigma_1^2+\sigma_2^2+...+\sigma_k^2}}$$
denoted by $\Phi_k$ and the distribution function of the standard Gaussian random variable denoted by $\Psi$. Then, there exists constant $c_0$ such that
$$\sup_{t\in \mathbb{R}} |\Phi_k(t)-\Psi(t)| \leq c_0\frac{\sum_{i=1}^k \rho_i}{\left(\sum_{i=1}^k \sigma_k^2\right)^{3/2}}.$$
\end{theorem}

Finally, we also need the Poisson summation formula.
\begin{theorem}[{ \cite[p. 287]{epstein2007introduction}}]
Let $f:\mathbb{R}\mapsto\mathbb{R}$ be a Schwartz function. For any $a>0$ and $b\in\mathbb{R}$,
\begin{equation}
\label{eq:PoissonSumFormula}
\sum_{n\in\mathbb{Z}}f(an+b)=\sum_{p\in\mathbb{Z}}\frac{1}{a}\widehat{f}\left(\frac{p}{a}\right)\exp(2\pi i\frac{p}{a}b),
\end{equation}
where $\widehat{f}(\omega)=\int \exp(-2\pi i t\omega)f(t)dt$ is a Fourier transform of $f$.
\end{theorem}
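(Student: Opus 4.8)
The plan is to use the classical \emph{periodization plus Fourier series} argument. Fix $a>0$ and introduce the $a$-periodic function $F(x):=\sum_{n\in\mathbb{Z}}f(an+x)$. Since $f$ is Schwartz, for every $N$ one has $|f(t)|\le C_N(1+|t|)^{-N}$, so for $x$ in any bounded interval the tails $\sum_{|n|\ge M}|f(an+x)|$ are dominated by a convergent series of the form $\sum_{|n|\ge M}C(1+a|n|)^{-2}$; the same bound applies to the series obtained by differentiating term by term. Hence the series defining $F$ converges absolutely and locally uniformly, $F\in C^\infty(\mathbb{R})$, and $F(x+a)=F(x)$.

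Next, because $F$ is smooth and $a$-periodic, it equals its own Fourier series, $F(x)=\sum_{p\in\mathbb{Z}}c_p\exp(2\pi i px/a)$ with $c_p=\frac{1}{a}\int_0^a F(x)\exp(-2\pi i px/a)\,dx$. I would compute $c_p$ by substituting the definition of $F$, interchanging the locally uniformly convergent sum over $n$ with the integral over $[0,a]$, and then, for each fixed $n$, making the change of variables $u=an+x$. Since $\exp(-2\pi i p(an)/a)=\exp(-2\pi i pn)=1$, the $n$-th summand becomes $\frac1a\int_{an}^{an+a}f(u)\exp(-2\pi i pu/a)\,du$, and summing over $n\in\mathbb{Z}$ reassembles the intervals into all of $\mathbb{R}$, yielding $c_p=\frac1a\int_{-\infty}^\infty f(u)\exp(-2\pi i pu/a)\,du=\frac1a\widehat{f}(p/a)$.

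Finally, since $f$ Schwartz implies $\widehat{f}$ Schwartz, the coefficients $c_p=\frac1a\widehat{f}(p/a)$ decay rapidly, so the Fourier series $\sum_p\frac1a\widehat{f}(p/a)\exp(2\pi i px/a)$ converges absolutely and uniformly and represents the continuous function $F$ pointwise at every $x$. Evaluating the resulting identity at $x=b$ gives exactly \eqref{eq:PoissonSumFormula}.

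The only points requiring care are the two uniform-convergence justifications — that the periodization and its derivatives converge locally uniformly, and that $\sum_n$ may be exchanged with $\int_0^a$ — together with the pointwise convergence of the Fourier series of $F$; all three follow routinely from the Schwartz decay estimate $|f(t)|\le C_N(1+|t|)^{-N}$ (the last one is immediate once $F\in C^1$, or directly from the summability of $\{c_p\}$). None of these is a genuine obstacle: the mathematical content of the theorem is entirely the change-of-variables computation of $c_p$ in the middle step, and the statement needs nothing beyond $f$ real-valued Schwartz and $a>0$, $b\in\mathbb{R}$ arbitrary.
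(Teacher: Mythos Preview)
Your argument is the standard and correct proof of the Poisson summation formula via periodization and Fourier series. Note, however, that the paper does not prove this theorem at all: it is quoted as a preliminary result from the cited reference \cite{epstein2007introduction} and used as a black box in the proof of Theorem~\ref{thm:estmean}. So there is no ``paper's own proof'' to compare against; your write-up simply supplies the textbook justification that the authors take for granted.
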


\section{Proofs.}
\label{sec:proofs}
First, we provide a road map for obtaining the upper bound in Theorem \ref{thm:MSQforFrameGeneral}. Note that the reconstruction error satisfies
\begin{align}
\mathcal{E}(x)&=\|x-E^\dagger Q(Ex)\|=\|m\left(E^T E\right)^{-1}\cdot \frac{1}{m}E^T(Ex-Q(Ex))\| \notag \\
&\le \left(m\sigma_{\max}((E^TE)^{-1})\right)\left(\mu+\frac{1}{m}\|E^T(Ex-Q(Ex))-\mathbb{E}E^T(Ex-Q(Ex))\|\right), \label{eqproof1}
\end{align}
where $\mu =\frac{1}{m}\|\mathbb{E}E^T(Ex-Q(Ex))\|$. Here, the term $m\sigma_{\max}((E^TE)^{-1})$ is controlled using Corollary \ref{cor:matrixnormest}. In addition we need to estimate the deviation of $E^T(Ex-Q(Ex))$ from its mean $\mathbb{E}E^T(Ex-Q(Ex))$. This can be done entrywise using the following observations. Consider the $i$th entry of $\frac{1}{m}E^T(Ex-Q(Ex))$, say $\alpha_i$, given by
\begin{equation}
\label{exp:centered_entry_of_ETEx}
\alpha_i=\frac{1}{m}\sum_{j=1}^m\left( e_{ji}F(\sum_{s=1}^ke_{js}x_s)\right)
\end{equation}
where $F(z):=z-Q(z)$ for all $z\in\mathbb{R}$. Each summand in \eqref{exp:centered_entry_of_ETEx} depends only on the $j$th row of $E$. Thus, since $E$ has independent rows, the summands in \eqref{exp:centered_entry_of_ETEx} are independent. Furthermore, they are sub-Gaussian random variables whose sub-Gaussian norms do not exceed $K\delta/2$. So, we can  control the deviation of this sum from its mean as 
$$\Big |\frac{1}{m}\sum_{j=1}^m\left( e_{ji}F(\sum_{s=1}^ke_{js}x_s)-\mathbb{E}e_{ji}F(\sum_{s=1}^ke_{js}x_s)\right)\Big |\leq C_2\sqrt{\frac{1}{m}}$$
with probability at least $1-e\exp(-C_3C_2^2).$ Finally, $\frac{1}{m}\|E^T(Ex-Q(Ex))-\mathbb{E}E^T(Ex-Q(Ex))\|$ can be bounded using these entrywise estimates. Next we provide the full proof.

\subsection{Proof of Theorem \ref{thm:MSQforFrameGeneral}.}
Observe that $m>k$ implies that that $E$ is a full rank matrix with overwhelming probability, thus $E^\dagger$ is well-defined. Also by Corollary \ref{cor:matrixnormest}, 
$$A'\le \left(m\sigma_{\min}(E^TE)^{-1}\right)\le \left(m\sigma_{\max}(E^TE)^{-1}\right) \le A$$
with probability at least $1-2\exp(-c_3m)$, where $A$,$A'$, and $c_3$ are as in Corollary \ref{cor:matrixnormest}. This, together with \eqref{eqproof1}, gives 
$$
\mathcal{E}(x) \le A\left(\mu+\frac{1}{m}\|E^T(Ex-Q(Ex))-\mathbb{E}E^T(Ex-Q(Ex))\|\right)
$$
A similar calculation (using the reverse triangle inequality this time) yields the lower bound
$$
\mathcal{E}(x)
 \ge A'\left(\mu-\frac{1}{m}\|E^T(Ex-Q(Ex))-\mathbb{E}E^T(Ex-Q(Ex))\|\right).
$$
To finish the proof, we need to bound $\frac{1}{m}\|E^T(Ex-Q(Ex))-\mathbb{E}E^T(Ex-Q(Ex))\|=\big(\sum_{i=1}^k (\alpha_i-\mathbb{E}\alpha_i)^2\big)^{1/2}$, where $\alpha_i$ is as in \eqref{exp:centered_entry_of_ETEx}, from above and below. As outlined before, we will achieve this by controlling $\alpha_i-\mathbb{E}(\alpha_i)$ for each $i$. 

Observe that $\alpha_i$ is an average of $m$ terms ($e_{ji}F(\sum_{s=1}^k e_{js}x_s)$) that are
\emph{independent} because each term depends on $j$th row of matrix $E$ only, and according the conditions of the theorem, rows of $E$ are independent. Therefore, to bound $\alpha_i$ we use the Hoeffding inequality - see Lemma \ref{lem:CLT-weird} below. Combining the observations above and Lemma \ref{lem:CLT-weird} finishes the proof.

\begin{lemma}
\label{lem:CLT-weird}
Let $x\in\mathbb{R}^k$ and $E$ be an $m\times k$ matrix with independent rows such that its entries $e_{ij}$ are sub-Gaussian whose $\psi_2$-norms do not exceed $K$. Suppose that $\alpha_i$ is as in \eqref{exp:centered_entry_of_ETEx}. Then,
\begin{enumerate}[(i)]
\item Fix $1\leq i\leq k$. With any precision $c_2\in(0,1)$, with probability at least $1-\frac{1}{k}c_2$,
$$|\alpha_i-\mathbb{E}\alpha_i|\leq c_1\sqrt{\log{k}}\sqrt{\frac{1}{m}}K\delta.$$
\item With any precision $c_2\in(0,1)$, with probability at least $1-c_2$, we have
$$\frac{1}{m}\|E^T(Ex-Q(Ex))-\mathbb{E}E^T(Ex-Q(Ex))\|\leq c_1\sqrt{\log{k}}\sqrt{\frac{k}{m}}K\delta.$$
\end{enumerate}
Here $c_1=C\sqrt{\log(e^2c_2^{-1})}$ and $C$ is an absolute constant.
\end{lemma}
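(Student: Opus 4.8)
The plan is to recognize each deviation $\alpha_i-\mathbb E\alpha_i$ as a normalized sum of independent, centered sub-Gaussian random variables and apply the Hoeffding inequality (Theorem~\ref{thm:Hoeffding}); part (ii) then follows from part (i) by a union bound over the $k$ coordinates.

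\emph{Step 1: structure of $\alpha_i$.} Fix $1\le i\le k$ and write $\alpha_i=\frac1m\sum_{j=1}^m Y_j$ with $Y_j:=e_{ji}\,F\!\big(\sum_{s=1}^k e_{js}x_s\big)$ and $F(z):=z-Q(z)$. Each $Y_j$ is a (measurable) function of the $j$th row $(e_{j1},\dots,e_{jk})$ of $E$ alone, so the $Y_j$ are independent by hypothesis. Since $|F|\le\delta/2$ pointwise and $\|e_{ji}\|_{\psi_2}\le K$, the product estimate $\|Y_j\|_{\psi_2}\le\tfrac{\delta}{2}\|e_{ji}\|_{\psi_2}\le\tfrac{K\delta}{2}$ holds (multiplying a sub-Gaussian variable by a possibly correlated bounded one scales the $\psi_2$-norm by at most the sup-norm factor). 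Setting $\widetilde Y_j:=Y_j-\mathbb E Y_j$, centering costs at most a factor $2$, so $\|\widetilde Y_j\|_{\psi_2}\le K\delta$, and $\alpha_i-\mathbb E\alpha_i=\frac1m\sum_{j=1}^m\widetilde Y_j$.

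\emph{Step 2: Hoeffding and the choice of $t$.} Apply Theorem~\ref{thm:Hoeffding} to $\widetilde Y_1,\dots,\widetilde Y_m$ with coefficient vector $a=(1/m,\dots,1/m)$, so that $\|a\|^2=1/m$ and the $\psi_2$-parameter appearing there is at most $K\delta$; for every $t\ge0$,
\[
\mathbb P\big\{|\alpha_i-\mathbb E\alpha_i|\ge t\big\}\le e\exp\!\Big(-\frac{c\,m\,t^2}{K^2\delta^2}\Big).
\]
Choose $t=c_1\sqrt{\log k}\,\sqrt{1/m}\,K\delta$ with $c_1=C\sqrt{\log(e^2c_2^{-1})}$. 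Then the right-hand side equals $e\exp\!\big(-cC^2\log(e^2c_2^{-1})\log k\big)$, and a short computation (using $\log(e^2c_2^{-1})\ge2$ together with $\log k\ge1$, i.e.\ $k\ge3$ as in the theorems where the lemma is invoked) shows that for any absolute constant $C\ge\sqrt{2/c}$ this is at most $c_2/k$. This proves (i).

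\emph{Step 3: union bound for (ii), and the main obstacle.} Recall $\frac1m\big\|E^T(Ex-Q(Ex))-\mathbb E E^T(Ex-Q(Ex))\big\|=\big(\sum_{i=1}^k(\alpha_i-\mathbb E\alpha_i)^2\big)^{1/2}$. Taking the union over $i=1,\dots,k$ of the exceptional events from (i), with probability at least $1-k\cdot\frac{c_2}{k}=1-c_2$ we have $|\alpha_i-\mathbb E\alpha_i|\le c_1\sqrt{\log k}\,\sqrt{1/m}\,K\delta$ for all $i$ simultaneously; on that event the Euclidean norm is at most $\sqrt k\cdot c_1\sqrt{\log k}\,\sqrt{1/m}\,K\delta=c_1\sqrt{\log k}\,\sqrt{k/m}\,K\delta$, which is (ii). Conceptually this is a textbook Hoeffding estimate with no genuine difficulty; the two points requiring care are (a) the sub-Gaussian product bound in Step 1, which relies only on boundedness (not continuity) of $F$, and (b) the constant bookkeeping in Step 2 — the factor $\sqrt{\log k}$ is built into (i) precisely so that the union bound in Step 3 yields overall failure probability $c_2$ rather than $c_2$ inflated by $k$, and one must verify that a single absolute $C$ in $c_1=C\sqrt{\log(e^2c_2^{-1})}$ works uniformly over $c_2\in(0,1)$.
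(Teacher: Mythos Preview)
Your proposal is correct and follows essentially the same route as the paper: bound $\|Y_j\|_{\psi_2}\le K\delta/2$ via the pointwise bound $|F|\le\delta/2$, center at cost a factor $2$, apply Hoeffding to the normalized sum, and then take a union bound over the $k$ coordinates. The only cosmetic differences are that the paper applies Hoeffding with $a_j=1$ (bounding $|\sum_j Z_j|$ and dividing by $m$ afterwards) whereas you take $a_j=1/m$ directly, and the constant bookkeeping is organized slightly differently; both arrive at the same requirement $k\ge 3$ (stated in the theorems that invoke the lemma) for the failure probability to be at most $c_2/k$.
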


\begin{proof}[Proof of Lemma \ref{lem:CLT-weird}]
Note that 
$$\frac{1}{m}\|E^T(Ex-Q(Ex))-\mathbb{E}E^T(Ex-Q(Ex))\|=\Big(\sum_{i=1}^k(\alpha_i-\mathbb{E}\alpha_i)^2\Big)^{1/2}$$
and, therefore, the second statement of Lemma \ref{lem:CLT-weird} follows from the first one using straight-forward union bound on the first inequality. Therefore, we focus on the proof of the first statement of the lemma.

Fix $1\leq i\leq k$, and consider
$$
\alpha_i-\mathbb{E}\alpha_i=\frac{1}{m}\sum_{j=1}^m\left(e_{ji}F(\sum_{s=1}^ke_{js}x_s)-\mathbb{E}e_{ji}F(\sum_{s=1}^{k}e_{js}x_s)\right).
$$
Note that we aim to bound the sum of $m$ random variables $Z_j:=\displaystyle e_{ji}F(\sum_{s=1}^ke_{js}x_s)-\mathbb{E}e_{ji}F(\sum_{s=1}^{k}e_{js}x_s)$. 
\noindent{\bf Claim:} Given that $E$ has independent, isotropic, sub-Gaussian random rows, 
\begin{enumerate}[(a)]
\item $Z_j$ are independent. 
\item $Z_j$ are centered, sub-Gaussian random variables whose $\psi_2$-norm does not exceed $K\delta$. 
\end{enumerate}
\smallskip

Suppose this claim holds (it is proved below). Then, the Hoeffding inequality, as stated in Theorem \ref{thm:Hoeffding}, with ${t:=c_1\sqrt{\log k}\sqrt{m}K\delta}$, where $c_1>0$ is a constant to be determined later, implies that
\begin{equation}
\label{eq:HoeffdingInMyThm}
\mathbb{P}\left(\left|\alpha_i-\mathbb{E}\alpha_i\right|\leq c_1\sqrt{\log k}\sqrt{\frac{1}{m}}K\delta\right)\geq 1-e\exp\left(-cc_1^2\cdot\log(k)m\right),
\end{equation}
where $c>0$ is an absolute constant. Simplifying the probability of failure, we have
$$\mathbb{P}\left(\left|\alpha_i-\mathbb{E}\alpha_i \right|\leq c_1\sqrt{\log k}\sqrt{\frac{1}{m}}K\delta\right)\geq 1-ek^{-cc_1^2}.$$

One can simplify the expression for probability of failure further to obtain the statement in the lemma. Specifically,  if $cc_1^2>1$ (which can be guaranteed by choosing $c_2\in (0,1)$ and then picking $c_1$ such that $c_2=e^{2-cc^2_1}$),  then, for $k\geq 3$, the probability of failure may be bounded from above by $ek^{-cc_1^2} \le \frac{1}{k}c_2$.

The proof of the lemma will be complete once we prove the claim above. 

\noindent{\bf Proof of the claim.} Since the rows of $E$ are independent, so are $\{e_{ji}F(\sum_{s=1}^ke_{js}x_s)\}$, $j=1,2,..,m$, which, in turn, implies (a). Next note that $Z_j$ are centered by definition, so to prove (b), it suffices to show that ${e_{ji}F(\sum_{s=1}^ke_{js}x_s)}$ and ${\mathbb{E}e_{ji}F(\sum_{s=1}^ke_{js}x_s)}$ are sub-Gaussian. The latter term is a constant, and
\begin{align*}
\|Z_j\|_{\psi_2}&\leq\|e_{ji}F(\sum_{s=1}^ke_{js}x_s)\|_{\psi_2}+\big |\mathbb{E}\big[e_{ji}F(\sum_{s=1}^ke_{js}x_s)\big]\big |\\
&\leq\|e_{ji}F(\sum_{s=1}^ke_{js}x_s)\|_{\psi_2}+\mathbb{E}\big |e_{ji}F(\sum_{s=1}^ke_{js}x_s)\big |\leq 2\|e_{ji}F(\sum_{s=1}^ke_{js}x_s)\|_{\psi_2}\\
&\leq 2\|F(\sum_{s=1}^ke_{js}x_s)\|_\infty\|e_{ji}\|_{\psi_2}\leq2\cdot0.5\delta\|e_{ji}\|_{\psi_2}\leq\delta K\\
\end{align*}
Here we used basic properties of $\psi_2$-norm together with the fact that the range of $F$ is $(-\delta/2,\delta/2]$ and the sub-Gaussian norm of $e_{ji}$ does not exceed the sub-Gaussian norm of the $j$th row of $E$, which is at most $K$. 

This finishes the proof of the claim, consequently the proof of Lemma \ref{lem:CLT-weird}, and hence the proof of Theorem \ref{thm:MSQforFrameGeneral}.
\end{proof}

\subsection{Proof of Theorem \ref{thm:estmean}}
\begin{align*}
\frac{1}{m}\mathbb{E}\left(\sum_{j=1}^m e_{ji}F(\sum_{s=1}^ke_{js}x_s)\right)&= \mathbb{E}\left(e_{1i}F(\sum_{s=1}^k e_{1s}x_s)\right) \\
&= \mathbb{E}\left(e_{1i}(\sum_{s=1}^k e_{1s}x_s)\right)- \mathbb{E}\left(e_{1i}Q(\sum_{s=1}^k e_{1s}x_s)\right)\\
 &=\mathbb{E}x_ie_{1i}^2+\mathbb{E}\sum_{s\neq i}x_se_{1i}e_{1s}-\mathbb{E}\left(e_{1i}Q(\sum_{s=1}^k e_{1s}x_s)\right)\\
 &=x_i-\mathbb{E}\left(e_{1i}Q(\sum_{s=1}^k e_{1s}x_s)\right)\\
 \end{align*}
In the last equality we used the facts that $E$ has isotropic rows, so each entry has mean 0 and variance 1, and its entries 
are independent.

If $x_i=0$, then, using the fact that random variables $e_{1i}$ and $\sum_{s\neq i} e_{1s}x_s$ are independent,
$$\mathbb{E}\left(e_{1i}Q(\sum_{s=1}^k e_{1s}x_s)\right)=\mathbb{E}e_{1i}\mathbb{E}Q(\sum_{s=1}^k e_{1s}x_s)=0.$$
Then, $\mathbb{E}e_{1i}F(\sum_{s=1}^k e_{1s}x_s)=x_i=0$, and the statement of the theorem holds.

Now assume that $x_i\neq 0$. There are two possibilities, namely, $x_i>0$ and $x_i<0$. We join them into the one case for a potentially different set of $x_i$'s using the oddity of $Q$. Let $\overline{x_s}:=x_s\text{sign}(x_i),$ $s=1,2,...,k$. Then,
\begin{equation}
\label{eq:ComputeMean_F_to_Q}
\mathbb{E}\left(e_{1i}F(\sum_{s=1}^k e_{1s}x_s)\right)=x_i-\text{sign}(x_i)\mathbb{E}\left(e_{1i}Q(\sum_{s=1}^k e_{1s}\overline{x_s})\right).
\end{equation}
Note that $\overline{x_i}=x_i\text{sign}(x_i)=|x_i|$ which is positive because $x_i\neq 0$. Therefore, it suffices to estimate $\mathbb{E}\big(e_{1i}Q(\sum_{s=1}^k e_{1s}\overline{x_s})\big)$.
 
Let $\one$ be an indicator function of a Boolean variable, taking the value 1 if its argument is true and 0 otherwise. Using this notation and the definition of $Q$, we have
\begin{equation}\label{eq:pf:111}
\mathbb{E}\left(e_{1i}Q(\sum_{s=1}^k e_{1s}\overline{x_s})\right)=\sum_{n\in\mathbb{Z}}\mathbb{E}e_{1i}\cdot n\delta\cdot\one\{\sum_{s=1}^ke_{1s}\overline{x_s}\in (n\delta-\frac{\delta}{2},n\delta+\frac{\delta}{2}]\}.
\end{equation}
Let $\eta_i:=\sum_{s\neq i}e_{1s}\overline{x_s}.$ Note that 
$$\one\{\sum_{s=1}^ke_{1s}\overline{x_s}\in (n\delta-\frac{\delta}{2},n\delta+\frac{\delta}{2}]\}=\begin{cases}
1,& \text{if } e_{1i}\in(\frac{n\delta-\delta/2-\eta_i}{\overline{x_i}},\frac{n\delta+\delta/2-\eta_i}{\overline{x_i}}]\\
0,& \text{otherwise}
\end{cases}.$$
Therefore, one may rewrite the summands on the right hand side of \eqref{eq:pf:111} as 
\begin{align}
\label{eq:split_chi}
\mathbb{E}e_{1i}n\delta\one\{\sum_{s=1}^ke_{1s}x_s&\in (n\delta-\frac{\delta}{2},n\delta+\frac{\delta}{2}]\} \\
&=\mathbb{E}e_{1i}n\delta\one\{e_{1i}\leq\frac{n\delta+\delta/2-\eta_i}{\overline{x_i}}\}-\mathbb{E}e_{1i}n\delta\one\{e_{1i}\leq\frac{n\delta-\delta/2-\eta_i}{\overline{x_i}}\}. \notag
\end{align}		

Next, let $g(z):=\mathbb{E}e_{1i}\one\{e_{1i}\leq z\}$ for $z\in\mathbb{R}$. Since all entries of $E$ are distributed identically, $g$ does not depend on $i$. We claim that $g$ is a Schwartz function. Indeed,
$$g(z)=\int_{-\infty}^z t\phi(t)dt,$$
where $\phi$ is the density function of $e_{11}$. Note that by our hypothesis $\phi$ is a Schwartz function. So, clearly, $g\in\mathcal{C}^\infty$. In addition, since $\phi(t)=O(|t|^{-n-2})$ for all $n$ as $t\rightarrow -\infty$, $g(z)=O(|t|^{-n})$ for all $n$ as $t\rightarrow -\infty$. Now recall that the rows of $E$ are isotropic, so the mean value of each entry is 0. Then,
$$g(z)=\mathbb{E}e_{11}-\int_z^\infty t\phi(t)dt=-\int_z^\infty t\phi(t)dt.$$
By similar arguments, $g(z)=O(|t|^{-n})$ as $z\rightarrow\infty.$ Therefore, $g$ is a Schwartz function.

In what follows, we will have random arguments for $g$. Specifically, for a random variable $\eta$ that is independent of $e_{1i}$, let
$$g(\eta):=\mathbb{E}\left[e_{1i}\one\{e_{1i}\leq\eta\}\,|\, \eta\right].$$
Note that this definition of $g$ agrees with the previous one when one takes $\eta=z$ a.e. (which is independent of $e_{1i}$). Using the law of total expectation for each term in (\ref{eq:split_chi}) and the definition of $g$, we get
\begin{align*}\mathbb{E}e_{1i}n\delta\one\{e_{1i}\leq\frac{n\delta+\delta/2-\eta_i}{\overline{x_i}}\}&=\mathbb{E}\left[\mathbb{E}\left[e_{1i}n\delta\one\{e_{1i}\leq\frac{n\delta+\delta/2-\eta_i}{\overline{x_i}}\}\,|\, \eta_i\right]\right]\\
&=n\delta\mathbb{E}g(\frac{n\delta+\delta/2-\eta_i}{\overline{x_i}})\\
\end{align*}
and, similarly,
$$\mathbb{E}e_{1i}n\delta\one\{e_{1i}\leq\frac{n\delta-\delta/2-\eta_i}{\overline{x_i}}\}=n\delta\mathbb{E}g(\frac{n\delta-\delta/2-\eta_i}{\overline{x_i}}).$$
Therefore, (\ref{eq:split_chi}) may be rewritten as follows,
$$\mathbb{E}e_{1i}n\delta\one\{\sum_{s=1}^ke_{1s}\overline{x_s}\in (n\delta-\delta/2,n\delta+\delta/2]\}=n\delta\mathbb{E}g(\frac{n\delta+\delta/2-\eta_i}{\overline{x_i}})-n\delta\mathbb{E}g(\frac{n\delta-\delta/2-\eta_i}{\overline{x_i}}),$$
which, in turn, allows us to rewrite the original expression $\mathbb{E}\left(e_{1i}Q(\sum_{s=1}^k e_{1s}\overline{x_s})\right)$ as
\begin{align*}\mathbb{E}\left(e_{1i}Q(\sum_{s=1}^k e_{1s}\overline{x_s})\right)&=\mathbb{E}\sum_{n\in\mathbb{Z}}\left(n\delta(g(\frac{n\delta+\delta/2-\eta_i}{\overline{x_i}})-g(\frac{n\delta-\delta/2-\eta_i}{\overline{x_i}}))\right)\\
&=\mathbb{E}\sum_{n\in\mathbb{Z}}\left(n\delta g(\frac{n\delta+\delta/2-\eta_i}{\overline{x_i}})-((n-1)+1)\delta g(\frac{(n-1)\delta+\delta/2-\eta_i}{\overline{x_i}}))\right)\\
\end{align*}
Splitting the series and shifting the index of summation by 1 leads to
\begin{equation}
\label{eq:before_poisson}
\mathbb{E}\left(e_{1i}Q(\sum_{s=1}^k e_{1s}\overline{x_s})\right)=\mathbb{E}\left(-\delta\sum_{n\in\mathbb{Z}}g(\frac{n\delta+\delta/2-\eta_i}{\overline{x_i}})\right).
\end{equation}
Here splitting the series can be justified by using Fubini-Tonelli theorem twice together with the fact that $g$ is a Schwartz function, which implies $g(z)=O(|z|^{-3})$ as $|z|\rightarrow\infty$. Therefore, $\sum_{n\in\mathbb{Z}}n\delta g(n\delta\pm\delta/2-\eta_i)$ is convergent for any value of $\eta_i$ in $\mathbb{R}$.

Next, we will estimate the mean value of the series, where we use the Poisson summation formula (\ref{eq:PoissonSumFormula}), which may be applied as $g$ is a Schwartz function. Denote the Fourier transform of $g$ by $\widehat{g}(\omega):=\int \exp(-i2\pi t\omega)g(t)dt$. Then,
\begin{align*}
\sum_{n\in\mathbb{Z}}g(n\frac{\delta}{\overline{x_i}}+\frac{\delta/2-\eta_i}{\overline{x_i}})&=\sum_{p\in\mathbb{Z}}\frac{\overline{x_i}}{\delta}\widehat{g}(\frac{\overline{x_i}}{\delta}p)\exp(i2\pi\frac{\overline{x_i}p}{\delta}\frac{\delta/2-\eta_i}{\overline{x_i}})\\
&=\frac{\overline{x_i}}{\delta}\sum_{p\in\mathbb{Z}}(-1)^p\widehat{g}(\frac{\overline{x_i}}{\delta}p)\exp(-i2\pi\frac{p}{\delta}\eta_i)\\
\end{align*}
Plugging the last formula into (\ref{eq:before_poisson}) gives us the following equation:
$$\mathbb{E}\left(e_{1i}Q(\sum_{s=1}^k e_{1s}\overline{x_s})\right)=\mathbb{E}\left(-\delta\frac{\overline{x_i}}{\delta}\sum_{p\in\mathbb{Z}}(-1)^p\widehat{g}(\frac{\overline{x_i}}{\delta}p)\exp(-i2\pi\frac{p}{\delta}\eta_i)\right).$$
We want to interchange the expectation and infinite sum. To do this, we need to verify the conditions of dominated convergence theorem. Note that
$$|(-1)^p\widehat{g}(\frac{\overline{x_i}}{\delta}p)\exp(-i2\pi\frac{p}{\delta}\eta_i)|=|\widehat{g}(\frac{\overline{x_i}}{\delta}p)|.$$Since $g$ is a Swartz function, so is $\widehat{g}$, and therefore,
$$\mathbb{E}\sum_{p\in\mathbb{Z}}|(-1)^p\widehat{g}(\frac{\overline{x_i}}{\delta}p)\exp(-i2\pi\frac{p}{\delta}\eta_i)|=\mathbb{E}\sum_{p\in\mathbb{Z}}|\widehat{g}(\frac{\overline{x_i}}{\delta}p)|=\sum_{p\in\mathbb{Z}}|\widehat{g}(\frac{\overline{x_i}}{\delta}p)|$$
is convergent. Note that the last equality holds because we consider the expected value of a deterministic expression, so the expected value is redundant. Using Fubini-Tonelli theorem, we interchange the expected value and the infinite sum, and obtain
\begin{equation}
\label{eq:after_poisson}
\mathbb{E}\left(e_{1i}Q(\sum_{s=1}^k e_{1s}\overline{x_s})\right)=\left(-\delta\frac{\overline{x_i}}{\delta}\sum_{p\in\mathbb{Z}}\left((-1)^p\widehat{g}(\frac{\overline{x_i}}{\delta}p)\mathbb{E}\exp(-i2\pi\frac{p}{\delta}\eta_i)\right)\right).
\end{equation}
Moreover, since $\eta_i=\sum_{s\neq i}e_{1s}\overline{x_s}$ and $\{e_{1s}\}$ is a collection of independent random variables,
\begin{align*}
\mathbb{E}\exp(-i2\pi\frac{p}{\delta}\eta_i)&=\mathbb{E}\prod_{s\neq i}\exp(-i2\pi\frac{p}{\delta}\overline{x_s}e_{1s})\\
&=\prod_{s\neq i}\mathbb{E}\exp(-i2\pi\frac{p}{\delta}\overline{x_s}e_{1s})\\
&=\prod_{s\neq i}\widehat{\phi}(\frac{p}{\delta}\overline{x_s})\\
\end{align*}
where $\phi$ is the density function of $e_{1s}$ (recall that all entries of matrix $E$ are identically distributed).
Plugging the result above into (\ref{eq:after_poisson}) and then plugging into (\ref{eq:ComputeMean_F_to_Q}), we get
\begin{align*}
\mathbb{E}\left(e_{1i}F(\sum_{s=1}^k e_{1s}x_s)\right)&=x_i-\text{sign}(x_i)\left(-\overline{x_i}\sum_{p\in\mathbb{Z}}\left((-1)^p\widehat{g}(\frac{\overline{x_i}}{\delta}p)\prod_{s\neq i}\widehat{\phi}(\frac{p}{\delta}\overline{x_s})\right)\right),\\
&=x_i+x_i\sum_{p\in\mathbb{Z}}\left((-1)^p\widehat{g}(\frac{|x_i|}{\delta}p)\prod_{s\neq i}\widehat{\phi}(\frac{x_s\text{sign}(x_i)}{\delta}p)\right)
\end{align*}
where $\phi$ is a density function of $e_{11}$ and $g(z)=\mathbb{E}e_{11}\one\{e_{11}<z\}=\int_{-\infty}^z t\phi(t)dt.$ To finish the proof of the first statement of the Theorem,
\begin{align*}
\mathbb{E}\left(\sum_{j=1}^m e_{ji}F(\sum_{s=1}^ke_{js}x_s)\right)&=m\mathbb{E}\left(e_{1i}F(\sum_{s=1}^k e_{1s}x_s)\right)\\
&=m\left(x_i+x_i\sum_{p\in\mathbb{Z}}\left((-1)^p\widehat{g}(\frac{|x_i|}{\delta}p)\prod_{s\neq i}\widehat{\phi}(\frac{x_s\text{sign}(x_i)}{\delta}p)\right)\right)\\
\end{align*}
which coincides with (\ref{eq:OneEntry_Schwartz}).

Next, assume that entries of $E$ are i.i.d. standard Gaussian random variables. Then, the density function $\phi$ and its Fourier transform  $\widehat{\phi}$ are given by 
$$\phi(z)=\sqrt{\frac{1}{2\pi}}\exp\left(-\frac{z^2}{2}\right), \quad \widehat{\phi}(\omega)=\exp(-2\pi^2\omega^2).$$
Consequently,
$$g(z):=\int_{-\infty}^z t\phi(t)dt=\int_{-\infty}^z t\cdot\sqrt{\frac{1}{2\pi}}\exp(-t^2/2)dt=-\sqrt{\frac{1}{2\pi}}e^{-\frac{z^2}{2}}.$$
and its Fourier transform is $\widehat{g}(\omega):=-\exp(-2\pi^2\omega^2)$.
Plugging the exact values of $\widehat{\phi}$ and $\widehat{g}$ into (\ref{eq:OneEntry_Schwartz}),
\begin{align*}
\mathbb{E}\left(\sum_{j=1}^m e_{ji}F(\sum_{s=1}^ke_{js}x_s)\right)&=m\left(x_i+x_i\sum_{p\in\mathbb{Z}}\left((-1)^p\left(-e^{-2\pi^2\frac{x_i^2p^2}{\delta^2}}\right)\prod_{s\neq i}e^{-2\pi^2\frac{x_s^2p^2}{\delta^2}}\right)\right)\\
&=m\left(x_i+x_i\sum_{p\in\mathbb{Z}}\left(-(-1)^pe^{-2\pi^2\frac{\sum_{s=1}^k x_s^2}{\delta^2}p^2}\right)\right)\\
&=m\left(x_i-x_i\sum_{p\in\mathbb{Z}}(-1)^pe^{-2\pi^2\frac{\|x\|^2}{\delta^2}p^2}\right).\\
\end{align*}
Note that terms of the series are even with respect to $p$, i.e., the value of the terms for $p$ and $-p$ are identical. When $p=0$, the term is $1$. Therefore, one may simplify the series as follows.
\begin{align*}
\mathbb{E}\left(\sum_{j=1}^m e_{ji}F(\sum_{s=1}^ke_{js}x_s)\right)&=m(x_i-x_i-2x_i\sum_{p=1}^\infty(-1)^pe^{-2\pi^2\frac{\|x\|^2}{\delta^2}p^2})\\
&=-2mx_i\sum_{p=1}^\infty (-1)^pe^{-2\pi^2\frac{\|x\|^2}{\delta^2}p^2}.\\
\end{align*}
This is an alternating series, so one may bound the sum using the first two terms of the series. To be more precise, the exact value of the series $\mathbb{E}\left(\sum_{j=1}^me_{ji}F(\sum_{s=1}^ke_{js}x_s)\right)$ lies between
$2mx_i\exp(-2\pi^2\frac{\|x\|^2}{\delta^2})$ and $2mx_i\exp(-2\pi^2\frac{\|x\|^2}{\delta^2})-2mx_i\exp(-8\pi^2\frac{\|x\|^2}{\delta^2})$, which implies \eqref{eq:single_entry_Gaussian_est} and finishes the proof.
\qed

\subsection{Proof of Theorem \ref{thm:value_of_mu_bound:theorem}}
The rigorous proof is based on the heuristics outlined in Section \ref{sec:on_the_value_of_mu:subsection}. We introduce the following notations for simplicity: for unit-norm $x\in\mathbb{R}^k$, let $z=e_1^T x$ and $\xi\sim\mathcal{N}(0,1)$. Note that $\mathbb{E}z^2 = 1$. Then, using the Cauchy-Schwarz inequality and the inverse triangle inequality,
\begin{align*}
\mu &\geq \frac{\left|\mathbb{E}e_1^TxF(e_1^Tx)\right|}{\|x\|}=|\mathbb{E}zF(z)|\geq |\mathbb{E}\xi F(\xi)| - |\mathbb{E}\xi F(\xi) - \mathbb{E} zF(z)|\\
& = |\mathbb{E}\xi F(\xi)| - |\mathbb{E}\xi^2 - \mathbb{E}\xi Q(\xi) - \mathbb{E}z^2 + \mathbb{E} zQ(z)| = |\mathbb{E}\xi F(\xi)| - |\mathbb{E} zQ(z) - \mathbb{E}\xi Q(\xi)|.
\end{align*}
The first term is computed in Theorem \ref{thm:estmean} as follows.
$$2\left(e^{-\frac{2\pi^2}{\delta^2}}-e^{-\frac{8\pi^2}{\delta^2}}\right)\leq|\mathbb{E}\xi F(\xi)|=2\sum_{p=1}^\infty (-1)^{p} e^{-\frac{2\pi^2 p^2}{\delta^2}}\leq 2e^{-\frac{2\pi^2}{\delta^2}}.$$
We conclude that
\begin{equation}
\label{eq:reconst_error_as_k_grows_lower_bound:inequality}
\mu \geq 2\left(e^{-\frac{2\pi^2}{\delta^2}}-e^{-\frac{8\pi^2}{\delta^2}}\right)- |\mathbb{E} zQ(z) - \mathbb{E}\xi Q(\xi)|. 
\end{equation}

Next, we show that for the broad class of signals $x\in\R^k$, $|\mathbb{E} zQ(z) - \mathbb{E}\xi Q(\xi)|$ is arbitrarily small for large enough $k$. Note that for i.i.d. sub-Gaussian $e_{ij}$ and broad class of signals $x^k$, $z:=\sum_{s=1}^k e_{1s}x_s$ converges to a standard Gaussian random variable in distribution, and below, we establish the non-asymptotic version of it.

Fix $C\in\mathbb{N}$. Then, using that $zQ(z)\one_{z\in(-\delta/2,\delta/2)}= \mathbb{E}\xi Q(\xi)\one_{\xi\in(-\delta/2,\delta/2]}=0$ and the triangle inequality, we conclude that
\begin{align*}
\left|\mathbb{E} zQ(z) - \mathbb{E}\xi Q(\xi)\right| &\leq \left|\mathbb{E}zQ(z)\one_{z\in(\delta/2,C\delta +\delta/2]} -\mathbb{E}\xi Q(\xi)\one_{\xi\in(\delta/2, C\delta/2 +\delta/2]}\right| \\
& + \left|\mathbb{E}zQ(z)\one_{z\in(-C\delta/2 -\delta/2,-\delta/2]} -\mathbb{E}\xi Q(\xi)\one_{\xi\in(-C\delta/2-\delta/2,-\delta/2]}\right|\\
& +\left|\mathbb{E}zQ(z)\one_{z \leq -C\delta/2 -\delta/2} -\mathbb{E}\xi Q(\xi)\one_{\xi \leq -C\delta/2 -\delta/2} +\mathbb{E}zQ(z)\one_{z > C\delta/2 + \delta/2} - \mathbb{E}\xi Q(\xi)\one_{\xi > C\delta/2 +\delta/2}\right|.
\end{align*}
Recall that for any non-negative random variable, $\mathbb{E}\gamma = \int_0^\infty \mathbb{P}(\gamma > t) dt.$ Using this formula and that $\gamma Q(\gamma) \geq 0$ for all $\gamma$, we get
\begin{align*}
\left|\mathbb{E} zQ(z) - \mathbb{E}\xi Q(\xi)\right| &\leq \left|\int_0^\infty \left(\mathbb{P}\left(zQ(z)\one_{z\in(\delta/2,C\delta +\delta/2]} > t\right) - \mathbb{P}\left(\xi Q(\xi)\one_{\xi\in(\delta/2,C\delta + \delta/2]}>t\right)\right)dt\right|\\
& + \left|\int_0^\infty \left(\mathbb{P}\left(zQ(z)\one_{z\in(-C\delta - \delta/2,-\delta/2]} > t\right) - \mathbb{P}\left(\xi Q(\xi)\one_{\xi\in(-C\delta/2 -\delta/2,-\delta/2]}>t\right)\right)dt\right|\\
& + \left|\int_0^\infty \left[\mathbb{P}\left(zQ(z)\one_{|z|\geq C\delta +\delta/2} > t\right) - \mathbb{P}\left(\xi Q(\xi)\one_{|\xi|\geq C\delta + \delta/2} > t\right)\right]dt\right|.
\end{align*}
Note that on each interval $(i\delta - \delta/2, i\delta + \delta/2]$, $Q$ is constant. Then, we can rewrite sums as follows.
\begin{align*}
\left|\mathbb{E} zQ(z) - \mathbb{E}\xi Q(\xi)\right| &\leq \left|\int_0^\infty \sum_{i=1}^C\left(\mathbb{P}\left(z i\delta\one_{z\in(i\delta-\delta/2,i\delta +\delta/2]} > t\right) - \mathbb{P}\left(\xi i\delta)\one_{\xi\in(i\delta -\delta/2,i\delta + \delta/2]}>t\right)\right)dt\right|\\
& + \left|\int_0^\infty \sum_{i=1}^C\left(\mathbb{P}\left(z(-i\delta)\one_{z\in(-i\delta - \delta/2,-i\delta +\delta/2]} > t\right) - \mathbb{P}\left(\xi (-i\delta)\one_{\xi\in(-i\delta -\delta/2,-i\delta + \delta/2]}>t\right)\right)dt\right|\\
& + \left|\int_{0}^\infty \left[\mathbb{P}\left(|z|\geq C\delta +\delta/2\text{ and }zQ(z) > t\right) - \mathbb{P}\left(|\xi|\geq C\delta + \delta/2\text{ and }\xi Q(\xi) > t\right)\right]dt\right|.
\end{align*}
Note that if $|z|>\delta/2$, then $zQ(z) > (|z| -\delta/2)^2$. The same observation applies to $\xi Q(\xi)$ for $|\xi|>\delta/2$. Then,
\begin{align*}
\left|\mathbb{E} zQ(z) - \mathbb{E}\xi Q(\xi)\right| &\leq \left|\int_0^\infty \sum_{i=1}^C\left(\mathbb{P}\left(\max\{\frac{t}{i\delta},\, i\delta -\frac{\delta}{2}\} < z \leq i\delta +\frac{\delta}{2}\right) - \mathbb{P}\left(\max\{\frac{t}{i\delta},\, i\delta -\frac{\delta}{2}\} < \xi \leq i\delta +\frac{\delta}{2}\right)\right)dt\right|\\
& + \left|\int_0^\infty \sum_{i=1}^C\left(\mathbb{P}\left(-i\delta - \frac{\delta}{2} < z \leq \min\{-\frac{t}{i\delta},\,-i\delta +\frac{\delta}{2}\}\right) \right.\right.\\
& \left.\left.- \mathbb{P}\left(-i\delta -\frac{\delta}{2} < \xi\leq \min\{-\frac{t}{i\delta},\,-i\delta + \delta/2\}\right)\right)dt\right|\\
& + \left|\int_{0}^\infty \left[\mathbb{P}\left(|z|\geq C\delta +\delta/2\text{ and }(|z| -\frac{\delta}{2})^2 > t\right) - \mathbb{P}\left(|\xi|\geq C\delta + \delta/2\text{ and }(|\xi|-\frac{\delta}{2})^2  > t\right)\right]dt\right|.
\end{align*}
Note that for large values of $t$, $\frac{t}{i\delta} \geq i\delta +\frac{\delta}{2}$ and $-i\delta -\delta/2 \geq -\frac{t}{i\delta}$. In this case, the expression under the corresponding integral becomes zero. Then, using the triangle inequality, we conclude that
\begin{align*}
\left|\mathbb{E} zQ(z) - \mathbb{E}\xi Q(\xi)\right| &\leq \sum_{i=1}^C\int_0^{i(i+\frac{1}{2})\delta^2}\left|\mathbb{P}\left(\max\{\frac{t}{i\delta},\, i\delta -\frac{\delta}{2}\} < z \leq i\delta +\frac{\delta}{2}\right) - \mathbb{P}\left(\max\{\frac{t}{i\delta},\, i\delta -\frac{\delta}{2}\} < \xi \leq i\delta +\frac{\delta}{2}\right)\right|dt\\
& + \sum_{i=1}^C\int_0^{i(i+\frac{1}{2})\delta^2}\left|\mathbb{P}\left(-i\delta - \frac{\delta}{2} < z \leq \min\{-\frac{t}{i\delta},\,-i\delta +\frac{\delta}{2}\}\right) \right.\\
& \left.- \mathbb{P}\left(-i\delta -\frac{\delta}{2} < \xi\leq \min\{-\frac{t}{i\delta},\,-i\delta + \delta/2\}\right)\right|dt\\
& +\int_{0}^{C^2\delta^2} \left|\mathbb{P}\left(|z|\geq C\delta +\delta/2\right) - \mathbb{P}\left(|\xi|\geq C\delta + \delta/2\right)\right|dt\\
& +\int_{C^2\delta^2}^\infty \left|\mathbb{P}\left(|z|\geq \sqrt{t} +\delta/2\right) - \mathbb{P}\left(|\xi|\geq \sqrt{t} + \delta/2\right)\right|dt.
\end{align*}
The Berry-Esseen Theorem (Theorem \ref{thm:Berry-Esseen:theorem}) provides a quantitative bound on the difference of probabilities. For the setting above, the following corollary holds.
\begin{corollary}
\label{thm:Berry_Esseen:corollary}
Suppose that $\|x\|=1$, and let $X_i=e_{1i}x_i.$ Then, Theorem \ref{thm:Berry-Esseen:theorem} implies that for each $t\in\mathbb{R},$
$$\left|\mathbb{P}\left(\sum_{i=1}e_{1i}x_i  < t\right)-\mathbb{P}\left(\xi<t\right)\right|
\leq c_0 \sum_{i=1}^k |x_i|^3 = c_0 \|x\|_3^3.$$
\end{corollary}

Using Corollary \ref{thm:Berry_Esseen:corollary}, we get
\begin{align*}
\left|\mathbb{E} zQ(z) - \mathbb{E}\xi Q(\xi)\right| &\leq \sum_{i=1}^C\int_0^{i(i+\frac{1}{2})\delta^2}2c_0\|x\|_3^3dt + \sum_{i=1}^C\int_0^{i(i+\frac{1}{2})\delta^2}2c_0\|x\|_3^3dt\\
& + \int_{0}^{C^2\delta^2} c_0\|x\|^3_3dt\\
& +\int_{C^2\delta^2}^\infty \left|\mathbb{P}\left(|z|\geq \sqrt{t} +\delta/2\right) - \mathbb{P}\left(|\xi|\geq \sqrt{t} + \delta/2\right)\right|dt\\
&=4c_0\|x\|_3^3\left(\frac{C(C+1)(2C+1)}{6} + \frac{C(C+1)}{4}\right)\delta^2 + c_0\|x\|^3_3 C^2\delta^2\\
& +\int_{C^2\delta^2}^\infty \left|\mathbb{P}\left(|z|\geq \sqrt{t} +\delta/2\right) - \mathbb{P}\left(|\xi|\geq \sqrt{t} + \delta/2\right)\right|dt.\\
&=\frac{1}{3}c_0\|x\|_3^3 C(2C+1)(2C+5)\delta^2 +\int_{C^2\delta^2}^\infty \left|\mathbb{P}\left(|z|\geq \sqrt{t} +\delta/2\right) - \mathbb{P}\left(|\xi|\geq \sqrt{t} + \delta/2\right)\right|dt.\\
\end{align*}
Finally, we bound the integral using the fact that both $z$ and $\xi$ are sub-Gaussian random variables. Using the triangle inequality and that $\sqrt{t} +\delta/2 \geq \sqrt{t}$, the Hoeffding inequality (Theorem \ref{thm:Hoeffding}) and the Chernoff inequality, we get the following bound.
\begin{align*}
\int_{C^2\delta^2}^\infty \left|\mathbb{P}\left(|z|\geq \sqrt{t} +\delta/2\right) - \mathbb{P}\left(|\xi|\geq \sqrt{t} + \delta/2\right)\right|dt
&\leq \int_{C^2\delta^2}^\infty \mathbb{P}\left(|z|\geq \sqrt{t}\right)dt + \int_{C^2\delta^2}^\infty\mathbb{P}\left(|\xi|\geq \sqrt{t}\right)dt\\
&\leq \int_{C^2\delta^2}^\infty e\exp\left(-\frac{ct}{K^2}\right)dt + \int_{C^2\delta^2}^\infty 2\exp\left(-\frac{t}{2}\right)dt\\
& = eK^2\exp\left(-\frac{cC^2\delta^2}{K^2}\right) + 4\exp\left(-\frac{C^2\delta^2}{2}\right).\\
\end{align*}
Combining the bounds finishes the proof.
\qed
\subsection{Proof of Theorem \ref{thm:dithered_quantization:theorem}}
The proof closely resembles the proof of Theorem \ref{thm:MSQforFrameGeneral}. Expanding the reconstruction error, we get
\begin{align*}
\|x - x^{\text{dither}}\|&=\|x - E^\dagger Q(Ex+\tau)\|=\|(E^TE)^{-1}E^T\left(Ex - Q(Ex+\tau)\right)\\
& \leq \frac{1}{\sigma_{\min}^2(E)}\|E^T\left(F(Ex+\tau)-\tau\right)\|.
\end{align*}
Here $\sigma_{\min}$ denotes the smallest singular value of $E$, and $F:\mathbb{R}\mapsto(-\delta/2,\delta/2]$ is defined by $F(z) := z - Q(z)$. Recall that $e_1^T, e_2^T, ..., e_m^T$ denote rows of $E$, and assume that $\|e_{1}\|_{\psi_2}\leq K$. Then,
$$E^T\left(F(Ex+\tau)-\tau\right)=\sum_{i=1}^m e_i \left(F(e_1^Tx+\tau_i)-\tau_i\right).$$
Note that for $i\neq j$, $(e_i, \tau_i)$ is independent of $(e_j, \tau_j),$ which, in turn, implies that all terms in the sum are i.i.d. random vectors. We claim that $e_1(F(e_1^Tx+\tau_1)-\tau_1)$ is a sub-Gaussian random vector whose sub-Gaussian norm does not exceed $\delta K$. Indeed, note that the range of $F$ is bounded by $\delta/2$ in magnitude, and $\tau$ is bounded by $\delta/2$. Therefore, $|F(e_1^Tx+\tau_1)-\tau_1|\le \delta$, and 
$$\|e_1\left(F(e_1^Tx+\tau_1)-\tau_1\right)\|_{\psi_2}\leq\|e_1\|_{\psi_2}\delta\leq K\delta.$$

The sum of i.i.d. sub-Gaussian random variables can be bounded using the Hoeffding inequality (Theorem \ref{thm:Hoeffding}), and this result can be extended to random vectors by considering entry-wise bounds. Let us compute the mean of random variables.
\begin{align*}
\mathbb{E}E^T\left(F(Ex+\tau_1)-\tau_1\right)&= m\mathbb{E}e_1(F(e_1^T x+\tau_1)-\tau_1)\\
&= m\mathbb{E}e_1F(e_1^T x + \tau_1) - m\mathbb{E}e_1\tau_1\\
&= m\mathbb{E}_{e_1} e_1\mathbb{E}_{\tau_1} F(e_1^T x+\tau_1) - m \mathbb{E}_{e_1}e_1 \mathbb{E}_{\tau_1}\tau_1 = 0 - 0 = 0.\\
\end{align*}
Here, $\mathbb{E}_{e_1}$ and $\mathbb{E}_{\tau}$ stand for the expected values with respect to $e_1$ and $\tau$, respectively. In the last line, we used that $\mathbb{E}\tau_1 = 0$ and that for every $z$, $\mathbb{E}F(z+\tau_1)=0.$

The Hoeffding inequality (Theorem \ref{thm:Hoeffding}) implies that
$$\mathbb{P}\left(\left\|\frac{1}{m}E^T\left(F(Ex+\tau_1)-\tau_1\right)\right\|^2 > t^2\right)\leq ek\exp\left(-\frac{ct^2}{\delta^2}\right).$$
Combining this inequality with the bound on $\sigma_{\min}(A)$ provided by Theorem \ref{thm:matrixnormestGeneral}, we finish the proof.
\qed

\subsection{Proof of Theorem \ref{thm:cs_generalization}}
The proof is similar to the proof of Theorem B in  \cite{Sobolev_Duals_for_RF}. In this section, we only reproduce the main ideas of the proof.

\begin{proof}[Proof of Theorem \ref{thm:cs_generalization}]
Let $x\in\Sigma^N_k$ and let $T$ be the support of $x$. Without loss of generality, we assume that $\#T=k$. Denote the measurement matrix by $\Phi\in\mathbb{R}^{m\times N}$ whose rows are independent isotropic sub-Gaussian vectors whose sub-Gaussian norm does not exceed $K$. 

We use the two-stage reconstruction method. Accordingly, in Stage 1, we recover a coarse estimate $x^{\#}_{\rm{MSQ}}$of $x$ via
\begin{equation}
\label{eq:ell_1}
x^{\#}_{\rm{MSQ}}=\arg\min \|z\|_1\quad\quad \text{subject to } \|\Phi z-y\|\leq 0.5\delta\sqrt{m}.
\end{equation}
If $x$ is sufficiently sparse, $x^{\#}_{\rm{MSQ}}$ will be a good approximation of the original signal $x$. Indeed, if $\frac{1}{\sqrt{m}}\Phi$ satisfies the RIP of order $k$ with $\delta_{2k}<\sqrt{2}/2$, then \cite{CaiZhang2014}
$$\|x^{\#}_{\rm{MSQ}} - x\| \le C \delta.
$$
Recall that this is the case with overwhelming probability, for example, if $\Phi$ is an $m \times N$ sub-Gaussian matrix with independent isotropic rows whose $\psi_2$-norm does not exceed $K$ and $m\gtrsim_K k\log(N/k)$ -- see Corollary \ref{thm:cor:RIP_subGaussian}.

The coarse estimate $x^{\#}_{\rm{MSQ}} $ can be improved in the Stage 2 of the reconstruction. The key idea is that $T$, the support of $x$, can be recovered from $x^{\#}_{\rm{MSQ}}$ with high probability provided that all non-zero entries of $x$ exceed $\sqrt{2}C\delta$-- see \cite[Proposition 4.1]{Sobolev_Duals_for_RF}.

The next step is to recover the non-zero entries of $x$ using the support $T$, or, equivalently, to recover $x_T$. Note that $Q(y)=Q(\Phi x)=Q(\Phi_T x)$. Recall that rows of $\Phi$ are independent isotropic sub-Gaussian random vectors with sub-Gaussian norm at most $K$, and so are the rows of $\Phi_T$. Therefore, the conditions of Theorem \ref{thm:MSQforFrameGeneral} are satisfied for the frame $\Phi_T$.

In the setting of Theorem \ref{thm:MSQforFrameGeneral}, let $\alpha\in[0,1/2)$, $\lambda=m/k$, and let 
$c_2:=e^2\exp(-\lambda^{\alpha})$. Then, $c_1=C\sqrt{\ln(e^2c_2^{-1})}=C\lambda^{\alpha/2}$ and by 
Theorem \ref{thm:MSQforFrameGeneral}, for a given $x_T\in\mathbb{R}^k$, with probability exceeding ${1-2e^{-c_3m}-e^2\exp(-\lambda^{\alpha})}$, we have
\begin{equation}
\label{eq:MSQ_frame_to_CS}
\|\Phi_T^\dagger Q(\Phi_Tx_T)- x_T\|<A\left(2\|x\|\exp\left(-\frac{2\pi^2\|x\|^2}{\delta^2}\right)+c_7\sqrt{\log k}\lambda^{-1/2+\alpha/2}\delta\right),
\end{equation}
where $A=(1/2-c_K\lambda^{-1/2})^{-2}.$

To guarantee the fine recovery in the stage 2 of reconstruction, each $m\times k$ submatrix of $\Phi$ must satisfy the conditions of Theorem \ref{thm:MSQforFrameGeneral} which would imply the validity of \eqref{eq:MSQ_frame_to_CS}. In other words, we need to show that the rows of $\Phi_T$ are independent isotropic sub-Gaussian random vectors for all $\#T\leq k$. Recall that the rows of $m\times N$ matrix $\Phi$ are independent sub-Gaussian isotropic random vectors. Since $k$ rows of matrix $\Phi$ are chosen based on the support of the signal $x$, these rows are independent. Moreover, these rows remain isotropic and sub-Gaussian. Therefore, for each fixed choice of $k$ rows, \eqref{eq:MSQ_frame_to_CS} holds with probability at least $1-2\exp(-c_3m)-e^2e^{-\lambda^\alpha}.$ Finally, applying the result of Theorem \ref{thm:MSQforFrameGeneral} finishes the proof. 
\end{proof}

\subsection{Proof of Theorem \ref{thm:PBP_MSQ_bound:theorem}}
The proof loosely follows the spirit of \cite{xu2019quantized}. Let $S$ be the support of $x^{\text{PBP}}$ and $T$ be the support of $x$. Applying the triangle inequality, we get 
\begin{align*}
\|x^{\text{PBP}} - x\|& =\|(\frac{1}{m}\Phi^T Q(\Phi x))_S - x\|\\
& \leq \|(\frac{1}{m}\Phi^T Q(\Phi x))_S - (\frac{1}{m}\Phi^T \Phi x)_S\| + \|(\frac{1}{m}\Phi^T \Phi x)_S-(\frac{1}{m}\Phi^T \Phi x)_{S\cup T}\| + \|(\frac{1}{m}\Phi^T \Phi x)_{S\cup T} - x\|\\
&= \|(\frac{1}{m}\Phi^T Q(\Phi x))_S - (\frac{1}{m}\Phi^T \Phi x)_{S}\| + \|(\frac{1}{m}\Phi^T \Phi x)_{(S^c\cap T)}\| + \|(\frac{1}{m}\Phi^T \Phi x - x)_{S\cup T}\|\\
\end{align*}
Recall that $|S| =|T| = k$. Since $S$ is the index set of the $k$ largest entries of $\frac{1}{m}\Phi^T F(\Phi x)$ and $|S^c\cap T| = |S\cap T^c|$ we get the following bound:
\begin{align*}
\|x^{\text{PBP}} - x\|& \leq \|(\frac{1}{m}\Phi^T Q(\Phi x))_S - (\frac{1}{m}\Phi^T \Phi x)_{S}\| + \|(\frac{1}{m}\Phi^T \Phi x)_{(S\cap T^c)}\| + \|(\frac{1}{m}\Phi^T \Phi x - x)_{S\cup T}\|\\
&\leq \|(\frac{1}{m}\Phi^T F(\Phi x))_S\| + 2\|(\frac{1}{m}\Phi^T \Phi x - x)_{S\cup T}\|.\\
\end{align*}
Let $x_T\in\mathbb{R}^k$ be the vector that consists of non-zero elements of $x$. Note that the product $\Phi x$ can be considered as a linear combination of columns of $\Phi$, and only columns whose indexes are in $T$ are multiplied by a non-zero number. Therefore, $\Phi x = \Phi_T x_T$, where $\Phi_T$ is a submatrix of $\Phi$ that keeps only columns whose index is in $T$.

Applying the triangle inequality, we get
\begin{align*}
\|x^{\text{PBP}} - x\|&\leq\frac{1}{m}\|\left(\Phi^T F(\Phi x)\right)_S\| + 2\|(\frac{1}{m}\Phi^T \Phi x - x)_{S\cup T}\|\\
&\leq \frac{1}{m}\|\left(\Phi^T F(\Phi_T x_T)\right)_{S\cap T}\| +\frac{1}{m}\|\left(\Phi^T F(\Phi_T x_T)\right)_{S\cap T^c}\| + 2\|(\frac{1}{m}\Phi^T \Phi x - x)_{S\cup T}\|\\
&\leq \frac{1}{m}\|\Phi_{T}^T F(\Phi_T x_T)\| +\frac{1}{m}\|\Phi_{S\cap T^c}^T F(\Phi_T x_T)\| + 2\|(\frac{1}{m}\Phi^T \Phi x - x)_{S\cup T}\|.\\
\end{align*}
In the last line, we used that if we increase the number of entries from $|S\cap T|$ to $|T|$, the norm can only increase.

Let us bound each term individually. Note that rows of $\Phi_T$ keep all properties of rows of $\Phi$: they are independent isotropic sub-Gaussian with the same sub-Gaussian norm. Therefore, we can use Theorem \ref{thm:MSQforFrameGeneral} or Theorem \ref{thm:MSQForFrame} to estimate the first term.

We proceed to bounding the term $\frac{1}{m}\|\Phi_{S\cap T^c}^T F(\Phi_T x_T)\|$. Let $\Phi=(\phi)_{i,j}.$ For each $i\in S\cap T^c$, consider the $i$th entry of the vector $\Phi_{S\cap T^c}^T F(\Phi_T x_T)$:
$$\frac{1}{m}\left(\Phi_{S\cap T^c}^T F(\Phi_T x_T)\right)_i = \frac{1}{m}\sum_{j=1}^m \phi_{ji}F(\sum_{s\in T} \phi_{js}x_s).$$
Since $\Phi$ has independent rows, the expression above is the sum of independent random variables. Let us investigate each random variable. Since $S\cap T^c$ and $T$ do not intersect and the matrix $\Phi$ has independent rows, $\Phi_{S\cap T^c}^T$ and $\Phi_T$ are independent. Therefore, $\phi_{ji}$ and $F(\sum_{s\in T} \phi_{js}x_s)$ are independent. We conclude that $\mathbb{E}\phi_{ji}F(\sum_{s\in T} \phi_{js}x_s)=0$. Since $F(\Phi_T x_T)$ is bounded by $\delta/2$, the random variable $\phi_{ji}F(\sum_{s\in T} \phi_{js}x_s)$ is a sub-Gaussian random variable with mean 0 and sub-Gaussian norm at most $K\delta/2$. We can apply the Hoeffding inequality (Theorem \ref{thm:Hoeffding}) for bounding the sum of random variables. Taking the union bound for each entry and using $|S\cap T^c|\leq k$, we conclude
$$\|\frac{1}{m}\sum_{j=1}^m \phi_{ji}F(\sum_{s\in T} \phi_{js}x_s)\|\leq  C\sqrt{\log(k)}\lambda^{-1/2}\delta$$
with probability at least $1-e\exp(-C').$

The last term was bounded in Theorem \ref{thm:PBP_classical_result:theorem}. Combining all three bounds yields the statement of the theorem.

\section{Numerical experiments.}
\label{sec:numerical_experiments}
\begin{figure}[t]
  \centering
\begin{subfigure}{0.3\textwidth}
\includegraphics[height=0.2\textheight]{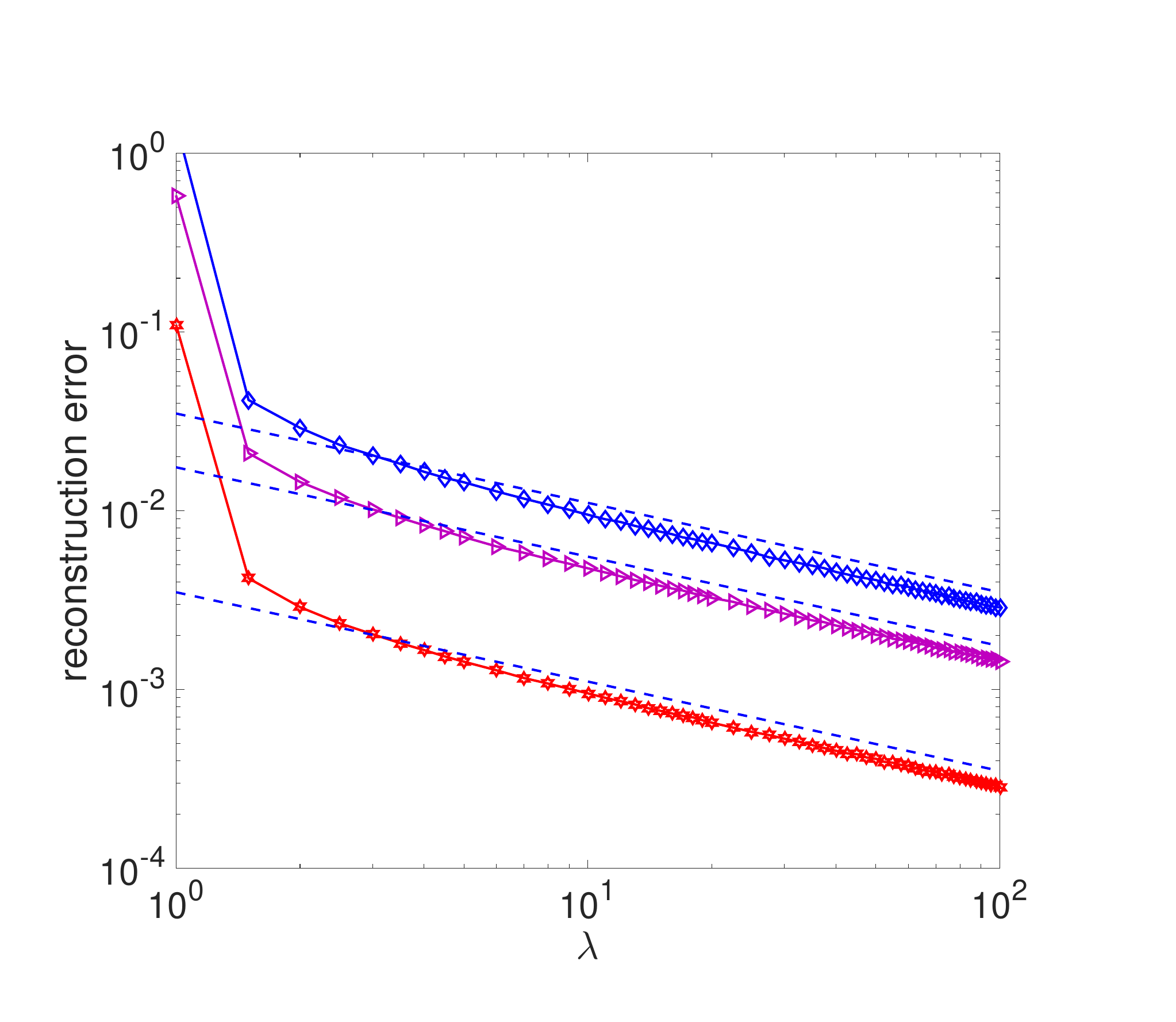} 
\vspace{-0.4cm}
\caption{ \label{fig:MSQ_WHN_Gaussian}}
\end{subfigure}
\hspace{5mm}
\begin{subfigure}{0.3\textwidth}
\includegraphics[height=0.2\textheight]{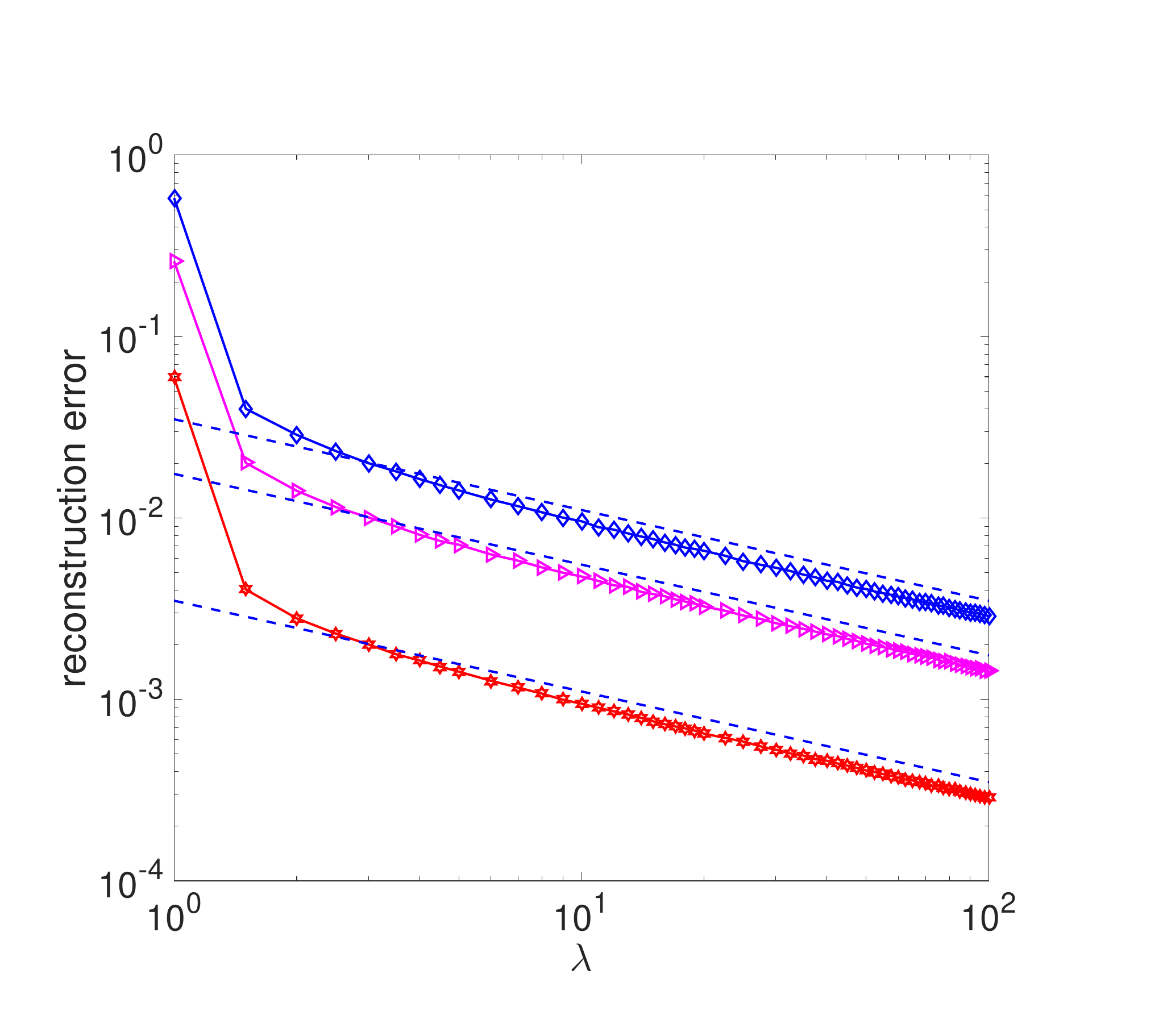}
\vspace{-0.4cm}
\caption{ \label{fig:MSQ_WNH_Bernoulli}}
\end{subfigure}
\hspace{4mm}
\begin{subfigure}{0.3\textwidth}
\includegraphics[height=0.2\textheight]{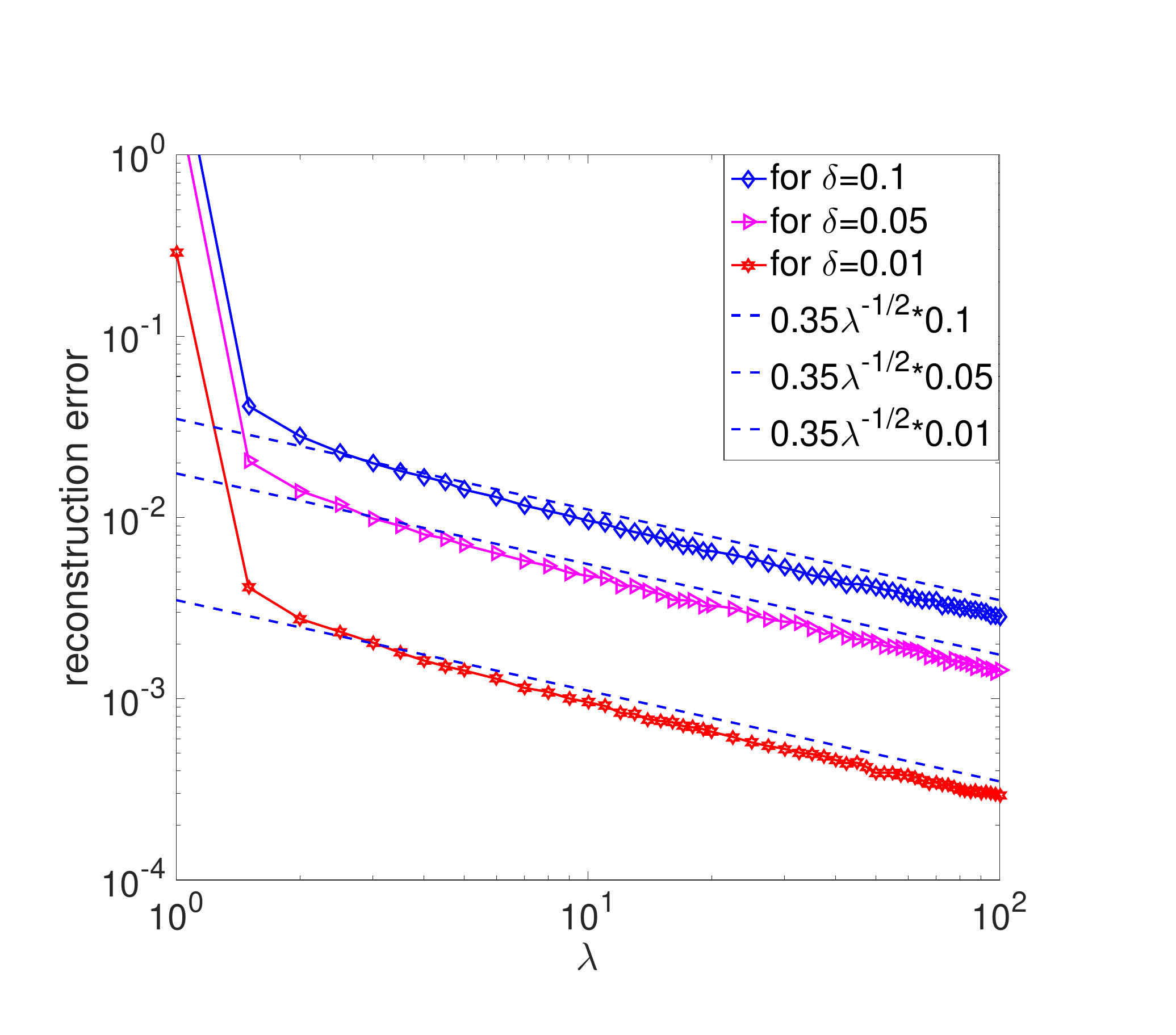}
\vspace{-0.4cm}
\caption{ \label{fig:MSQ_WNH_spherical}}
\end{subfigure}
  \caption{\label{fig:exp1_all} The numerical behavior (in log-log scale) of the mean of the reconstruction error, as a function of $\lambda=m/k$, for a fixed unit-norm signal $x$ and an $m\times k$ random matrix $E$ with i.i.d. standard Gaussian entries (A), with i.i.d Bernoulli entries (B), and with independent rows, uniformly distributed on the sphere of radius $\sqrt{k}$ (C). For each $\delta=0.1,\, 0.05,\, 0.01$, we draw 1000 realizations of the random matrix$E$, compute the quantized frame coefficients of $x$, and reconstruct using $E^\dagger$. We plot the average reconstruction error for each $\delta$.}
 \end{figure}

\noindent\textbf{Experiment 1. MSQ for frame theory.} This first set of experiments illustrates the error decay for the MSQ quantizer (with various quantizer stepsizes $\delta$)  for various classes of measurement matrices. 
To that end, we fix a unit-norm signal $x\in\mathbb{R}^k$ where $k=20$ and consider $m\times k$ random frames $E$, where $m$ varies between 20 and 2000 (i.e., $\lambda \in[1,100]$). For each $m$, we pick 1000 realizations of $E$ and calculate the reconstruction error $\mathcal{E}(x)=\|x-E^\dagger(Q^{\text{MSQ}}_\delta(Ex))\|$. We perform this for three distinct values of the quantizer resolution $\delta$, specifically $\delta\in \{0.01, 0.05, 0.1\}$. For each $\delta$, we report the average value of $\mathcal{E}(x)$ over the 1000 realizations of $E$. Figure \ref{fig:exp1_all} shows the outcomes in three different scenarios: when $E$ is a Gaussian matrix (A), a Bernoulli matrix (B), and when the rows of $E$ are drawn independently from the uniform distribution on the sphere with radius $k^{1/2}$ (C). The observed error decay rates seem to be consistent with m-WNH, which predicts a decay like $\delta \lambda^{-1/2}$, as well as with Theorem \ref{thm:MSQforFrameGeneral}, which shows that the error would behave like $C_1+C_2 \delta \lambda^{-1/2}$, where $C_1$ is a generally non-zero, but possibly very small constant, at least in the case of Gaussian random matrices. So, assuming that $C_1$ is small for the other random ensembles as well, the error in these experiments is far from saturating at the level of $C_1$. Next, we will design a numerical experiment that allows us to observe $C_1$.

\noindent\textbf{Experiment 2. Lower bound for the decay rate.} This experiment aims to provide numerical illustration of the result of Theorem \ref{thm:MSQforFrameGeneral}. Consider a deterministic unit-norm signal $x$ and random frame $E$ that satisfy the conditions of Theorem \ref{thm:MSQforFrameGeneral}. Then, Theorem \ref{thm:MSQforFrameGeneral} states that the reconstruction error $\mathcal{E}(x)$ satisfies 
$$A'\left(\mu-c_1K\sqrt{k}\lambda^{-1/2}\delta \right) \le \mathcal{E}(x)\leq A\left(\mu+c_1K\sqrt{k}\lambda^{-1/2}\delta \right)$$
with probability at least $1-c_2-2\exp(-c_3m)$ where $\mu=\frac{1}{m}\| \mathbb{E}E^T(Ex-Q(Ex))\|$, $A$ and $A'$ are some positive constants, $c_2\in(0,1)$, and $c_1=c_1(c_2)$ as in Theorem \ref{thm:MSQforFrameGeneral}. According to Proposition \ref{prop:constant_term_is_constant}, we expect the term $\mu$ to be of order $O(1)$ as $m\rightarrow\infty.$ In particular, if $\mu$ is not zero, the reconstruction error should tend to a value between $A'\mu$ and $A\mu$ as $\lambda=m/k\rightarrow\infty$. Note that this does not contradict with the outcomes of Experiment 1. For example, in the Gaussian case, in Theorem \ref{thm:estmean} we provide a rigorous estimate of the value of $\mu$, which is extremely small when $\delta<1$. Therefore, in the experiments with Gaussian frames, we can only observe the term $Ac_1K\sqrt{k}\lambda^{-1/2}\delta$ for the given range of $\lambda$.

In order to actually observe the influence of the constant term, we now set $\delta:=4$ (thus creating an artificial set-up) and repeat the Experiment 1 for Gaussian matrices, Bernoulli matrices, and matrices whose rows are randomly drawn from the uniform distribution on the sphere, with $\lambda \in [10,1000]$ this time. The outcomes are shown in Figure \ref{fig:art_all}. Specifically, in Figure \ref{fig:ConstanttermMSQ}, we observe that when $E$ is Gaussian, the error settles down to a value between ${2\|x\|(\exp(-2\pi^2\|x\|^2/\delta^2)-\exp(-8\pi^2\|x\|^2/\delta^2))}$ and ${2\|x\|\exp(-2\pi^2\|x\|^2/\delta^2)}$, as predicted. In Figure \ref{fig:ConstanttermMSQ_Bernoulli}, we observe that the behaviour of the error in the Bernoulli case is similar in that it settles to a value that is comparable with the Gaussian case. While we do not have a sharp estimate for the bounds in this case, the experiment suggests that, like the Gaussian case, $\mu$ is a non-zero, but small, constant when $E$ is a Bernoulli matrix.  Finally, Figure \ref{fig:ConstanttermMSQ_spherical} shows that a similar conclusion can be drawn for the case when $E$ is with random rows, drawn independently from the uniform distribution on the sphere with radius $k^{1/2}$.

\begin{figure}[t]
  \centering
\begin{subfigure}{0.3\textwidth}
\includegraphics[height=0.2\textheight]{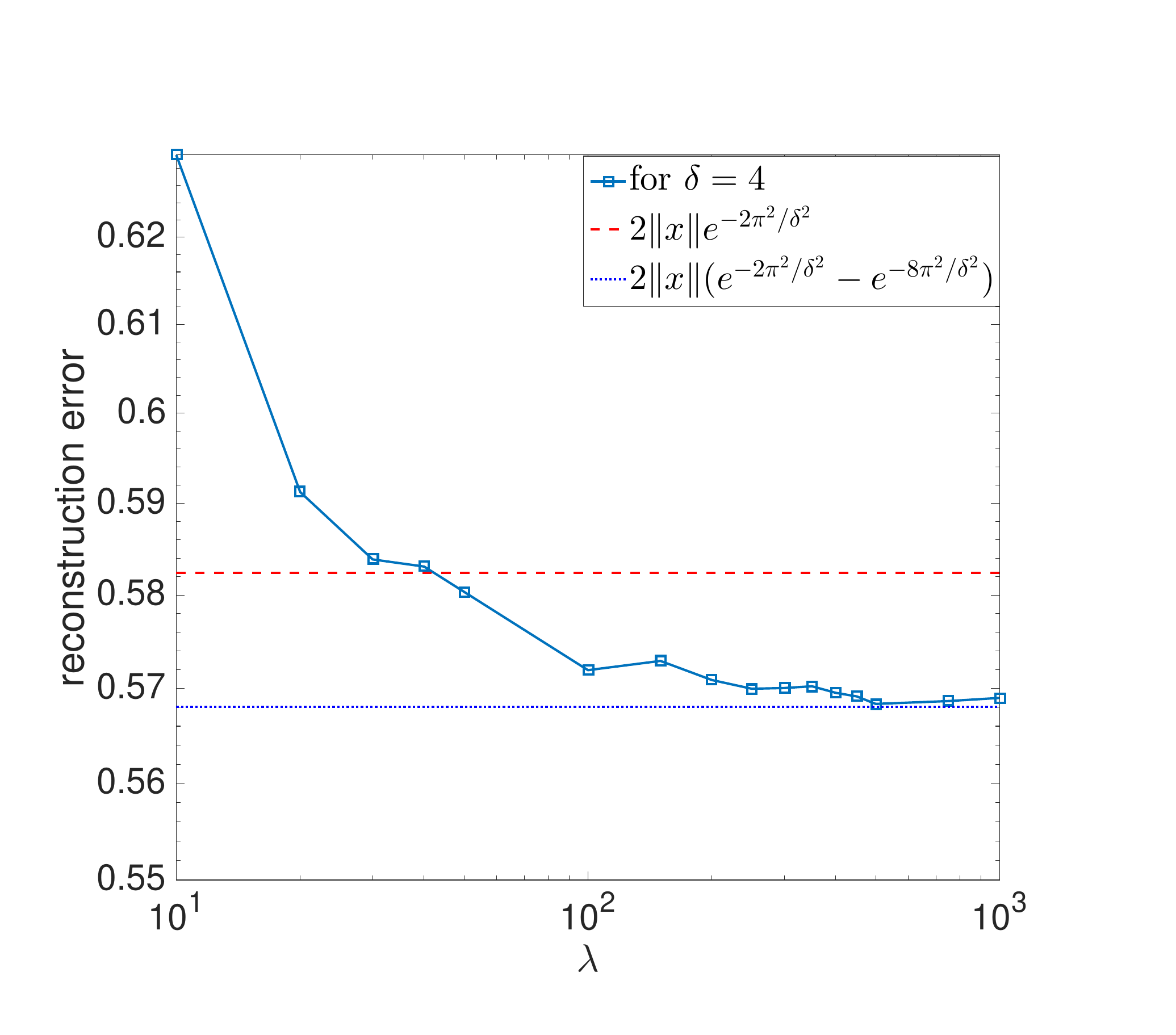} 
\vspace{-0.4cm}
\caption{ \label{fig:ConstanttermMSQ}}
\end{subfigure}
\hspace{5mm}
\begin{subfigure}{0.3\textwidth}
\includegraphics[height=0.2\textheight]{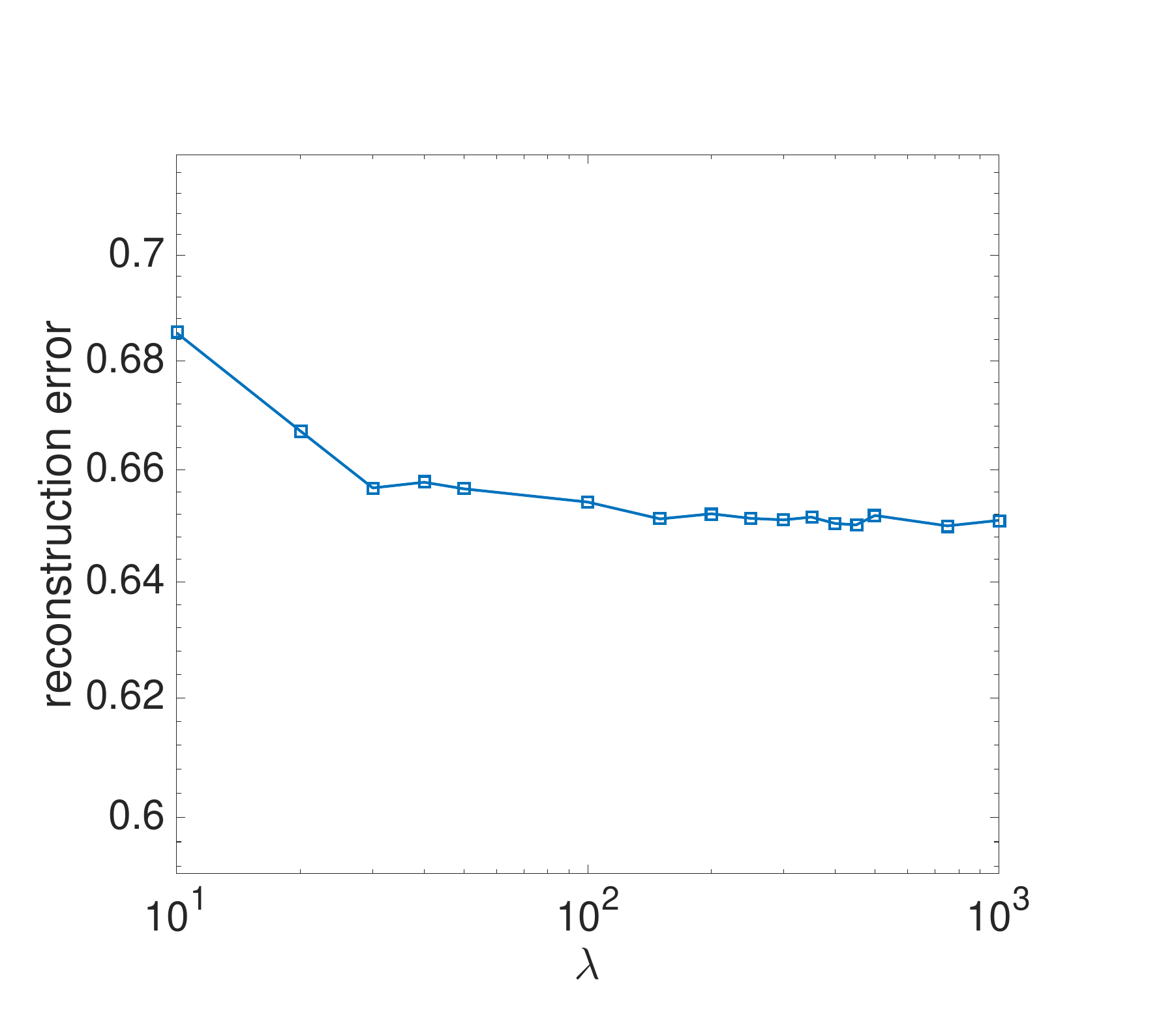}
\vspace{-0.4cm}
\caption{ \label{fig:ConstanttermMSQ_Bernoulli}}
\end{subfigure}
\hspace{4mm}
\begin{subfigure}{0.3\textwidth}
\includegraphics[height=0.2\textheight]{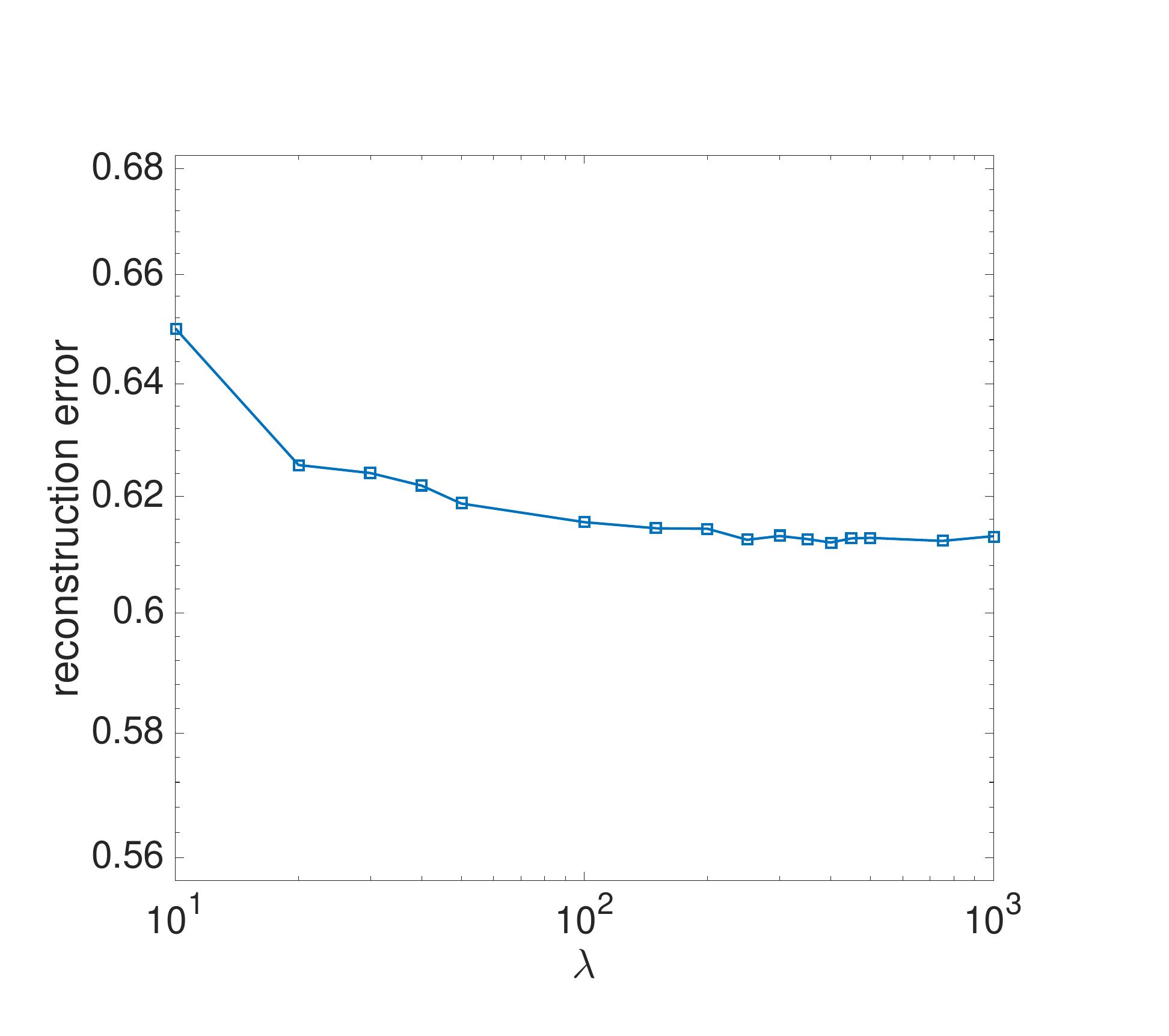}
\vspace{-0.4cm}
\caption{\label{fig:ConstanttermMSQ_spherical}}
\end{subfigure}
  \caption{\label{fig:art_all} The numerical behaviour of the reconstruction error in the setting identical to that described in Figure \ref{fig:exp1_all}; this time $\delta=4$ and $\lambda \in [10,1000]$. The results shown are the outcomes for Gaussian matrices (A), Bernoulli matrices (B), and matrices whose rows are randomly drawn from the uniform distribution on the sphere (C).   
      }
 \end{figure}

Finally, we consider random submatrices of the Discrete Fourier Transform (DFT) matrix whose rows and columns are drawn in the following way: We pick the first $k$ columns of the $N\times N$ Fourier matrix, with $N=100000$. Note that this results in a harmonic frame for $\C^k$. Next, we fix $m<N$; we randomly draw $m$ rows of this harmonic frame, resulting in an $m \times k$ random frame $E$. As above, for each $m$, we pick 1000 realizations of $E$ and calculate the average of the reconstruction error $\mathcal{E}(x)=\|x-E^\dagger(Q^{\text{MSQ}}_\delta(Ex))\|$ with $\delta=0.01$ over these realizations. The results are reported in Figure \ref{fig:MSQ_Fourier} which suggest that the average reconstruction error approaches a constant approximately equal to $\delta/2$. Note that this is qualitatively different from the other random matrix ensembles we considered in Experiment 1 in that we observe the constant term in the error bounds in a realistic setting (with $\delta=0.01$) rather than the artificial setting with $\delta=4$ above. 

\begin{figure}[t]
	\centerline{
		\includegraphics[height=0.18\textheight]{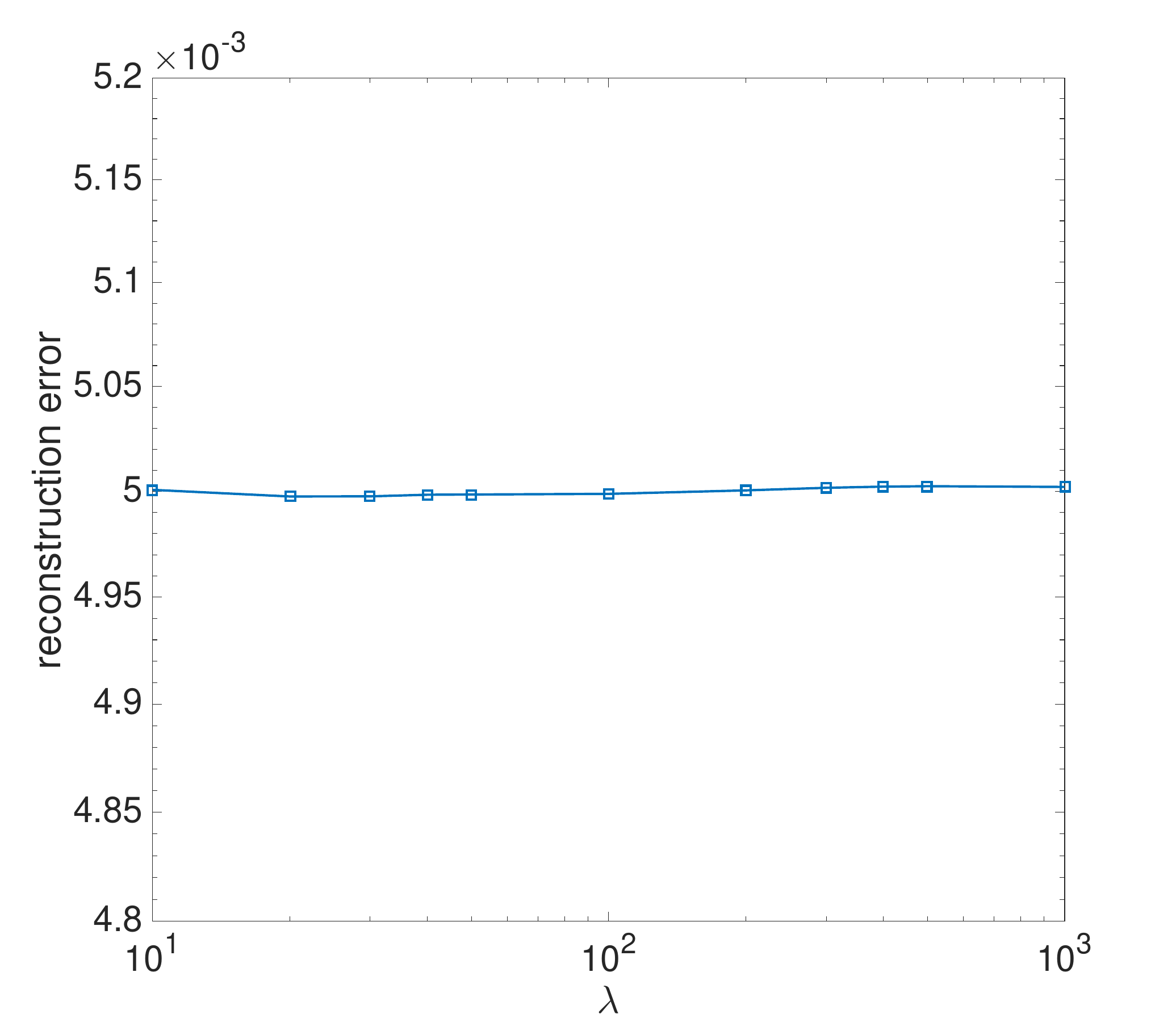}}
	\caption{The numerical behaviour of the reconstruction error in the setting identical to that described in Figure \ref{fig:exp1_all}; this time $E$ is an $m\times k$ random Fourier matrix obtained by restricting the $N\times N$ DFT matrix (with $N=100000$) to its first $k=20$ columns and then selecting $m$ random rows. Here $\delta=0.01$ and $\lambda \in [10,1000]$.}
	\label{fig:MSQ_Fourier}
\end{figure}

\textbf{Experiment 3. Compressed sensing setting.}
This experiments focuses on CS and is designed to observe the error decay shown in Theorem \ref{thm:cs_generalization} when the 2-stage reconstruction scheme described in Section \ref{sec:result_in_CS} is used to reconstruct a sparse $x$ from its quantized (by MSQ) compressed measurements. Let the signal $x\in\Sigma^{N}_{k}$, where $N=1000$ and $k=20$ and let $\Phi$ be $m\times N$. We will quantize compressed measurements of $x$, given by $\Phi x$, using MSQ. Here is a list of various parameters in our experiments: $\Phi$ will be Gaussian (Figure \ref{fig:MSQforCS}), Bernoulli (Figure \ref{fig:MSQforCS_Bernoulli}), or a random matrix whose rows are drawn i.i.d. from the sphere of radius $\sqrt{N}.$ (Figure \ref{fig:MSQforCS_spherical}). We will vary $m$ between 100 and 500, corresponding to $\lambda\in [5,25]$. The quantization stepsize is $\delta \in \{0.01,0.05,0.1\}$. Finally, to ensure the condition of Theorem \ref{thm:cs_generalization-gaussian} for successful support recovery is satisfied, we choose the non-zero entries of $x$ randomly from $\{\pm 1/\sqrt{k}\}$. It turns out that this condition is not satisfied when $\Phi$ is Bernoulli and $\delta=0.1$, so in the Bernoulli case we restrict our experiments to $\delta=0.01$ and $\delta=0.05$. The outcomes are given in Figure \ref{fig:cs_all}, where we observe that, in each case, the decay of the reconstruction error agrees with the result of Theorem \ref{thm:cs_generalization}.

\begin{figure}[h]
  \centering
\begin{subfigure}{0.3\textwidth}
\includegraphics[height=0.18\textheight]{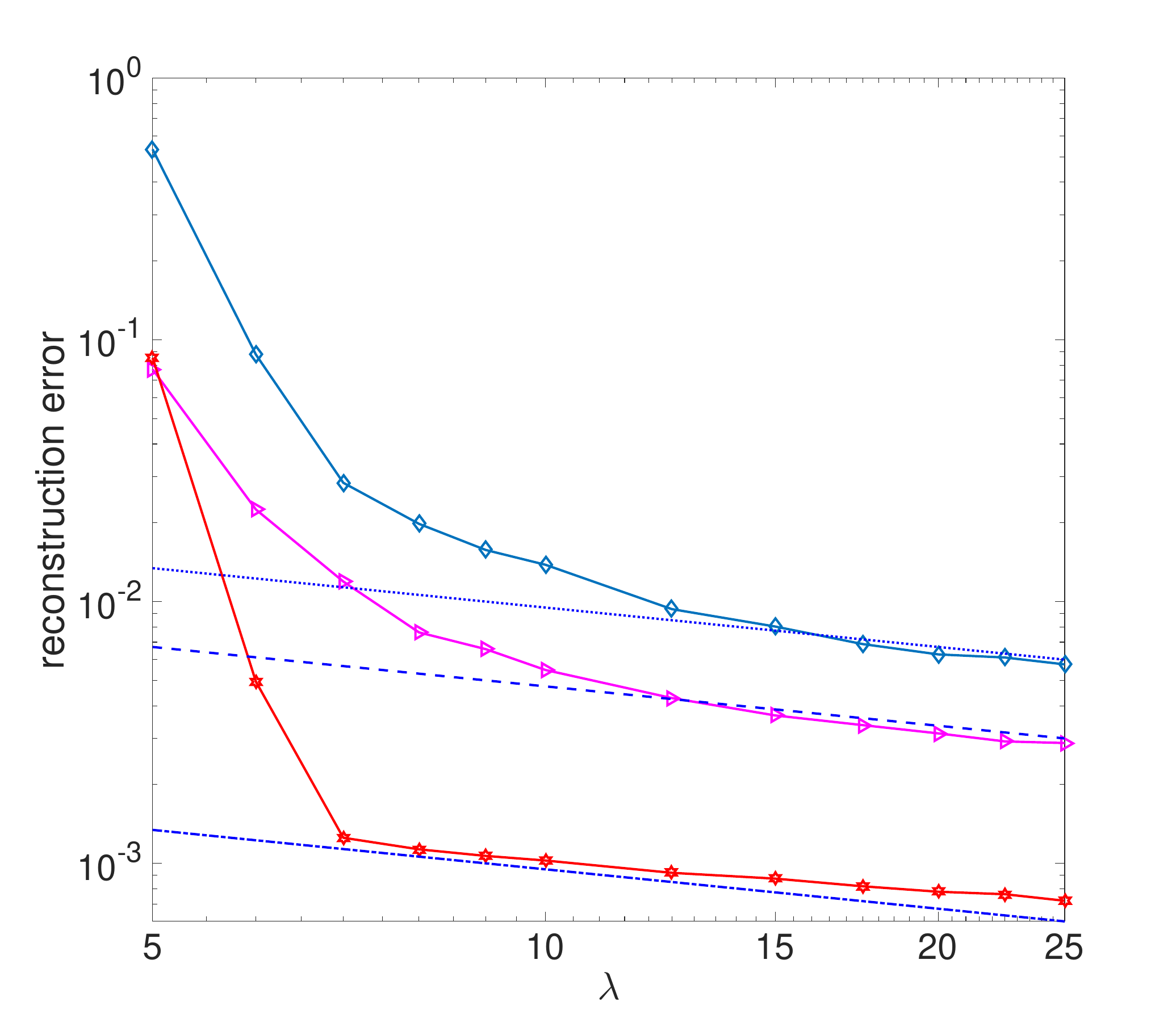} 
\vspace{-0.4cm}
\caption{ \label{fig:MSQforCS}}
\end{subfigure}
\hspace{5mm}
\begin{subfigure}{0.3\textwidth}
\includegraphics[height=0.18\textheight]{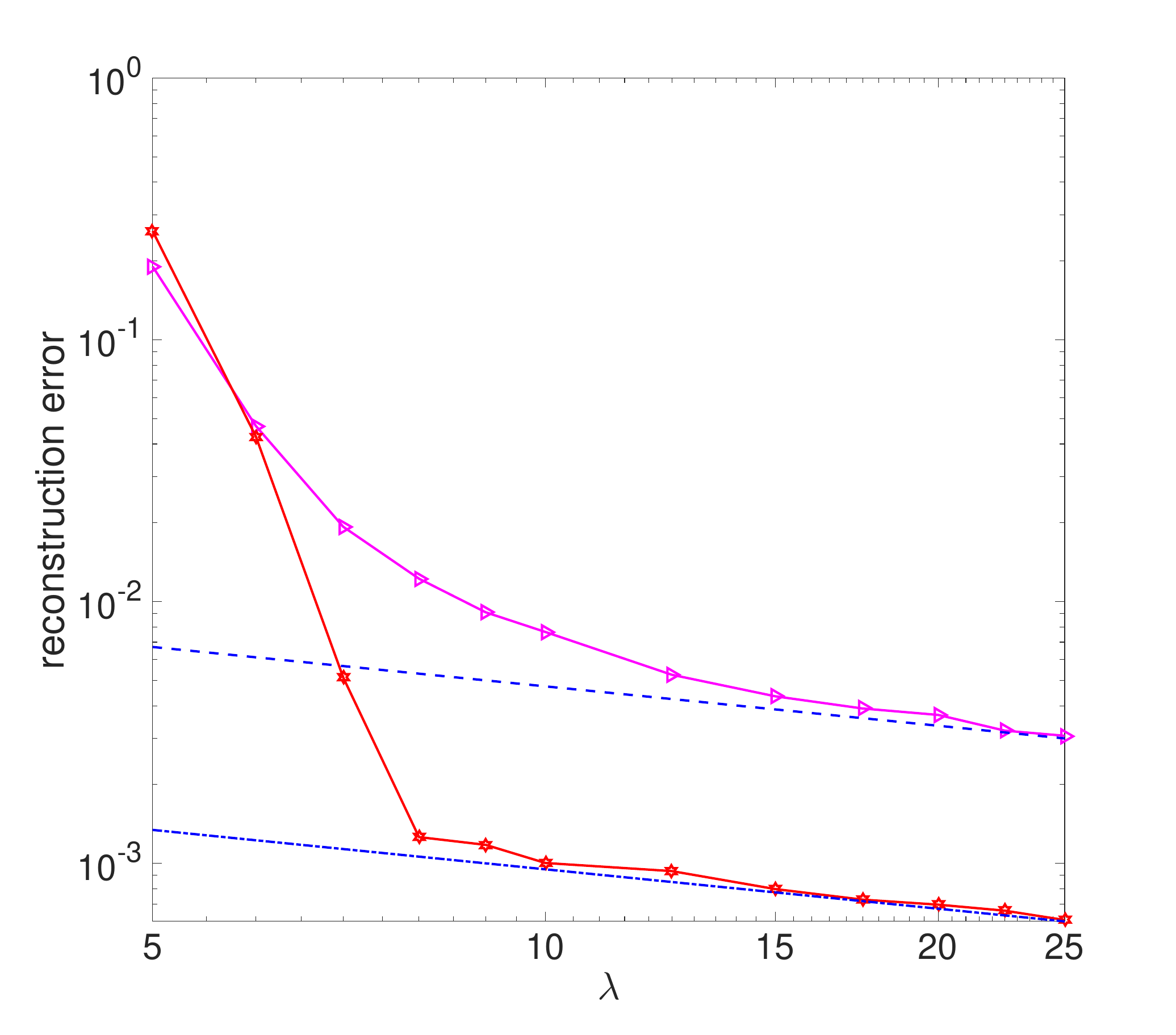}
\vspace{-0.4cm}
\caption{ \label{fig:MSQforCS_Bernoulli}}
\end{subfigure}
\hspace{4mm}
\begin{subfigure}{0.3\textwidth}
\includegraphics[height=0.18\textheight]{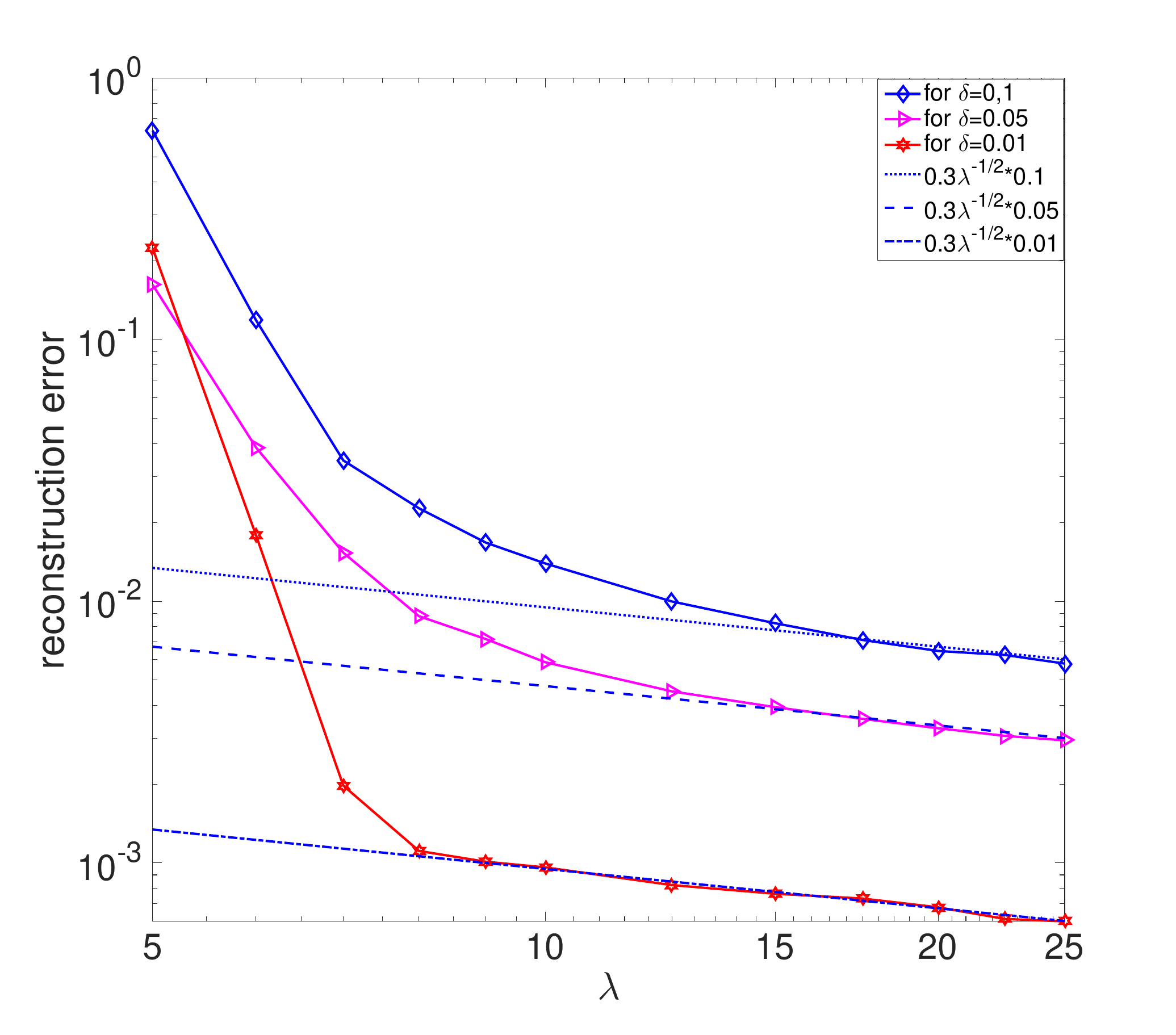}
\vspace{-0.4cm}
\caption{\label{fig:MSQforCS_spherical}}
\end{subfigure}
  \caption{\label{fig:cs_all} The numerical behaviour of the reconstruction error in the CS setting. The results shown are the outcomes for Gaussian measurement matrices (a), Bernoulli measurement matrices (b), and matrices whose rows are randomly drawn from the uniform distribution on the sphere (c).  Here $x\in\Sigma^{N}_{k}$, where $N=1000$ and $k=20$, the measurement matrix $\Phi$ is $m\times N$, where $m$ varies between 100 and 500, corresponding to $\lambda$ varying between 5 and 25. 
      }
 \end{figure}

\section{Acknowledgments} This work was funded in part by a UBC 4YF Fellowship (KM), an NSERC Discovery Accelerator Award (OY), and an NSERC Discovery Grant (OY).

\bibliographystyle{plain}
\bibliography{quantization,HA,CS,FrameTheory}

\end{document}